\newtheorem{thm}{Theorem}[section]
\newtheorem{lem}[thm]{Lemma}
\newtheorem{prop}[thm]{Proposition}
\theoremstyle{definition}
\theoremstyle{remark}
\newtheorem{rem}[thm]{Remark}
\numberwithin{equation}{section}
\newcommand{\Real}{\mathbb{R}}
\newcommand{\med}{\textnormal{med}}
\begin{document}
	\title[Control and Stabilization for the DGB equation]{Control and Stabilization for the dispersion generalized  Benjamin equation on the Circle}

	\author{Francisco J. Vielma Leal}\address{IMECC-UNICAMP, Rua Sérgio Buarque de Holanda, 651
		13083-859, Campinas, SP, Brasil}
	\email{fvielmaleal7@gmail.com; vielma@ime.unicamp.br}
	
		\author{Ademir Pastor }\address{IMECC-UNICAMP, Rua Sérgio Buarque de Holanda, 651, 13083-859, Campinas, SP, Brasil}
		\email{ apastor@ime.unicamp.br}

\subjclass{93B05, 93D15, 93D23, 35Q53}
\keywords{Dispersive equations, Dispersion generalized Benjamin equation, Controllability, Well-posedness, Stabilization}

\begin{abstract} 
	This paper is concerned with controllability and stabilization properties of the dispersion generalized Benjamin equation on the periodic domain $\mathbb{T}.$ First, by assuming the control input  acts on all the domain, the system is proved to be globally exactly controllable in the Sobolev space $H^{s}_{p}(\mathbb{T}),$ with $s\geq 0.$ Second, by providing a locally-damped term added to the equation as a feedback law, it is shown that the resulting equation is globally well-posed and locally exponentially stabilizable in the  space $L^{2}_{p}(\mathbb{T}).$  The main ingredient to prove the global well-posedness is the introduction of  the dissipation-normalized Bourgain  spaces which allows one to gain smoothing properties simultaneously from the dissipation and  dispersion present in the equation. Finally,  the local exponential stabilizability result is accomplished taking into account the decay of the associated semigroup combined with the fixed point argument.   
\end{abstract}

\maketitle

\section{Introduction}\label{intr}
We consider the following dispersion generalized Benjamin (DGB) equation 
\begin{equation}\label{GB}
	\partial_{t}u+\beta D^{2m}\partial_{x}u+\alpha \mathcal{H}^{2r}\partial_{x}u+\partial_{x}(u^2)=0,
\end{equation}
where $u=u(x,t)$ denotes a real-valued function of the real variables $x$ and $t$, the constants  $\alpha$ and $\beta$ are non-negative, and the Fourier multiplier operators $D^{2m}$ and $\mathcal{H}^{2r}$ are defined by $\widehat{D^{2m}u}(\xi):=|\xi|^{2m}\widehat{u}(\xi)$ and $\widehat{\mathcal{H}^{2r}u}(\xi):=-|\xi|^{2r}\widehat{u}(\xi),$ respectively. Here $m$ and $r$ are real constants with $0<r<m.$ 

Equation \eqref{GB} encompass several well known dispersive models. In fact, if we set $m=1$ and $\alpha=0$ we obtain the Korteweg-de Vries equation (KdV), which was introduced in \cite{kdv}  as an approximate model for planar, unidirectional, irrotational waves propagating on the surface of shallow water. If we set  $r=\frac{1}{2}$ and $\beta =0$ then the equation reduces to the Benjamin-Ono equation (BO) which was deduced in \cite{Benjamin0} and \cite{ono} to model  one-dimensional internal waves in deep water. Also, in the case $m=1$ and $r=\frac{1}{2}$ we obtain  the Benjamin equation, which was derived in \cite{Benjamin} to describe the evolution of waves on the interface of a two-layer system of fluids  in which surface tension effects are not negligible. Thus, equation \eqref{GB} can be thought as an hybrid model between the KdV  and BO equations in the sense that the second and third terms  are dispersion generalizations of the KdV and BO, respectively.

Both KdV and BO equations are one of the most studied dispersive models; several results concerning well-posedness and the behavior of solutions may be found in the current literature. Since the references are too extensive, we refrain from list them at this point and focus our attention into the DGB equation. The generalized version of \eqref{GB},  specifically when the nonlinearity is replaced by $\partial_{x}(u^{p+1})$ for some integer number $p\geq1,$  has been object of study in recent years. 
In \cite{Linares and Scialom}, the authors considered the initial-valued problem (IVP) associated to that equation to understanding the interaction between the dispersive term and the nonlinearity in the context of the theory of nonlinear dispersive evolution equations. In particular, they proved that this equation with $m>1$ and $\beta \neq 0$ is globally well-posed in its energy space which is the natural space to study properties of the solitary-wave solutions. Comparison between the solutions of \eqref{GB} and that of the generalized BO equation ($\beta=0$) as well as the behavior of the solutions in the limiting case ($\alpha\to0$) were also addressed.  In \cite{Bona Chen} the authors indeed proved the existence of solitary-wave solutions; they showed that solitary waves do exist for any $p\geq1$ and $\alpha$ small (with $\beta\neq0$). Asymptotic and decay properties of the solitary waves were also established. For the generalized Benjamin equation ($m=1$ and $r=\frac{1}{2}$), existence and stability of solitary waves whether $p=1$ were established in \cite{Benjamin} and \cite{Benjamin1} in the case of higher velocities. Later, using the known results for the KdV equation, in \cite{Albert} it was proved the existence and stability of solitary waves for $\alpha$ sufficiently small. In addition, the stability (for $1<p<4$) and instability (for $p>5$)  of solitary waves were addressed in \cite{Albert1} and \cite{Angulo}, respectively. We also recall that global well-posedness for the Benjamin equation in $L^2(\mathbb{R})$ was established in \cite{Linares}. These global result was sharpened to $H^s(\mathbb{R})$, $s\geq-3/4$ in \cite{Chen}. In the periodic context, global well-posedness in $L^2(\mathbb{T})$ was also established in \cite{Linares}. More recently, in \cite{Shi} the authors proved the (sharp) local well-posedness in the Sobolev spaces $H^s(\mathbb{T})$, $\geq-1/2$, for small data. Also, existence and stability of periodic traveling-wave solutions were obtained in \cite{Angulo1}.

\subsection{Problems under consideration.}\label{intr1}	

In this paper, we are interested in studying the controllability and stabilization problems associated to the DGB equation posed on the periodic domain $\mathbb{T}:=\mathbb{R}\text{/}2\pi \mathbb{Z}$. Hence, we consider the IVP
\begin{equation}\label{GB2}
\begin{cases}
		\partial_{t}u+\beta D^{2m}\partial_{x}u+\alpha \mathcal{H}^{2r}\partial_{x}u+\partial_{x}(u^2)=0, \quad x\in \mathbb{T},\; t>0,\\
		u(x,0)=u_{0}(x), 
	\end{cases}
\end{equation}
where now the Fourier multipliers $D^{2m}$ and $\mathcal{H}^{2r}$ are defined as $$\widehat{D^{2m}u}(k)=|k|^{2m}\widehat{u}(k), \quad \widehat{\mathcal{H}^{2r}u}(k)=-|k|^{2r}\widehat{u}(k), \quad k\in\mathbb{Z}.
$$
Specifically, we will address the  following two problems:\\

\textbf{\emph{Exact controllability problem:}} Let $T>0$ be given. Given an initial state  $u_{0}$ and a terminal state $u_{1}$ in a certain space.
Can one find an appropriate control input $f$ such that the IVP  
\begin{equation}\label{cntl}
\begin{cases}
\partial_{t}u+\beta D^{2m}\partial_{x}u+\alpha \mathcal{H}^{2r}\partial_{x}u+\partial_{x}(u^2)=f(x,t), & x\in \mathbb{T},\; t\in (0,T),\\
u(x,0)=u_{0}(x),
\end{cases}
\end{equation}
admits a solution $u$ which satisfies
$u(T)=u_{1}$?\\

\textbf{\emph{Asymptotic stabilizability problem:}} Let  $u_{0}\in L_{p}^{2}(\mathbb{T})$ be given.
Can one define a linear feedback control law $B$ such that the resulting closed-loop system
$$	\begin{cases}
		\partial_{t}u+\beta D^{2m}\partial_{x}u+\alpha \mathcal{H}^{2r}\partial_{x}u+\partial_{x}(u^2)=Bu, & x\in \mathbb{T},\;\; t\in \mathbb{R}^{+},\\
		u(x,0)=u_{0}(x),
	\end{cases}$$
is globally well-defined and asymptotically stable  to an equilibrium point as $t\rightarrow +\infty$?\\

Dispersive equations in the periodic context have attracted attention of both mathematicians and physicists in recent years. In particular, controllability and stabilization  on a periodic domain have been widely studied taking into account the smoothing properties of the linear part of the equation in the well known Bourgain spaces. Indeed, let us recall some works closely related to our purposes. For small data, the controllability and stabilization for the KdV  equation was first studied in \cite{10}; such results were extended to arbitrarily large data in \cite{14}. Since the strength of the dispersion in the BO equation is too weak when compared to the nonlinearity, the problems of controllability and stabilization for BO is even more delicate and it was studied only recently in \cite{Linares Rosier}, where the authors established a semi-global
stabilization result  of weak solutions in $L^2_p(\mathbb{T})$ and the local exponential stability in
$H^s_p(\mathbb{T})$, $s>1/2$. After that, in \cite{Laurent1} the authors extended these results to global ones in $L^2_p(\mathbb{T})$. For the Benjamin equation, controllability and stabilization were addressed in \cite{panthee} and \cite{panthee1}.

The  dispersion generalized Benjamin-Ono equation 
\begin{equation}\label{GBO}
	\partial_{t}u+ D^{2m}\partial_{x}u+\partial_{x}(u^2)=0,
\end{equation}
also appears as an important dispersive model and it has been widely studied. In a physical context, it  models vorticity waves in coastal zones \cite{Shrira}. In the case $m$ is a positive integer (so that \eqref{GBO} can also be viewed as a high-order KdV equation), local and global control and stabilization properties were established in \cite{zhao} and  \cite{Capistrano Kawak Vielma}. In the case $m$ is a real number satisfying  $m \in (1/2,1),$ stabilization was investigated  in \cite{Flores OH and Smith}, where the authors showed that \eqref{GBO} is globally well-posed and locally exponentially stabilizable in $L^{2}_{p}(\mathbb{T}).$  The feedback control law was constructed as follows: take a real non-negative smooth function $g=g(x)$  defined on $\mathbb{T}$ such that $\omega:=\text{supp}(g)=\{x\in \mathbb{T}: g(x)>0\}$ is an open interval and 
\begin{equation}\label{OPG1}
	2\pi [g]=\int_{\mathbb{T}}g(x)dx=1,
\end{equation}
where $[g]$ represents the mean value of the function $g.$ Define the operator $G$ as
\begin{equation}\label{Gdef}
G(\phi)(x):=g(x)\left(\phi(x)- \int_{\mathbb{T}} \phi(y) g(y) dy\right),\;\;\phi\in L^{2}_{p}(\mathbb{T}),
\end{equation}
Then, to keep the mass conserved they choose the feedback control law to be of the form 
$$	Bu=-GD^{\delta}Gu,$$
for some positive real $\delta$.

Our strategy here to establish the stabilization results will follow closely the one in \cite{Flores OH and Smith}. Throughout the paper we assume $m>\frac{1}{2}$ and $0<r<m$. Although the most interesting and difficult case is $m\in(\frac{1}{2},1)$ where the dispersion is below that of the KdV equation, we assume $m>\frac{1}{2}$ in order to keep in mind that our results may be seen as an extension of the ones in \cite{Flores OH and Smith} (and also in \cite{zhao}) to the model \eqref{GB}.   Therefore, one of our main interest is to study the stabilization properties of a locally-damped variant of \eqref{GB2} given by
\begin{equation}\label{GB3}
	\begin{cases}
		\partial_{t}u+\beta D^{2m}\partial_{x}u+\alpha \mathcal{H}^{2r}\partial_{x}u+\partial_{x}(u^2)=-GD^{\delta}Gu, \quad x\in \mathbb{T},\; t>0,\\
		u(x,0)=u_{0}(x), 
	\end{cases}
\end{equation}
on $L^{2}_{p}(\mathbb{T}).$ Note that in view of the signs of the operators $D^{2m}$ and $\mathcal{H}^{2r}$ both terms in the linear part of \eqref{GB3} are competing each other; so that we have an extra difficulty to control. In addition, as we will see below, the eigenvalues associated with the linear part of the equation may have multiplicities; so, our analysis needs extra efforts to deal with multiple eigenvalues.

\subsection{Main results.}\label{intr2}

Let us start with the controllability result. As for the BO equation, the controllability problem for \eqref{GB2} in $H^{s}_{p}(\mathbb{T})$ turns out to be a very difficult and challenging problem.  However, if we do not require to have an exact controllability result in its maximum strength as was done, for instance,  in \cite{14} for the KdV equation, we may consider the control function $f$ in \eqref{cntl} to be of the form $f(x,t)= G(h)(x,t)$  with an additional condition on the function $g$ defined in \eqref{OPG1} (see condition \eqref{NLcontro2} and Remark \ref{NLcontrol1}) in the same spirit of  \cite{10,1}. More precisely, in order to get an exact control result, $h$ may be considered as the new control function and we suppose that $\omega=\textrm{supp}(g)=\mathbb{T}.$ Hence, the control function $f\equiv G(h)$ acts on all $\mathbb{T}.$ In others words, we have global exact controllability in the global control case as stated in the following result.
\begin{thm}\label{NLcontrol}
	Let $\alpha>0,$ $\beta>0,$ and $s\in \mathbb{R}$ with $s\geq 0$ be given. Assume that
	\begin{equation}\label{NLcontro2}
		g(x)=\frac{1}{2\pi},\;\;\text{for all}\;x\in \mathbb{T}.
	\end{equation}
	Let $T>0$ be given, then for any $u_{0},u_{1}\in H^{s+1}_{p}(\mathbb{T}),$ with $[u_{0}]=[u_{1}]$ there exists a function $h\in L^{2}(0,T;H_{p}^{s}(\mathbb{T}))$ such that the solution $u\in C([0,T];H_{p}^{s+1}(\mathbb{T}))$ of the nonlinear system 
		$$\begin{cases}
			\partial_{t}u+\beta D^{2m}\partial_{x}u+\alpha \mathcal{H}^{2r}\partial_{x}u+\partial_{x}(u^2)=Gh,& x\in \mathbb{T},\; t\in[0,T],\\
			u(x,0)=u_{0}(x), &
		\end{cases}$$
	satisfies $u(x,T)=u_{1}(x),\;\;x\in \mathbb{T}.$
\end{thm}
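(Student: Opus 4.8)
The plan is to exploit the fact that, under hypothesis \eqref{NLcontro2}, the system is \emph{fully actuated}: no observability/HUM argument will be needed, and the nonlinearity can be absorbed directly into the control. First I would record the structure of $G$. With $g\equiv\frac{1}{2\pi}$ one has $G\phi=\frac{1}{2\pi}(\phi-[\phi])=\frac{1}{2\pi}P_{0}\phi$, where $P_{0}$ is the projection onto the mean-zero functions. Hence $G$ is a bijection of the mean-zero subspace onto itself, with inverse $2\pi\,\mathrm{Id}$. Since $\mathcal{L}u:=\beta D^{2m}\partial_{x}u+\alpha\mathcal{H}^{2r}\partial_{x}u$, the nonlinearity $\partial_{x}(u^{2})$, and $Gh$ are all mean-zero, the mass $[u]$ is conserved; this is exactly why the compatibility condition $[u_{0}]=[u_{1}]$ is imposed, and the whole construction will take place on the affine subspace $\{\phi:[\phi]=[u_{0}]\}$.

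Next I would pass to the interaction picture. Let $W(t)$ be the unitary group with symbol $\widehat{W(t)\phi}(k)=e^{-i\omega(k)t}\widehat{\phi}(k)$, where $\omega(k)=k(\beta|k|^{2m}-\alpha|k|^{2r})$; it is unitary on every $H^{\sigma}_{p}(\mathbb{T})$ and, since $\omega(0)=0$, preserves the mean. Writing $u=W(t)v$ and using $\partial_{t}W(t)=-\mathcal{L}W(t)$ yields $\partial_{t}u+\mathcal{L}u=W(t)\partial_{t}v$, so the controlled equation is equivalent to $W(t)\partial_{t}v+\partial_{x}(u^{2})=Gh$. I would then simply \emph{prescribe} the trajectory: choose $\theta\in C^{\infty}([0,T])$ with $\theta(0)=0$ and $\theta(T)=1$, and set
\begin{equation*}
 v(t)=(1-\theta(t))\,u_{0}+\theta(t)\,W(-T)u_{1},\qquad u(t):=W(t)v(t).
\end{equation*}
Then $u(0)=u_{0}$, $u(T)=W(T)W(-T)u_{1}=u_{1}$, and $u\in C([0,T];H^{s+1}_{p}(\mathbb{T}))$ because $u_{0},W(-T)u_{1}\in H^{s+1}_{p}(\mathbb{T})$ and $W$ is unitary and strongly continuous there. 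Moreover $\partial_{t}v=\theta'(t)\big(W(-T)u_{1}-u_{0}\big)$ is mean-zero (here $[u_{0}]=[u_{1}]$ and the fact that $W$ preserves the mean are used), so $[v(t)]\equiv[u_{0}]$ and the trajectory stays on the correct affine subspace.

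Finally I would read off the control by defining
\begin{equation*}
 h:=2\pi\big(W(t)\partial_{t}v+\partial_{x}(u^{2})\big).
\end{equation*}
Since $\partial_{t}v\in H^{s+1}_{p}(\mathbb{T})$ and, because $H^{s+1}_{p}(\mathbb{T})$ is an algebra for $s\ge 0$, $\partial_{x}(u^{2})\in C([0,T];H^{s}_{p}(\mathbb{T}))$, the function $h$ is mean-zero and lies in $C([0,T];H^{s}_{p}(\mathbb{T}))\subset L^{2}(0,T;H^{s}_{p}(\mathbb{T}))$. As $h$ is mean-zero we get $Gh=\frac{1}{2\pi}h=W(t)\partial_{t}v+\partial_{x}(u^{2})$, so $(u,h)$ satisfies the equation; equivalently one verifies the Duhamel identity $u(t)=W(t)u_{0}+\int_{0}^{t}W(t-\tau)\big[Gh-\partial_{x}(u^{2})\big]\,d\tau$ directly, using $Gh-\partial_{x}(u^{2})=W(\tau)\partial_{t}v$ and the group law $W(t-\tau)W(\tau)=W(t)$. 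Thus $u$ is the mild solution issued from $u_{0}$ with control $h$, and it reaches $u_{1}$ at time $T$; uniqueness (so that this $u$ is indeed \emph{the} solution in the statement) is then supplied by the well-posedness theory in $H^{s+1}_{p}(\mathbb{T})$.

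I expect the difficulty here to be bookkeeping rather than conceptual. The decisive point is hypothesis \eqref{NLcontro2}: full actuation makes $G$ surjective onto the mean-zero subspace, which both trivializes the linear controllability (no observability inequality or Gramian inversion) and lets the mean-zero nonlinearity be reabsorbed into the control, so no fixed-point argument and no smallness hypothesis on $u_{0},u_{1}$ is needed—hence the result is global. The only genuine subtlety is the unboundedness of $\mathcal{L}$: a naive trajectory construction would produce a control losing $2m+1$ derivatives through the $\mathcal{L}u$ term. Passing to the interaction picture $u=W(t)v$ is precisely what removes this loss, leaving a single derivative loss in $\partial_{x}(u^{2})$ that is matched exactly by the gap between the state space $H^{s+1}_{p}$ and the control space $H^{s}_{p}$.
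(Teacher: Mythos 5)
Your argument is correct, and it reaches the conclusion by a genuinely more self-contained route than the paper. The paper's proof has the same two-layer structure in spirit — first steer the linear flow from $u_0$ to $u_1$, then absorb $\partial_x(u^2)$ into the control using the surjectivity of $G$ onto mean-zero functions — but it implements the first layer by invoking the abstract linear exact controllability result (Theorem \ref{ControlLa}, resting on the spectral analysis $(H1)$–$(H3)$, the gap condition and biorthogonal families from the companion paper), and the second layer by a Fredholm-alternative argument showing that $\tilde{G}\phi=\phi-[\phi]$ is invertible on $\mathrm{Ker}(\tilde{G})^{\bot}$, producing a control $h=h_1+h_2$. You observe instead that under \eqref{NLcontro2} the operator $G$ is $\tfrac{1}{2\pi}$ times the projection onto mean-zero functions, so the system is fully actuated; you then prescribe the trajectory explicitly in the interaction picture and read off the control in closed form, $h=2\pi\bigl(W(t)\partial_t v+\partial_x(u^2)\bigr)$. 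This eliminates the need for any observability/moment-method input and makes transparent why no smallness or fixed-point argument is required; the only structural points you must (and do) check are that the free group preserves the mean and real-valuedness, that $\partial_t v$ is mean-zero thanks to $[u_0]=[u_1]$, and that $H^{s+1}_p(\mathbb{T})$ is an algebra so that $\partial_x(u^2)\in C([0,T];H^s_p(\mathbb{T}))$ — exactly the derivative-counting that motivates the paper's choice of data in $H^{s+1}_p$ versus controls in $H^s_p$. The trade-off is that the paper's decomposition isolates the linear steering step, which continues to make sense for more general damping profiles $g$ (cf.\ Remark \ref{NLcontrol1}), whereas your one-shot formula is tied to full actuation; also note that both your proof and the paper's implicitly defer to a well-posedness/uniqueness theory in $H^{s+1}_p(\mathbb{T})$ in order to call the constructed trajectory \emph{the} solution of the controlled nonlinear problem.
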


In order to prove Theorem \ref{NLcontrol} we follow the strategy in \cite{1} and \cite{10} which relies on proving that the controllability of the nonlinear problem follows almost immediately from the controllability of the linear problem.
Note that we have taken the data $u_0$ and $u_1$ in $H_p^{s+1}(\mathbb{T})$ instead of $H_p^{s}(\mathbb{T})$; this implies that once we have solved the linear problem in $H_p^{s+1}(\mathbb{T})$ we promptly obtain that $\partial_{x}(u^2)$ belongs to $H_p^{s}(\mathbb{T})$. We point out that it would be interesting to obtain the exact controllability for data in $H_p^{s}(\mathbb{T})$.

\begin{rem}\label{NLcontrol1}
Eventually, the function $g$ in \eqref{NLcontro2} may be replaced by a more general one satisfying \eqref{OPG1} and
	$$|g(x)|>g_0>0,\;\;\text{for all}\;x\in \mathbb{T},$$
	for some constant $g_0\in \mathbb{R}$ (see \cite{1} and \cite{10}).
\end{rem}

Next we pay particular attention to the stabilization problem in the space  $L_{p}^{2}(\mathbb{T})$. First of all we establish the global well-posedness of the IVP \eqref{GB3}.

\begin{thm}\label{GWP}
	Let $\alpha>0,$ $\beta>0,$ $r>0,$ $m>\frac{1}{2},$ with $r<m$  and $T>0$ be given.	 Then for any given $u_{0}\in L^{2}_{p}(\mathbb{T})$ and $0<\delta\leq 1$ with $\max\{0, 2-2m\}<\delta$ (and therefore $\delta<2m$)  the IVP \eqref{GB3} admits a unique solution $u\in C([0,T];L_p^{2}(\mathbb{T})).$ Moreover, the solution map $u_{0}\in L^{2}_{p}(\mathbb{T})\longmapsto u(t)\in C([0,T];L_p^{2}(\mathbb{T}))$ is uniformly continuous within a bounded set of $L^{2}_{p}(\mathbb{T}).$
\end{thm}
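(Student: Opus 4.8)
The plan is to rewrite \eqref{GB3} in its mild (Duhamel) form and solve it by a contraction argument in a Bourgain-type space adapted to the damped linear flow, and then to pass from local to global existence by means of the $L^{2}$ a priori bound coming from the dissipative structure. Set $\mathcal{A}u:=-\beta D^{2m}\partial_{x}u-\alpha\mathcal{H}^{2r}\partial_{x}u-GD^{\delta}Gu$. The first step is to check that $\mathcal{A}$ generates a $C_{0}$-semigroup $\{W(t)\}_{t\geq0}$ on $L^{2}_{p}(\mathbb{T})$: the dispersive part is skew-adjoint, with purely imaginary Fourier symbol $-i\phi(k)$ where $\phi(k)=\beta|k|^{2m}k-\alpha|k|^{2r}k$, and generates a unitary group, while the damping is dissipative because $\langle GD^{\delta}Gu,u\rangle=\|D^{\delta/2}Gu\|_{L^{2}}^{2}\geq0$. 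I would therefore obtain generation from Lumer--Phillips (using that $\delta\leq1<2m$, so the damping is of lower order than the dispersion), after observing that $GD^{\delta}G$ has zero mean, whence the mean of $u$ is conserved and one may work on the mean-zero space.

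Next I would fix the functional framework. Let $X^{s,b}$ be the Bourgain space attached to the phase $\phi$, whose norm is built from the weight $\langle k\rangle^{s}\langle\tau-\phi(k)\rangle^{b}$, and let $\widetilde{X}^{s,b}$ be its dissipation-normalized version, in which the modulation weight is replaced by one that simultaneously records the dispersive modulation $\tau-\phi(k)$ and the $\delta$-order smoothing provided by the damping, in the spirit of the KdV--Burgers analysis of Molinet--Ribaud. The purpose of this normalization is that, since the dispersion may be weak (the regime $m$ near $1/2$), the modulation weight by itself does not recover enough regularity to absorb the derivative in the nonlinearity, so the dissipation must supply the balance. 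In this setting I would prove the linear estimates for appropriate $b>1/2$: the homogeneous bound $\|W(t)u_{0}\|_{\widetilde{X}^{0,b}}\lesssim\|u_{0}\|_{L^{2}_{p}}$, the inhomogeneous estimate $\|\int_{0}^{t}W(t-t')F(t')\,dt'\|_{\widetilde{X}^{0,b}}\lesssim\|F\|_{\widetilde{X}^{0,b-1}}$, and the embedding $\widetilde{X}^{0,b}\hookrightarrow C([0,T];L^{2}_{p}(\mathbb{T}))$, all localized on $[0,T]$ by standard time cutoffs.

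The crux of the argument---and the step I expect to be the main obstacle---is the bilinear estimate
$$\|\partial_{x}(uv)\|_{\widetilde{X}^{0,b-1}}\lesssim\|u\|_{\widetilde{X}^{0,b}}\,\|v\|_{\widetilde{X}^{0,b}}.$$
Here one full power of $k$ is lost to the derivative and must be regained from the combined gain of dispersion and dissipation. Passing to Fourier variables reduces the estimate to a case analysis governed by the resonance function $\phi(k)-\phi(k_{1})-\phi(k_{2})$ with $k=k_{1}+k_{2}$: in the region where this quantity is large the dispersive weight $\langle\tau-\phi(k)\rangle$ yields a gain of order $2m$, while in the near-resonant region the dissipative weight must furnish a gain of order $\delta$. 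Balancing the two shows that the lost derivative can be absorbed exactly when the total smoothing exceeds one, that is when $2m+\delta>2$, which is precisely the hypothesis $\max\{0,2-2m\}<\delta$ (for $m\geq1$ this reduces to $\delta>0$). Carrying out the resulting summations over the modulation variables, with attention paid to the zero mode and to the possible multiplicities of $\phi$, is the technical heart of the proof.

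With the linear and bilinear estimates available, applying the contraction mapping theorem to the Duhamel map $\Phi(u)=W(t)u_{0}-\int_{0}^{t}W(t-t')\partial_{x}(u^{2})(t')\,dt'$ on a ball of $\widetilde{X}^{0,b}$ over a short interval $[0,T_{0}]$ produces a unique local solution $u\in C([0,T_{0}];L^{2}_{p}(\mathbb{T}))$ depending Lipschitz-continuously on $u_{0}$. To globalize, I would invoke the dissipative energy identity: pairing \eqref{GB3} with $u$, the skew-adjoint terms and $\int_{\mathbb{T}}\partial_{x}(u^{2})\,u\,dx$ vanish, leaving $\frac{1}{2}\frac{d}{dt}\|u\|_{L^{2}_{p}}^{2}=-\|D^{\delta/2}Gu\|_{L^{2}}^{2}\leq0$, hence $\|u(t)\|_{L^{2}_{p}}\leq\|u_{0}\|_{L^{2}_{p}}$ for all $t\geq0$. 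Because the local time $T_{0}$ depends only on $\|u_{0}\|_{L^{2}_{p}}$, this uniform bound lets one iterate the local construction to cover any $[0,T]$, yielding the global solution. The claimed uniform continuity of the flow on bounded sets of $L^{2}_{p}(\mathbb{T})$ then follows by running the same estimates on the difference of two solutions issuing from data in a fixed ball.
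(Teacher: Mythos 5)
Your overall strategy (Duhamel formulation, dissipation-normalized Bourgain space, linear plus bilinear estimates, contraction, then globalization via the decreasing $L^{2}$ norm) is the same as the paper's, and your heuristic for the bilinear estimate correctly identifies the balance $2m+\delta>2$. However, there is a genuine gap at the foundation of your functional setting. You build the space $\widetilde{X}^{0,b}$ around the semigroup $W(t)$ generated by the \emph{full} damped operator $-\beta D^{2m}\partial_{x}-\alpha\mathcal{H}^{2r}\partial_{x}-GD^{\delta}G$ and then assert the homogeneous, inhomogeneous and embedding estimates as if they were standard. They are not: $GD^{\delta}G$ is not a Fourier multiplier (the function $g$ is a localized bump, not a constant), so $W(t)$ does not act diagonally in Fourier and there is no phase/weight of the form $\langle(\tau-\lambda_{k})/\langle k\rangle^{\delta}\rangle$ naturally attached to it. The paper's essential device, which your outline omits, is the decomposition $GD^{\delta}Gv=\widetilde{D}^{\delta}v+N_{1}v+Rv$ in \eqref{locdamp}: one extracts the multiplier part $\widetilde{D}^{\delta}$, whose symbol satisfies $d(k)\sim\langle k\rangle^{\delta}$ (Lemma \ref{locdamp4}), builds the space $Z^{b}$ and the contraction around the multiplier semigroup $S_{\mu}(t)=\bigl(e^{i\lambda_{k}t-d(k)|t|}\widehat{v_{0}}(k)\bigr)^{\vee}$, and treats $N_{1}$ and $R$ as additional forcing terms. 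Estimating $N_{1}$ in $Z^{b}$ is itself nontrivial (Theorem \ref{stiN1}), resting on the modulation lower bound of Lemma \ref{stiN} for $k\sim n$, $k\neq n$, which exploits the separation of the eigenvalues $\lambda_{k}$; none of this is visible in your plan. Even the bound $\|W(t)v_{0}\|_{Z^{b}_{T}}\lesssim\|v_{0}\|_{L^{2}_{0}}$ that you take for granted is obtained in the paper only \emph{after} these ingredients, by a bootstrap through $S_{\mu}$ (Lemma \ref{staop2}). As written, your linear estimates for $W(t)$ have no proof route, so the contraction argument does not close.

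Two smaller points. First, after reducing to mean-zero data you should note that subtracting the mean $\mu=[u_{0}]$ produces the extra transport term $2\mu\partial_{x}v$ in \eqref{GB6}, which must be carried through all the symbol computations ($\lambda_{k}=-\beta k|k|^{2m}+\alpha k|k|^{2r}-2\mu k$). Second, your description of the bilinear estimate as a dichotomy between a ``large resonance'' region and a ``near-resonant'' region is not how the proof actually runs: Lemma \ref{bilin3} shows the resonance function is always bounded below by $N_{\max}^{2m}N_{\min}$ (for $N_{\max}$ large), and the real case analysis is over which index carries $L_{\max}$ versus $N_{\min}$ and over the normalization factors $\langle k_{j}\rangle^{\delta}$ in the modulations (Lemma \ref{bilin19}). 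This does not invalidate your heuristic, but the work is located elsewhere than you anticipate.
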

Finally, we obtain the following exponential stabilization result, which gives and affirmative answer to the stabilization problem.

\begin{thm}\label{Sta1}
	Under the assumptions of Theorem \ref{GWP}, there exist $\rho>0$ and $\lambda>0$ such that for any $u_{0}\in L^{2}_{p}(\mathbb{T})$ with $\|u_{0}\|_{L_{p}^{2}(\mathbb{T})}<\rho$, the unique solution $u\in C([0,+\infty);L_p^{2}(\mathbb{T}))$ of system \eqref{GB3} satisfies
	$$\|u(\cdot, t)-[u_{0}]\|_{L^{2}_{p}(\mathbb{T})}\leq M e^{-\lambda t}\|u_{0}-[u_{0}]\|_{L^{2}_{p}(\mathbb{T})},$$
for all $t\geq0$ and some positive constant $M$.
\end{thm}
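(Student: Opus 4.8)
The plan is to combine the exponential decay of the linear damped semigroup with a contraction argument built on the smoothing estimates behind Theorem~\ref{GWP}, and the first move is to reduce to mean-zero data. Because $G$ annihilates constants (the normalization $2\pi[g]=1$ in \eqref{Gdef} forces $G(c)\equiv0$) and the dispersive operators also kill constants, the mean value is conserved along the flow of \eqref{GB3}, i.e. $[u(t)]=[u_0]$ for all $t$. Writing $u=[u_0]+v$ with $[v]=0$ and using $\partial_x(u^2)=2[u_0]\partial_x v+\partial_x(v^2)$, the function $v$ solves
\[
\partial_t v+\beta D^{2m}\partial_x v+\alpha\mathcal{H}^{2r}\partial_x v+2[u_0]\partial_x v+GD^\delta G v=-\partial_x(v^2),\qquad v(0)=u_0-[u_0].
\]
Since $|[u_0]|\le C\|u_0\|_{L^2_p(\mathbb{T})}<C\rho$, the transport term $2[u_0]\partial_x v$ is a small, skew-adjoint perturbation that I would absorb into the linear operator; its skew-adjointness leaves the energy identity untouched. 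The assertion of Theorem~\ref{Sta1} then reduces to an exponential-decay estimate for $v$ in $L^2_p(\mathbb{T})$.

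Next I would record the linear input. Let $W(t)$ be the $C_0$-semigroup generated by $A:=-\beta D^{2m}\partial_x-\alpha\mathcal{H}^{2r}\partial_x-2[u_0]\partial_x-GD^\delta G$ on the mean-zero subspace $L^2_{p,0}(\mathbb{T})$. Because $D^{2m}\partial_x$, $\mathcal{H}^{2r}\partial_x$ and $[u_0]\partial_x$ are skew-adjoint while $G=G^{*}$, one has the dissipation identity $\tfrac{d}{dt}\|w\|^2_{L^2_p(\mathbb{T})}=-2\|D^{\delta/2}Gw\|^2_{L^2_p(\mathbb{T})}\le0$. Combined with the spectral/observability analysis of the damped operator (which, as noted in the introduction, must contend with eigenvalues of higher multiplicity), this yields, uniformly for $[u_0]$ in a bounded set,
\[
\|W(t)\phi\|_{L^2_p(\mathbb{T})}\le M\,e^{-\lambda_0 t}\,\|\phi\|_{L^2_p(\mathbb{T})},\qquad \phi\in L^2_{p,0}(\mathbb{T}),
\]
for some $M,\lambda_0>0$. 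This decay of the semigroup is the linear engine of the proof, and I would treat it as established before reaching Theorem~\ref{Sta1}.

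The nonlinear step is a contraction. Fix $0<\lambda<\lambda_0$ and, via Duhamel, seek a fixed point of
\[
\Psi(v)(t)=W(t)v(0)-\int_0^t W(t-s)\,\partial_x\!\left(v(s)^2\right)ds
\]
in the exponentially weighted space $Y_\lambda$ obtained by attaching the weight $\sup_{t\ge0}e^{\lambda t}\|\cdot\|$ to the dissipation-normalized Bourgain space used to prove Theorem~\ref{GWP}. The linear term obeys $\|W(t)v(0)\|_{L^2_p(\mathbb{T})}\le Me^{(\lambda-\lambda_0)t}\|v(0)\|_{L^2_p(\mathbb{T})}$, hence lies in $Y_\lambda$ with norm $\lesssim\|v(0)\|_{L^2_p(\mathbb{T})}$. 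For the Duhamel integral I would couple the decay of $W(t-s)$ with the bilinear estimate for $\partial_x(v^2)$ in those spaces, obtaining $\|\Psi(v)\|_{Y_\lambda}\lesssim\|v(0)\|_{L^2_p(\mathbb{T})}+\|v\|_{Y_\lambda}^2$ together with a matching Lipschitz bound. Choosing $\rho$ small makes $\Psi$ a contraction on a ball of radius $O(\rho)$ in $Y_\lambda$; its fixed point is the solution $v$, and it satisfies $\|v(t)\|_{L^2_p(\mathbb{T})}\le e^{-\lambda t}\|v\|_{Y_\lambda}\lesssim e^{-\lambda t}\|v(0)\|_{L^2_p(\mathbb{T})}$, which—recalling $v=u-[u_0]$—is precisely the claimed estimate.

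The main obstacle I anticipate is this last step: reconciling the global-in-time exponential weight with the local-in-time nature of the Bourgain estimates, since the bilinear smoothing bounds are naturally stated on bounded time windows. I would handle this by splitting $[0,\infty)$ into unit intervals, applying the bilinear estimate on each window, and summing the resulting geometric series whose convergence is guaranteed precisely by the factor $e^{-\lambda_0 t}$; equivalently, one proves a one-step contraction $\|v(T_0)\|_{L^2_p(\mathbb{T})}\le\gamma\|v(0)\|_{L^2_p(\mathbb{T})}$ with $\gamma<1$ on a fixed interval $[0,T_0]$ and iterates, using uniqueness and conservation of mass to propagate the smallness. A secondary difficulty is that the damping $GD^\delta G$ of order $\delta$ must dominate the two competing dispersive terms of different orders $2m$ and $2r$; keeping $\max\{0,2-2m\}<\delta\le1$ as in Theorem~\ref{GWP} is exactly what makes the nonlinear estimate close.
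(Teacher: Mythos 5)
Your proposal is correct and follows essentially the same route as the paper: reduction to the mean-zero equation \eqref{GB6} with the transport term $2[u_0]\partial_x$ absorbed into the generator, exponential decay of the damped linear semigroup $W(t)$ (Proposition \ref{staop7}) as the linear engine, and a Duhamel fixed-point argument using the bilinear estimate in the dissipation-normalized Bourgain space (Lemma \ref{LER}), closed by iterating a one-step decay estimate $\|v(T)\|_{L^2_0(\mathbb{T})}\le e^{-\lambda' T}\|v_0\|_{L^2_0(\mathbb{T})}$ over successive intervals. Your alternative framing via a globally exponentially weighted space is only cosmetic, since you yourself resolve its incompatibility with the local-in-time Bourgain estimates by falling back on exactly the interval-by-interval contraction that the paper (Theorem \ref{LER2}) carries out.
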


In order to prove Theorems \ref{GWP} and \ref{Sta1} we will assume that the initial data has zero mean, which, essentially,  adds a linear term in the equation. Indeed, note that for any solution $u$ of  \eqref{GB3} its mean value $[u]$ is invariant, that is,
$$ [u]=\frac{1}{2\pi}\int_{0}^{2\pi}u(x,t)dx=\frac{1}{2\pi}\int_{0}^{2\pi}u_{0}(x)dx = [u_0].$$
 Thus, by introducing the number $\mu:=[u(\cdot,t)]=[u_{0}]$ and making the change of variable
$$v=u-\mu.$$
we see that  $[v]=0$ and solving \eqref{GB3} is equivalent to solve	
\begin{equation}\label{GB6}
	\begin{cases}
		\partial_{t}v+\beta D^{2m}\partial_{x}v+\alpha \mathcal{H}^{2r}\partial_{x}v+2\mu  \partial_{x}v+  \partial_{x}(v^2)=-GD^{\delta}Gv,& x\in \mathbb{T},\; t>0,\\
		v(0)=v_{0}, &
	\end{cases}
\end{equation}
From now on, $\mu$ will denote a given real constant and we shall establish the well-posedness and exponential stability results in $L^{2}_{0}(\mathbb{T})$ for the problem  \eqref{GB6}. The main ingredient to prove the results is the introduction of the dissipation-normalized Bourgain spaces; such spaces allow one to get smoothing via dispersion and dissipation.

\begin{rem}
	Throughout the paper we will assume $\alpha>0$ and $\beta>0$. However, Theorems \ref{NLcontrol}, \ref{GWP} and \ref{Sta1} also hold if we assume either $\alpha=0$ or $\beta=0$; in this case many of the proofs may be simplified since the linear terms are not competing each other.
\end{rem}

\subsection{Structure of the paper.}\label{intr3}
Some preliminaries are given in section \ref{preliminares}, where we set some notations and introduce the Sobolev spaces $H_{p}^{s}(\mathbb{T})$ of order $s\in \mathbb{R}.$  In Section \ref{Linear Systems}, we study the control and stabilization properties for the respective linearized system. In Section \ref{Proof}, we provide the proof of Theorem \ref{NLcontrol} which is a direct consequence of the exact controllability result presented for the linearized system. In Section \ref{Bourgspace}, we introduce the dissipation-normalized Bourgain space associated to the DGB equation and  establish some linear and integral estimates; we finalize that section with the proof of a bilinear estimate, which is the main ingredient to prove our well-posedness result. The  existence of global solutions for the IVP \eqref{GB3} is presented in Section \ref{GWP1}. Finally, in Section \ref{LES}, we show the local exponential stabilization result stated in Theorem \ref{Sta1}.

\section{Preliminaries}\label{preliminares}
In this section we introduce some basic notations and recall the main tools to obtain our results. Given two positive constants $a$ and $b$ we use $a\lesssim b$ to indicate that $a\leq Cb$ for some positive constant $C$; also, we use $a \lesssim_{X,\ldots,Y} b$ to say that the implicit constant $C$ depends on the parameters $X,\ldots,Y$. Also, $a\sim b$ means that $C^{-1}a\leq b\leq Ca$. We denote by $\mathscr{P}$ the space $C^\infty_p(\mathbb{T})$  of all $C^\infty$  functions that are $2\pi$-periodic. By $\mathscr{P}'$ (the topological dual of $\mathscr{P}$) we denote the space of all periodic distributions.  By $L^2_p(\mathbb{T})$ we denote the standard space of the square integrable $2\pi$-periodic functions.

 The Fourier transform of $v\in \mathscr{P}'$ is the sequence $\{\widehat{v}(k)\}_{k\in\mathbb{Z}}$ defined as
$$	\widehat{v}(k)=\frac{1}{2 \pi}\langle v,e^{-ikx}\rangle,\;\; \;k \in \mathbb{Z}.$$
Let $\mathscr{S}'(\mathbb{Z})$ denote the space of the sequences with slow growth. The map $^\wedge:\mathscr{P}'\to\mathscr{S}'(\mathbb{Z})$ is a linear bijection with inverse \;$^{\vee}:\mathscr{S}'(\mathbb{Z})\rightarrow \mathscr{P}'$   (the inverse Fourier transform) defined by
$$\alpha=\{\alpha_{k}\}_{k\in \mathbb{Z}}\mapsto
\alpha^{\vee}(x):=\sum_{k\in\mathbb{Z}}\alpha_{k}e^{ikx},$$
and the series converges  in the sense of $\mathscr{P}'$.

Next we introduce the periodic Sobolev spaces. For a more detailed description and properties of
these spaces, we refer the reader to \cite{6}. Given $s\in\Real$, the (periodic) Sobolev space of order $s$ is defined as
$$H^{s}_{p}(\mathbb{T})=\left\{ v\in \mathscr{P}' :\; \|v\|_{H^{s}_{p}(\mathbb{T})}^{2}:=2\pi\sum_{k\in\mathbb{Z}}(1+|k|)^{2s}|\widehat{v}(k)|^{2}<\infty \right\}.$$
The space $H^{s}_{p}(\mathbb{T}) $ is a Hilbert space endowed with the inner product
\begin{equation*}
	\displaystyle{(u, v)_{H^{s}_{p}(\mathbb{T})}= 2\pi  \sum_{k\in \mathbb{Z}}
		(1+|k|)^{2s}\widehat{u}(k)\;\overline{\widehat{v}(k)}.}
\end{equation*}
For any $s\in \Real$, $(H^{s}_{p}(\mathbb{T}))'$, the topological dual of $H^{s}_{p}(\mathbb{T})$, is isometrically isomorphic to $H^{-s}_{p}(\mathbb{T})$, where the duality is implemented by the pairing
$$
\displaystyle{\langle h, v\rangle_{H^{-s}_{p}(\mathbb{T})\times H_{p}^{s}(\mathbb{T})}= 2\pi \sum_{k\in \mathbb{Z}}
	\widehat{h}(k)\;\overline{\widehat{v}(k)}},\;\;\text{for all}\; v \in H^{s}_{p}(\mathbb{T}),\;h\in H^{-s}_{p}(\mathbb{T}).
$$

	It may be proved that any periodic distribution $v\in \mathscr{P}'$ may be written as (see, for instance, \cite[page 188]{6})
	\begin{equation}\label{prep}
		v=\sqrt{2\pi}\sum_{k\in\mathbb{Z}}\widehat{v}(k)\psi_{k},
	\end{equation}
	where
	\begin{equation}\label{spi}
		\psi_{k}(x):=\frac{e^{ikx}}{\sqrt{2 \pi}},\;\; k\in \mathbb{Z},
	\end{equation}
	and the series converges in the sense of $\mathscr{P}'$. In particular, any $v\in H^{s}_{p}(\mathbb{T})$, $s\in\Real$, can be written in the form \eqref{prep}.

We also consider the closed subspace
$$H^{s}_{0}(\mathbb{T}):=\left\{ v\in H^{s}_{p}(\mathbb{T)}\left|\right. \;\; \widehat{v}(0)=0\right\}.$$
If $s_{1}, s_{2}\in \mathbb{R}$ with $s_{1}\geq s_{2}$ then
$H_{0}^{s_{1}}(\mathbb{T})$ is densely embedded in $H_{0}^{s_{2}}(\mathbb{T})$. Since $H^0_p(\mathbb{T})$ is isometrically isomorphic to $L^2_p(\mathbb{T})$ we shall denote $H_{0}^{0}(\mathbb{T})$ by $L_{0}^{2}(\mathbb{T}).$
Note that $L_{0}^{2}(\mathbb{T})$ is a closed subspace of
$L^{2}_{p}(\mathbb{T})$.

\section{Linear Systems}\label{Linear Systems}

This section is devoted to study the controllability and stabilization problems for the linear DGB equation. Thus, consideration is given to the associate linear open-loop control system
\begin{equation}\label{GB4}
	\begin{cases}
		\partial_{t}v+\beta D^{2m}\partial_{x}v+\alpha \mathcal{H}^{2r}\partial_{x}v+2\mu \partial_{x}v
		=Gh,\quad x\in \mathbb{T},\; t\in [0,T],\\
		v(x,0)=v_{0}(x), 
	\end{cases}
\end{equation}
where $v=v(x,t)$ denotes a real-valued function, $\mu$ is a real constant,  $h=h(x,t)$ is the applied control function and
the operator $G$ is the bounded linear  operator defined by
\begin{equation}\label{EQ1}
	G(\phi):=g\phi-g\,\langle\phi,g\rangle, \qquad \phi\in H_{p}^{s}(\mathbb{T}),
\end{equation}
where the first product must be understood in the periodic distributional sense and $\langle\cdot,\cdot\rangle$ denotes the pairing between $\mathscr{P}'$ and $\mathscr{P}$ (see Remark 1.2 in \cite{Pastor and Vielma}). Note that if $s\geq0$ then $G$ is exactly the operator in \ref{Gdef}. Recall we are always assuming $\alpha>0$, $\beta>0$, $m>\frac{1}{2}$ and $0<r<m$.

Let $\mathcal{A}:\mathcal{D}(\mathcal{A})\subset L_p^{2}(\mathbb{T}) \to L_p^{2}(\mathbb{T})$ denote the multiplier operator $\mathcal{A}\varphi=-\beta D^{2m}\varphi- \alpha H^{2r}\varphi-2\mu \varphi$ with domain $\mathcal{D}(\mathcal{A})=H_p^{2m}(\mathbb{T}).$ Then $\mathcal{A}$ has  order $2m$ and symbol $a:\mathbb{Z}\to \mathbb{R}$ given by
\begin{equation*}
	a(k):=-\beta |k|^{2m}+ \alpha |k|^{2r}-2\mu.
\end{equation*}
Because  $0<r<m,$ it is easy to see that
\begin{equation*}
	|a(k)|\leq C |k|^{2m}, \;\;\text{for all}\;k\in \mathbb{Z}-\{0\},
\end{equation*}
for some positive constant $C.$

In what follows we will show that we can apply the results in  \cite[Theorem 1.3 and Remark 1.4]{Pastor and Vielma} in order to prove that \eqref{GB4} is exactly controllable for any positive time $T>0$ and exponentially stabilizable  with any given decay rate in the Sobolev spaces. Indeed, first of all  we note that the operator $\partial_{x}\mathcal{A}$ is skew-adjoint in $H_{p}^{s}(\mathbb{T}),$ for any $s\in \mathbb{R}$, that is,
\begin{equation}\label{skew}
	(\partial_{x}\mathcal{A})^{\ast}
	=
	-\partial_{x}\mathcal{A}.
\end{equation}

Using the Fourier transform, it is easy to check that 
the following property holds:
\vskip.2cm
\begin{itemize}
	\item [$(H1)$] $\partial_{x}\mathcal{A}\psi_{k}=i\lambda_{k} \psi_{k},$ where  $\psi_{k}$ is defined in \eqref{spi} and  $\lambda_{k}:=k a(k)=-\beta k|k|^{2m}+ \alpha k |k|^{2r}-2\mu k,$ for all $k\in \mathbb{Z}.$
\end{itemize}
Note that  the eigenvalues in the sequence $\{i \lambda_{k}\}_{k\in \mathbb{Z}}$ are not necessarily distinct. Since we need to distinguish simple and multiples eigenvalues, for each $k_{1}\in \mathbb{Z},$ we set $I(k_{1}):=\{k\in \mathbb{Z}: \lambda_{k}=\lambda_{k_{1}}\}$ and $m(k_{1}):=\#I(k_{1}),$
where $\#I(k_{1})$ denotes the number of elements in $I(k_{1}).$ In particular,  $m(k_1)=1$ if $\lambda_{k_1}$ is a simple eigenvalue.
Concerning the quantity  $m(k_{1})$, we can easily verify that
\begin{itemize}
	\item [$(H2)$] $m(k_{1})\leq5,$ for all $k_{1}\in \mathbb{Z}$. 
\end{itemize}
This is a consequence of the fact that $m(k_{1})$ is less than or equal to the number
of integer roots of the equation $-\beta x |x|^{2m}+\alpha x |x|^{2r}-2\mu x= c,$ where $c$ is an arbitrary real number. Furthermore, depending on the different values of parameters $\beta, \alpha,$ and $\mu$ we have that $m(k_{1})$ is less than or equal to $1,\;3$ or $5.$ See Figure \ref{Fig1}. 
\begin{figure}[h!]
\begin{center}
\includegraphics[scale=0.3]{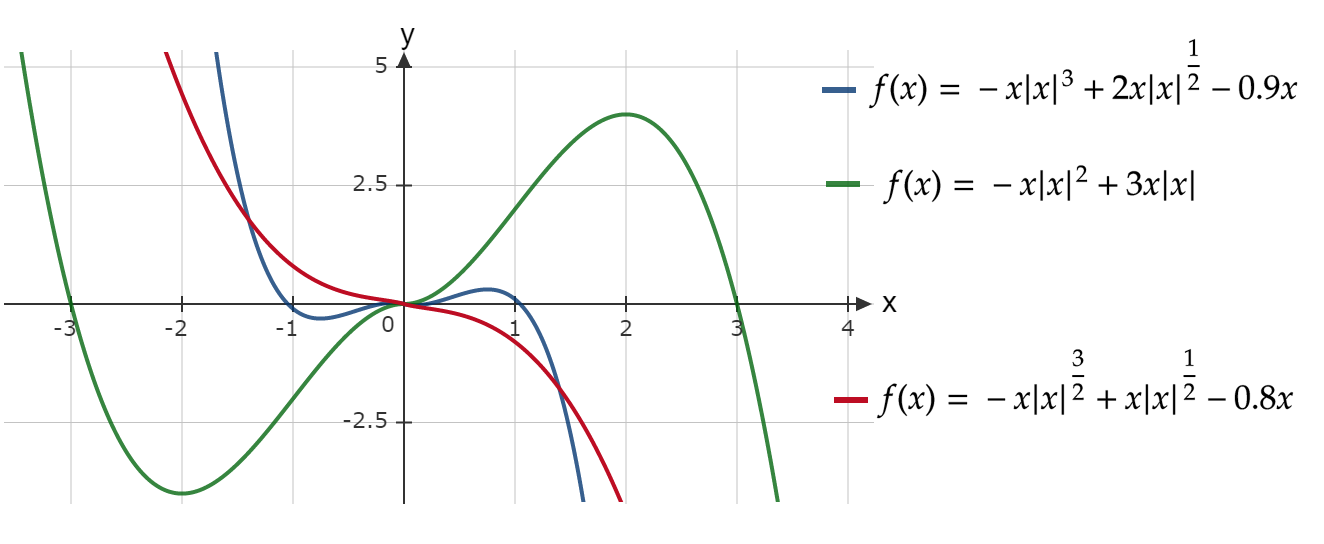}
\caption{Graphs of the function $f(x)=-\beta x |x|^{2m}+\alpha x |x|^{2r}-2\mu x$ for some values of the parameters.}\label{Fig1}
\end{center}
\end{figure}

Next we claim that, for $k$ sufficiently large, 
\begin{equation}\label{incre1}
	\lambda_{k}-\lambda_{k+1}> \alpha (m-r) k^{2r}.
\end{equation}
In fact, 
\begin{equation}\label{incre2}
		\begin{split}
			\lambda_{k}-\lambda_{k+1}&=\beta (k+1)^{2m+1}-\beta k^{2m+1}-\alpha (k+1)^{2r+1}+\alpha k^{2r+1}+2\mu\\
			&=\alpha k^{2r+1}\left[\frac{\beta }{\alpha} k^{2(m-r)} \left(\left(1+\frac{1}{k}\right)^{2m+1}-1 \right)- \left( \left(1+\frac{1}{k}\right)^{2r+1}-1\right)
			+\frac{2\mu}{\alpha k^{2r+1}}\right]\\
			&\geq \alpha k^{2r+1}\left[ \left(\left(1+\frac{1}{k}\right)^{2m+1}-1 \right)- \left( \left(1+\frac{1}{k}\right)^{2r+1}-1\right)
			+\frac{2\mu}{\alpha k^{2r+1}}\right],
	\end{split}
\end{equation}
where we used that, for $k$ sufficiently large, $\frac{\beta }{\alpha} k^{2(m-r)}\geq 1$. Now, from the mean value theorem, for some $\theta\in(2r+1,2m+1)$,
\begin{equation}\label{incre4}
	\begin{split}
		\left(\left(1+\frac{1}{k}\right)^{2m+1}-1 \right)- \left( \left(1+\frac{1}{k}\right)^{2r+1}-1\right)&= \left(1+\frac{1}{k}\right)^{2m+1}-  \left(1+\frac{1}{k}\right)^{2r+1}\\
	&=2(m-r)\left(1+\frac{1}{k}\right)^\theta\ln\left(1+\frac{1}{k}\right)\\
	&\geq \frac{2(m-r)}{k},
\end{split}
\end{equation}
where we used that $\theta>1$ and the fact that $(1+x)\ln(1+x)\geq x$ for any $x>0$.
From \eqref{incre2}-\eqref{incre4}, we obtain 
\begin{equation*}\label{incre3}
		\begin{split}
			\lambda_{k}-\lambda_{k+1}
			\geq \alpha k^{2r+1}\left[ \frac{2(m-r)}{k}
			+\frac{2\mu}{\alpha k^{2r+1}}\right]
			=k^{2r}\left(2(m-r)\alpha
			+\frac{2\mu}{k^{2r}}\right)
	\end{split}
\end{equation*}
which gives \eqref{incre1} for $k$ sufficiently large.

Using \eqref{incre1} and the fact that $\lambda_{-k}=-\lambda_{k},$ for all $k\in \mathbb{Z},$ we conclude that 
\vskip.2cm
\begin{itemize}
	\item[$(H3)$] there exists $k_{1}^{\ast} \in \mathbb{N}$ such that $m(k_{1})=1,$ for all $k_{1}\in \mathbb{Z}$ with $|k_{1}|\geq k_{1}^{\ast}.$ 
\end{itemize}

Therefore, we may count only the distinct eigenvalues to obtain a (maximal) set $\mathbb{I}\subseteq\mathbb{Z}$ and a sequence
$\{\lambda_{k}\}_{k \in \mathbb{I}}$, with the
property that $\lambda_{k_{1}}\neq \lambda_{k_{2}}$, for any $k_{1},k_{2}\in \mathbb{I}$ with $k_{1}\neq k_{2}$. Furthermore, $\{\lambda_{k}\}_{k \in \mathbb{I}}$ is a strictly decreasing sequence for $|k|$ large enough. Additionally, \eqref{incre1} also yields
\begin{equation}\label{gapcondition}
	\displaystyle{\lim_{|k|\rightarrow +\infty}| (k+1) a(k+1)- k a(k)|=+\infty},\;\;\text{where}\;k\;\text{runs over}\;  \mathbb{I}.
\end{equation}

Next we recall that with properties ($H1$), ($H2$), ($H3$), and \eqref{gapcondition}  we may apply the results in \cite[Theorem 1.3 and Remark 1.4]{Pastor and Vielma}  to conclude that the equation
$$
	\partial_{t}v=\partial_{x}\mathcal{A}u+Gh
$$
is exactly controllable in $H^s_p(\mathbb{T})$, for any $s\in \mathbb{R}$. More precisely, we obtain the following result.
\begin{thm}\label{ControlLa}
	Let  $\mu,s\in \mathbb{R}$ be given. 
	Then for any $T> 0$ and   for each $v_{0}, v_{1}\in H^{s}_p(\mathbb{T})$, with $\widehat{v_0}(0)=\widehat{v_1}(0)$, there exists a function $h\in L^{2}([0,T];H_p^{s}(\mathbb{T}))$
	such that the unique solution $v$  of the non-homogeneous system
	\begin{equation*}
		\begin{cases}
			v\in C([0,T];H^{s}(\mathbb{T})), \hbox{}\\
			\partial_{t}v= -\beta D^{2m}\partial_{x}v-\alpha \mathcal{H}^{2r}\partial_{x}v-2\mu \partial_{x}v
			+ Gh\in H_p^{s-(2m+1)}(\mathbb{T}),\;  t\in (0,T),\\
			v(0)=v_{0}\in H_{0}^{s}(\mathbb{T}), 
		\end{cases}
	\end{equation*}
	satisfies $v(T)=u_{1}.$ Furthermore,
	\begin{equation*}
		\|h\|_{L^{2}([0,T];H_p^{s}(\mathbb{T}))} \leq \nu\; (\|v_{0}\|_{H_p^{s}(\mathbb{T})}
		+\|v_{1}\|_{H_p^{s}(\mathbb{T})}),
	\end{equation*}
	for some positive constant $\nu$ depending on  $s,g,$ and $T.$
\end{thm}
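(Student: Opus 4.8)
The plan is to observe that essentially all the verification required to invoke the abstract controllability theorem of \cite{Pastor and Vielma} has already been assembled above, so the proof reduces to confirming that its hypotheses hold and then reading off the conclusion. First I would note that, by \eqref{skew}, the operator $\partial_x\mathcal{A}$ is skew-adjoint on each $H^s_p(\mathbb{T})$ and hence, by Stone's theorem, generates a strongly continuous unitary group $\{W(t)\}_{t\in\mathbb{R}}$ that acts diagonally on the Fourier basis via $W(t)\psi_k=e^{i\lambda_k t}\psi_k$, with $\lambda_k=k\,a(k)$ as in $(H1)$. The nonhomogeneous state is then $v(t)=W(t)v_0+\int_0^t W(t-\tau)Gh(\tau)\,d\tau$, and, projecting onto each eigenmode, the target condition $v(T)=v_1$ becomes the moment problem of finding $h$ so that $\int_0^T e^{-i\lambda_k\tau}\,\widehat{Gh(\tau)}(k)\,d\tau=e^{-i\lambda_k T}\widehat{v_1}(k)-\widehat{v_0}(k)$ for every $k$.

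Next I would reduce controllability to an observability/Ingham estimate through the standard Hilbert Uniqueness Method duality. Exact controllability of \eqref{GB4} in $H^s_p(\mathbb{T})$ is equivalent to an observability inequality for the adjoint homogeneous system, whose dynamics is again diagonal in $\{\psi_k\}$ because $\partial_x\mathcal{A}$ is skew-adjoint. Expanding the adjoint state as a nonharmonic Fourier series and observing it through $G$, the inequality amounts to a lower bound for the family $\{e^{i\lambda_k t}\}_{k\in\mathbb{I}}$ on $(0,T)$. This is exactly where $(H2)$, $(H3)$ and the gap estimate \eqref{incre1}--\eqref{gapcondition} enter: since $\lambda_k-\lambda_{k+1}\geq\alpha(m-r)k^{2r}\to+\infty$, the spectral gaps do not merely stay bounded below but diverge, so a generalized Ingham inequality holds on \emph{every} interval $(0,T)$ with $T>0$, which is what produces controllability in arbitrarily short time.

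The remaining ingredient is the non-degeneracy of the control operator $G$ defined in \eqref{EQ1}: multiplication by $g$ followed by removal of a mean-type term. Under the hypothesis on $g$ (here $g\equiv\tfrac{1}{2\pi}$, or more generally $|g|\geq g_0>0$ as in Remark \ref{NLcontrol1}), $G$ does not annihilate the relevant modes, and the moment problem is solvable. I expect the genuine obstacle to be the finitely many distinct eigenvalues of multiplicity $m(k_1)>1$ at low frequencies (finite in number by $(H3)$): there several modes share the same time factor $e^{i\lambda_k t}$, so a scalar Ingham bound can no longer separate them and one must instead verify a finite-dimensional, Kalman-type controllability condition guaranteeing that $G$ reaches the whole, at most five-dimensional, eigenspace. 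For $|k|\geq k_1^\ast$ the eigenvalues are simple and the diverging gaps supply the Ingham estimate directly; splicing the two regimes yields a biorthogonal family to $\{e^{i\lambda_k t}\}$ and solves the moment problem. This is precisely the content packaged into \cite[Theorem 1.3 and Remark 1.4]{Pastor and Vielma}; once invoked, the control $h$ is obtained with the asserted bound $\|h\|_{L^2([0,T];H^s_p(\mathbb{T}))}\leq\nu(\|v_0\|_{H^s_p(\mathbb{T})}+\|v_1\|_{H^s_p(\mathbb{T})})$ by the closed-graph argument inherent to the method, while $v\in C([0,T];H^s_p(\mathbb{T}))$ follows from unitarity of $W(t)$ and boundedness of $G$. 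Finally, the compatibility constraint $\widehat{v_0}(0)=\widehat{v_1}(0)$ simply reflects that the zero mode ($\lambda_0=0$) is fixed by the flow and carries no contribution from $G$, since $\widehat{Gh}(0)=0$.
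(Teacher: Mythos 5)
Your proposal is correct and follows essentially the same route as the paper: the paper's ``proof'' of Theorem \ref{ControlLa} consists precisely of verifying $(H1)$--$(H3)$ and the gap condition \eqref{gapcondition} (which is done in the preceding pages) and then invoking \cite[Theorem 1.3 and Remark 1.4]{Pastor and Vielma}, exactly as you do. Your additional commentary on the internal mechanism of that abstract result (moment problem, Ingham-type estimates with diverging gaps, and the finite-dimensional treatment of the finitely many multiple eigenvalues) is consistent with how the cited theorem is proved; the only small imprecision is that Theorem \ref{ControlLa} holds for a general $g$ satisfying \eqref{OPG1} with $\mathrm{supp}(g)=\omega$ an open interval, not only for $g$ bounded away from zero.
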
	

Regarding the stabilization property, the results in \cite[Theorem 1.8]{Pastor and Vielma} implies  the following.

\begin{thm}\label{estabilization}
	Let  $\mu,s\in \mathbb{R}$ be given. Then for any  $\lambda>0$ and $v_{0}\in H_{0}^{s}(\mathbb{T})$ there exists a bounded linear  operator $K_{\lambda}$ acting from $H_{0}^{s}(\mathbb{T})$ into itself such that the unique solution $v$ of the closed-loop system
	\begin{equation*}
		\begin{cases}
			v\in C([0,+\infty);H^{s}_{0}(\mathbb{T})), \hbox{}\\
			\partial_{t}v= - \beta D^{2m}\partial_{x}v-\alpha \mathcal{H}^{2r}\partial_{x}v-2\mu \partial_{x}v+ K_{\lambda}v\in H_{0}^{s-(2m+1)}(\mathbb{T}),  t> 0,\\
			v(0)=v_{0}\in H_{0}^{s}(\mathbb{T}), 
		\end{cases}
	\end{equation*}		
	satisfies
	\begin{equation}\label{decayrate}
	\|v(\cdot,t)\|_{H_{0}^{s}(\mathbb{T})}\leq
	Me^{-\lambda t}\|v_{0}\|_{H_{0}^{s}(\mathbb{T})},
\end{equation}
	for all $t\geq0,$ and some positive constant $M$ depending on $g$, $\lambda$ and $s$.
\end{thm}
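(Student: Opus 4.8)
The plan is to recognize Theorem \ref{estabilization} as a direct instance of the abstract stabilization result \cite[Theorem 1.8]{Pastor and Vielma}, all of whose hypotheses have already been assembled above. First I would record that, by $(H1)$ together with the skew-adjointness \eqref{skew}, the operator $\partial_x\A$ generates a $C_0$-group of isometries $\{S(t)\}_{t\in\Real}$ on each $H_p^s(\mathbb{T})$, diagonalized by $\{\psi_k\}$ with spectrum $\{i\lambda_k\}$. Because the zero Fourier mode is left invariant by the flow, I would restrict everything to the subspace $H_0^s(\mathbb{T})$ and build the feedback there.

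For the construction of $K_\lambda$ I would use the Gramian (Urquiza-type) feedback tuned to the decay rate $\lambda$: introduce the symmetric nonnegative operator
$$
\Lambda_\lambda v := \int_0^{\infty} e^{-2\lambda t}\, S(t)\, G\,G^{*}\, S(t)^{*}\,v\,dt,
$$
which is bounded on $H_0^s(\mathbb{T})$ since $\{S(t)\}$ is an isometry group and the exponential weight forces convergence, and then set $K_\lambda := -G\,G^{*}\,\Lambda_\lambda^{-1}$ (recall that $G=G^{*}$ in $L^2_0(\mathbb{T})$). The decisive fact is that the exact controllability of the pair $(\partial_x\A, G)$, furnished by Theorem \ref{ControlLa}, is equivalent to the coercivity $\Lambda_\lambda \geq c\,I$, so that $\Lambda_\lambda$ is boundedly invertible and $K_\lambda$ is a bounded operator of $H_0^s(\mathbb{T})$ into itself. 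A Lyapunov argument with $\Lambda_\lambda^{-1}$ playing the role of the energy then shows that the closed-loop generator $\partial_x\A+K_\lambda$ produces a semigroup satisfying the estimate \eqref{decayrate} with the prescribed rate $\lambda$.

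The only genuinely nontrivial ingredient, and the step I expect to be the main obstacle, is the coercivity of $\Lambda_\lambda$ -- equivalently, the observability inequality dual to exact controllability -- which rests on an Ingham-type estimate for the exponential family $\{e^{i\lambda_k t}\}_{k}$. The asymptotic gap \eqref{gapcondition}, namely $|\lambda_{k+1}-\lambda_k|\to\infty$, supplies the spectral separation needed for a generalized Ingham inequality at high frequencies, while $(H2)$ and $(H3)$ confine the difficulty to the finitely many repeated or clustered eigenvalues (multiplicity at most $5$, and simple once $|k|\geq k_1^{\ast}$). On those finite-dimensional eigenspaces the plain Ingham inequality must be replaced by its generalized, divided-difference form, and handling this correctly is precisely what is delicate. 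Since this is exactly the scenario treated in \cite[Theorem 1.8]{Pastor and Vielma}, once $(H1)$--$(H3)$ and \eqref{gapcondition} are in place the desired operator $K_\lambda$ and the decay \eqref{decayrate} follow directly.
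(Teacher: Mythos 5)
Your proposal is correct and takes essentially the same route as the paper: the paper's entire proof of Theorem \ref{estabilization} consists of verifying $(H1)$--$(H3)$ and the gap condition \eqref{gapcondition} (done earlier in Section \ref{Linear Systems}) and then invoking \cite[Theorem 1.8]{Pastor and Vielma}, exactly as you do. Your additional sketch of the Gramian feedback $K_\lambda=-GG^{*}\Lambda_\lambda^{-1}$ and the generalized Ingham inequality handling the multiple eigenvalues is a plausible account of the internals of that cited abstract theorem, which the paper itself does not reproduce.
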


\begin{rem}
According to \cite[Theorem 1.7]{Pastor and Vielma} (see also \cite{Slemrod}), the feedback law $-GG^{\ast}v,$ where $G^\ast$ denotes the adjoint operator of $G$, stabilizes the closed-loop system and \eqref{decayrate} holds for some $\lambda_0>0$.
\end{rem}

We finalize this section with the proof of a unique continuation property for the linear equation, which will be used in the proof of our main result. For this, we recall that, in view of ($H3$) there are only
finitely many integers in $\mathbb{I},$ say, $k_{j},$ $j=1,2,\cdots,n_{0}^{\ast},$ for some $n_{0}^{\ast}\in \mathbb{N},$
such that one can find another integer $k\neq k_{j}$
with $\lambda_{k}=\lambda_{k_{j}}$. By defining
$$\mathbb{I}_{j}:=\{k\in \mathbb{Z}: k\neq k_{j}, \lambda_{k}=\lambda_{k_{j}}\},\;\;\;\;j=1,2,\cdots,n_{0}^{\ast},
$$
we obtain the  pairwise disjoint union,
\begin{equation}\label{zdecomp}
	\mathbb{Z}=\mathbb{I}\cup\mathbb{I}_{1}\cup\mathbb{I}_{2}\cup\cdots\cup\mathbb{I}_{n_{0}^{\ast}}.
\end{equation}

Let  $H$ be the closure of ${\text{span}\{e^{-i\lambda_{k}t}: k\in \mathbb{I}\}}$ in $L^{2}([0,T]).$ It is not difficult to show (see Step 2 in the proof of Theorem 1.3 in \cite{Pastor and Vielma}) that
there exists a unique biorthogonal basis, say, $\{q_{j}\}_{j\in \mathbb{I}}\subseteq H$, to $\{e^{-i\lambda_{k}t}\}_{k\in \mathbb{I}}$, which gives
\begin{equation}\label{dualbg}
	(e^{-i\lambda_{k}t}\;,\;q_{j})_{H}=\int_{0}^{T}e^{-i\lambda_{k}t}\overline{q_{j}(t)}\;dt=\delta_{kj},\;\;\;k,j\in \mathbb{I},
\end{equation}
where $\delta_{kj}$ denotes the Kronecker delta. We define the sequence  $q_{j}$ for $j$ running on  $\mathbb{Z}$ as follows: In view of \eqref{zdecomp} we need to define this sequence for indices in $\mathbb{I}_j$, $j=1,\cdots, n_{0}^{\ast}$. But, from  $(H2)$ we see that $\mathbb{I}_{j}$ contains at most $4$ elements. Without loss of generality, we may assume that all multiple eigenvalues have multiplicity $5$ to write
\begin{equation*}
	\mathbb{I}_{j}=\{k_{j,1}, k_{j,2}, k_{j,3}, ,k_{j,4}\},\;\;\;\;j=1,2,\cdots,n_{0}^{\ast}.
\end{equation*}
In what follows we write $k_{j,0}$ for $k_j$. Given $k_{j,l}\in \mathbb{I}_{j}$ we define
$q_{k_{j,l}}$ to be $q_{k_{j,0}}$, so that $\lambda_{k_{j,l}}=\lambda_{k_{j}}$
for any $j=1,2,\cdots,n_{0}^{\ast}$ and $l=0,1,2,3,4.$ 

Now we are in a position to prove the following unique continuation property.
\begin{prop}\label{UCPLgBe}
	Let  $\mu\in \mathbb{R}$ be given. If $v\in C([0,T];H^s_{0}(\mathbb{T}))$, $s\in\mathbb{R}$, is a solution of
	\begin{equation}\label{UCPLgBe1}
		\left \{
		\begin{array}{l l}
			\partial_{t}v+\beta D^{2m}\partial_{x}v+\alpha \mathcal{H}^{2r}\partial_{x}v+2\mu \partial_{x}v
			=0,&  \;(x,t)\in \mathbb{T}\times (0,T),\\
			v(x,t)=0, &\text{a.e.} \;(x,t)\in(a,b)\times(0,T),
		\end{array}
		\right.
	\end{equation}	
	for some $T>0$ and $0\leq a<b\leq 2\pi,$ then $v(x,t)=0$ for almost every $(x,t)\in \mathbb{T}\times (0,T).$
\end{prop}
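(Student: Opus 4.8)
The plan is to diagonalize the evolution in the Fourier variable and then reduce the unique continuation to the elementary fact that a trigonometric polynomial vanishing on an interval must be identically zero.

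First I would write the solution explicitly. Since $v$ solves the linear homogeneous equation, property $(H1)$ shows that each Fourier coefficient satisfies the scalar ODE $\partial_{t}\widehat{v}(k,t)=i\lambda_{k}\widehat{v}(k,t)$, whence
$$v(x,t)=\sum_{k\in\mathbb{Z}}\widehat{v_{0}}(k)\,e^{i\lambda_{k}t}\,e^{ikx},$$
the series converging in $H^{s}_{p}(\mathbb{T})$ for each $t$, with $\widehat{v_{0}}(0)=0$ because $v(\cdot,t)\in H^{s}_{0}(\mathbb{T})$. Next I would regroup the sum according to the partition \eqref{zdecomp} into classes of equal eigenvalues. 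Setting, for $k\in\mathbb{I}$, the trigonometric polynomial $P_{k}(x):=\sum_{\{l\in\mathbb{Z}:\,\lambda_{l}=\lambda_{k}\}}\widehat{v_{0}}(l)\,e^{ilx}$ --- a genuine finite sum, of at most five terms, by $(H2)$ --- the representation becomes
$$v(x,t)=\sum_{k\in\mathbb{I}}P_{k}(x)\,e^{i\lambda_{k}t},$$
now indexed by the \emph{distinct} frequencies $\{\lambda_{k}\}_{k\in\mathbb{I}}$.

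The heart of the argument is to isolate each $P_{k}$ by means of the biorthogonal family from \eqref{dualbg}. Fixing a test function $\varphi\in C^{\infty}_{p}(\mathbb{T})$ supported in $(a,b)$, the hypothesis gives $\langle v(\cdot,t),\varphi\rangle=0$ for a.e.\ $t\in(0,T)$; on the other hand $\langle v(\cdot,t),\varphi\rangle=\sum_{k\in\mathbb{I}}\langle P_{k},\varphi\rangle\,e^{i\lambda_{k}t}$, and the coefficients $\langle P_{k},\varphi\rangle$ decay rapidly in $k$ (the smooth $\varphi$ tested against the at most polynomially bounded $\widehat{v_{0}}$), so the series converges in $L^{2}(0,T)$. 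Multiplying by $q_{j}(t)$, integrating over $[0,T]$ and invoking the conjugated form of \eqref{dualbg}, namely $\int_{0}^{T}e^{i\lambda_{k}t}q_{j}(t)\,dt=\delta_{kj}$, then forces $\langle P_{j},\varphi\rangle=0$ for every $j\in\mathbb{I}$. As $\varphi$ is arbitrary, each $P_{j}$ vanishes on the open interval $(a,b)$. Since $P_{j}$ is a trigonometric polynomial, vanishing on a nonempty open interval forces all of its coefficients to vanish (a nonzero trigonometric polynomial is real-analytic with isolated zeros). Hence $\widehat{v_{0}}(l)=0$ for every $l$ in every eigenvalue-class, i.e.\ for all $l\in\mathbb{Z}$, and therefore $v\equiv0$.

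The delicate points I would treat with care are the justification of the termwise integration against $q_{j}$ (legitimate because the rapid decay of $\langle P_{k},\varphi\rangle$ makes the series absolutely convergent in $L^{2}(0,T)$) and the correct use of the biorthogonal system $\{q_{j}\}_{j\in\mathbb{I}}$; note in particular that the solution carries the factor $e^{i\lambda_{k}t}$, so biorthogonality must be applied in its conjugated form. The finiteness of each $P_{k}$, guaranteed by the multiplicity bound $(H2)$, is exactly what makes the concluding analyticity step applicable, while the very existence of the family $\{q_{j}\}$ rests on the gap condition \eqref{gapcondition} and property $(H3)$ established above.
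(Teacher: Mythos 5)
Your proof is correct and follows essentially the same route as the paper: expand $v$ in the eigenbasis, use the biorthogonal family $\{q_j\}_{j\in\mathbb{I}}$ to isolate the contribution of each distinct eigenvalue, and then exploit the fact that a finite linear combination of exponentials $e^{ilx}$ (finite by $(H2)$) vanishing on $(a,b)$ must have all coefficients zero. The only differences are cosmetic: you test against a smooth $\varphi$ supported in $(a,b)$ and use the conjugated form of \eqref{dualbg}, whereas the paper works pointwise a.e.\ in $x$ and instead performs the substitution $l=-k$ together with $\lambda_{-k}=-\lambda_k$ to apply \eqref{dualbg} directly.
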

\begin{proof}
The proof is similar to that of Proposition 7 in \cite{Flores OH and Smith}; however, since we have the presence of multiples eigenvalues, a more careful analysis must be performed.	Let $v_{0}(x):=v(x,0)$. Because $v_0\in H^s_{0}(\mathbb{T})$, the unique solution of the differential  equation in  \eqref{UCPLgBe1} can be written as 
	$$v(x,t)=\sum_{l\in \mathbb{Z}} e^{i \lambda_{l}t}\widehat{v_{0}}(l)e^{ilx}.$$
The idea is to show that $\widehat{v_0}(l)=0$ for any $l\in\mathbb{Z}$.	First of all, note that for almost every $x\in (a,b)$ and any $n\in\mathbb{Z}$,
	\begin{equation}\label{UCPLgBe2}
			\begin{split}
				0&=\langle v(x,t), q_{n}(t) \rangle_{L^{2}([0,T])}\\
				&=\sum_{l\in \mathbb{Z}}e^{ilx} \widehat{v_{0}}(l)\int_{0}^{T} e^{i \lambda_{l}t} \overline{q_{n}(t)}dt\\
				&=\sum_{k\in \mathbb{Z}}e^{-ikx} \overline{\widehat{v_{0}}(k)}\int_{0}^{T} e^{-i \lambda_{k}t} \overline{q_{n}(t)}dt,
		\end{split}
	\end{equation}
	where we have performed the change of variable $l=-k$ and used the facts that $\lambda_{-k}=-\lambda_{k}$ and $\widehat{v_{0}}(-k)=\overline{\widehat{v_{0}}(k)}$.  Let us analyze  the right-hand side of \eqref{UCPLgBe2} according to $n$. We look for the sum in accordance with the decomposition 
	$$
	\mathbb{Z}=\widetilde{\mathbb{I}}\cup \{k_1,\ldots,k_{n_0^*}\}\cup\mathbb{I}_{1}\cup\mathbb{I}_{2}\cup\cdots\cup\mathbb{I}_{n_{0}^{\ast}},
	$$
	where $\widetilde{\mathbb{I}}:=
	\mathbb{I}-\{k_{1},\ldots,k_{n_{0}^{*}}\}.$
	
	 Assume first $n\in \widetilde{\mathbb{I}}$.  
	Note, if $k$ runs over $\widetilde{\mathbb{I}}$, from \eqref{dualbg}, all terms vanish except the one with index $n$. Also, if $k\in\{k_1,\ldots,k_{n_0^*}\}$ the integral term in \eqref{UCPLgBe2} is zero in view of \eqref{dualbg}. Finally, if $k\in \mathbb{I}_j$ for some $j=1,\ldots,n_0^*$ then $k=k_{j,l}$ for some $l=1,\ldots,4$ and in this case $\lambda_{k_{j,l}}=\lambda_{k_{j,0}}=\lambda_{k_{j}}$ with $k_j\in \{k_{1},\ldots,k_{n_{0}^{*}}\}$. Thus,
	$$
	\int_{0}^{T} e^{-i \lambda_{k}t} \overline{q_{n}(t)}dt=\int_{0}^{T} e^{-i \lambda_{k_j}t} \overline{q_{n}(t)}dt=0,
	$$
	because $n\neq k_j$. Hence, from \eqref{UCPLgBe2} we obtain
		\begin{equation*}
		0=e^{-inx}\overline{\widehat{v_{0}}(n)},\;\text{for almost every}\;x\in(a,b),
	\end{equation*}
	and consequently $\widehat{v_{0}}(n)=0,$ for all $n\in \widetilde{\mathbb{I}}$.
	
	On the other hand, if $n\in\mathbb{I}_{j}\cup\{k_{j}\}$  for some $j\in\{1, \ldots,n_0^*\}$ then $n=k_{j,l_0}$ for some $l_0\in\{0,1,2,3,4\}$. Since $\lambda_{k_{j,l_0}}=\lambda_{k_j}$, a similar analysis as above implies the integral in \eqref{UCPLgBe2} is zero, except for those indices $k$ in  $\mathbb{I}_{j}\cup\{k_{j}\}$. In particular, \eqref{UCPLgBe2} reduces to
	\begin{equation*}
		\displaystyle{0=\sum_{l_{0}=0}^{4} e^{-ik_{j,l_{0}}x}
			\overline{\widehat{v_{0}}(k_{j,l_{0}})}},\;\text{for almost every}\;x\in(a,b).
	\end{equation*}
	Since, all elements in the set $\left\{ k_{j,l_{0}}\in \mathbb{Z}: l_{0}=0,1,2,3,4\right\}$ are distint and the functions $$\left\{ e^{-ik_{j,l_{0}}x}: l_{0}=0,1,2,3,4\right\}$$ are linearly independent we conclude that $\widehat{v_{0}}(k_{j,l_{0}})=0,$ for such
	$j\in\{1, \ldots,n_0^*\}$ and all $l_{0}\in\{0,1,2,3,4\}$. Hence, taking the index $n$ over sets of the form  $\mathbb{I}_{j}\cup\{k_{j}\}$ for $j=1,2\cdots,n_{0}^{\ast}$ one can show that $\widehat{v_{0}}(k_{j,l_{0}})=0,$ for all
	$j\in\{1, \ldots,n_0^*\}$ and  $l_{0}\in\{0,1,2,3,4\}$, which gives that $\widehat{v}(l)=0,$ for all $l\in\mathbb{Z}-\widetilde{\mathbb{I}}.$ Therefore $v(x,t)=0$ for almost every $(x,t)\in \mathbb{T}\times (0,T)$ and the proof of the proposition is completed.
\end{proof}

\section{Proof of Theorem \ref{NLcontrol}} \label{Proof}
In this section we give the proof of Theorem  \ref{NLcontrol}, which is similar to that of \cite[Theorem 1.1]{10} (see also \cite[Proposition 1.1 ]{1}).

\begin{proof}[Proof of Theorem \ref{NLcontrol}]
Assume $u_{0},u_{1}\in H_{p}^{s+1}(\mathbb{T})$ satisfy $[u_{0}]=[u_{1}].$ From Theorem \ref{ControlLa} with $\mu \equiv 0$ there exists $h_{1}\in L^{2}(0,T; H_{p}^{s+1}(\mathbb{T}))$ such that the solution
	$u\in C([0,T];H^{s+1}_{p}(\mathbb{T}))$ of the linear IVP
	\begin{equation*}\label{con1}
		\begin{cases}
			\partial_{t}u= -\beta D^{2m}\partial_{x}u-\alpha \mathcal{H}^{2r}\partial_{x}u
			+ Gh_{1}\in H_{p}^{s+1-(2m+1)}(\mathbb{T}),\;  t\in (0,T),\\
			u(0)=u_{0}, 
		\end{cases}
	\end{equation*}
	fulfills $u(T)=u_{1}.$ Note that the nonlinear term $\partial_{x}(u^2)$ belongs to the space $L^{2}(0,T; H_{p}^{s}(\mathbb{T})).$ Adding this term to both sides of the above equation, one obtains
	\begin{equation*}\label{con2}
		\begin{cases}
			\partial_{t}u +\beta D^{2m}\partial_{x}u+\alpha \mathcal{H}^{2r}\partial_{x}u +\partial_{x}(u^2)=
			G(h_{1})  +\partial_{x}(u^2),\;\; x\in\mathbb{T},\; t\in (0,T),\\
			u(0)=u_{0}, \;\;u(T)=u_{1}.
		\end{cases}
	\end{equation*}
	Thus, it is enough to show that there exists  $h_{2}\in L^{2}(0,T; H_{p}^{s}(\mathbb{T}))$ such that
	\begin{equation}\label{con3}
		\partial_{x}(u^2)(x,t)=G(h_{2})(x,t),\;\text{for all}\;x\in\mathbb{T},\;\text{and}\;t\in (0,T).
	\end{equation}
By the definition of operator $G$ (see \eqref{Gdef} and \eqref{NLcontro2}), equation \eqref{con3} reduces to
	\begin{equation}\label{con4}
			2\pi \partial_{x}(u^2)(x,t)=h_{2}(x,t)-\frac{1}{2\pi}\int_{\mathbb{T}}h_{2}(y,t)dy.
	\end{equation}

	Next, we will show the existence of $h_{2}$.	We start by defining the map $\tilde{G}:H_p^{s}(\mathbb{T})\longrightarrow H_p^{s}(\mathbb{T})$ as
	$$\tilde{G}v(x)=v(x)-\frac{1}{2\pi}\int_{\mathbb{T}}v(y)dy.$$
It is not difficult to check that  $\tilde{G}$ is a self-adjoint bounded linear operator. In addition,
	\begin{equation}\label{Kerdef}
		\text{Ker}(\tilde{G}):=\left\{v\in H_p^{s}(\mathbb{T}): \;\tilde{G}(v)=0\right\}=\text{span}\{1\}
	\end{equation}
	and
	$$
	\text{R}(I-\tilde{G}):=\left\{u\in H_p^{s}(\mathbb{T}): \; u=(I-\tilde{G})v,\; \mbox{for some}\; v\in H_p^{s}(\mathbb{T}) \right\}=\text{span}\{1\}.
	$$
Since $\text{dim}(\text{R}(I-\tilde{G}))$ is finite, we have  that $I-\tilde{G}$ is compact (see, for instance, \cite[page 157]{Brezis}). Therefore, $\tilde{G}$ is a Fredholm operator of index zero (see, for instance, \cite[page 168]{Brezis}), which implies that $\text{R}(\tilde{G})$ is closed in $H_p^{s}(\mathbb{T})$.
	Thus, the Fredholm alternative (see \cite[Theorem 6.6]{Brezis}) implies that
	\begin{equation}\label{con6}
		\overline{\text{R}(\tilde{G})}=\text{R}(\tilde{G})= \text{R}(I-(I-\tilde{G}))=\text{Ker}(I-(I-\tilde{G})^{\ast})^{\bot}
		=\text{Ker}(\tilde{G}^{\ast})^{\bot}
		=\text{Ker}(\tilde{G})^{\bot}.
	\end{equation}
	In view of \eqref{Kerdef} and \eqref{con6} it follows that $\tilde{G}:\text{Ker}(\tilde{G})^{\bot}\to \text{Ker}(\tilde{G})^{\bot}$ is invertible with a bounded inverse.
	
Note that for any $t\in (0,T),$
\[
		\begin{split}
			\left(2\pi \partial_{x}(u^2),1\right)_{H_p^{s}(\mathbb{T})}
			=2\pi  \widehat{\partial_{x}(u^{2})}(0)
			=2\pi \int_{\mathbb{T}}\partial_{x}(u^{2})dx
			=0,
	\end{split}\]
	which yields 
	$$2\pi \partial_{x}(u^2)\in \text{Ker}(\tilde{G})^{\bot}=\text{R}(\tilde{G}),\;\;\text{for any}\;t\in (0,T).$$
Therefore, equation \eqref{con4} has a solution $h_{2}(\cdot,t)\in H_{p}^{s}(\mathbb{T}),$ for each fixed $t\in (0,T).$ Since $\tilde{G}^{-1}$ is  bounded, we obtain that
	$h_{2}\in L^{2}(0,T;H_{p}^{s}(\mathbb{T})).$
	Hence, considering the control function $h=h_{1}+h_{2},$ we complete the proof of the theorem.
\end{proof}	

\section{Linear and nonlinear estimates}\label{Bourgspace}
In this section we introduce the Fourier transform  restriction spaces, the so-called dissipation-normalized Bourgain's space and derive some preliminary linear and nonlinear estimates in order to prove our global well-posedness and stabilization results for the DGB equation.

\subsection{Dissipation-normalized Bourgain's space}\label{Bourgspace1}

For given $b\in \mathbb{R}$ and $\delta>0$ we define the dissipation-normalized Bourgain's space $Z^{b}$ associated to the DGB equation \eqref{GB6} on $\mathbb{T}$ as the closure of the Schwartz space $\mathcal{S}(\mathbb{T}\times \mathbb{R})$ under the norm
\begin{equation*}
\displaystyle{\|v\|_{Z^{b}}}:=	
\begin{cases}
\displaystyle{\left(\sum_{k\in \mathbb{Z}^{\ast}}\int_{\mathbb{R}}\langle k \rangle^{2b\delta} \left\langle\frac{\tau-\lambda_{k}} {\langle k\rangle^{\delta}} \right\rangle^{2b} |\widehat{v}(k,\tau)|^{2} d\tau\right)^{\frac{1}{2}},\;\;\text{if} \; b\in \left(-\frac{1}{2},\frac{1}{2}\right)},\\
\displaystyle{ \left(\sum_{k\in \mathbb{Z}^{\ast}}\int_{\mathbb{R}}\langle k \rangle^{\text{sgn}(b)\delta} \left\langle\frac{\tau-\lambda_{k}} {\langle k\rangle^{\delta}} \right\rangle^{2b} |\widehat{v}(k,\tau)|^{2} d\tau\right)^{\frac{1}{2}},\;\;\text{if} \; b\notin \left(-\frac{1}{2},\frac{1}{2}\right)}	,
\end{cases}
\end{equation*}
where  $ \mathbb{Z}^{\ast}:=\mathbb{Z}-\{0\},$ $\langle \cdot \rangle:=(1+|\cdot|^{2})^{\frac{1}{2}}$,
\begin{equation}\label{symboldef}
	\lambda_{k}:=-\beta k|k|^{2m}+ \alpha k |k|^{2r}-2\mu k, \quad k\in\mathbb{Z},
\end{equation}
 and $\widehat{v}(k,\tau)$ denotes the Fourier transform of $v$ with respect to both space and time variables, namely,
$$\displaystyle{\widehat{v}(k,\tau):=\frac{1}{2\pi}\int_{\mathbb{R}}\int_{\mathbb{T}}v(x,t)e^{-i(t\tau+kx)}dx dt}.$$
Sometimes we use $\widehat{v}(k,t)$ (respectively $\widehat{v}(x,\tau)$) to denote the Fourier transform in space variable $x$ (respectively in time variable $t$).
It is easy to verify that $Z^{b}$ is  a Hilbert space with the natural inner product.

For a given interval $I\subset\mathbb{R}$, we define $Z^{b}(I)$ to be the restriction of $Z^{b}$ to the interval $I$ with norm 
\begin{equation*}
	\|f\|_{Z^{b}(I)}:=\inf\left\{\|\widetilde{f}\|_{Z^{b}}: \widetilde{f}=f\;\text{on}\;\mathbb{T}\times I\right\}.
\end{equation*}
If $I=[0,T],$ for simplicity, we denote $Z^{b}(I)$ by $Z^{b}_{T}.$

\begin{rem}\label{Bnorm3}
As we already said,	motivated by the usual Bourgain spaces (see \cite{13}), the authors in
	\cite{Flores OH and Smith} introduced this Bourgain-type weighted space to show that the  dispersion generalized BO equation \eqref{GBO} is stabilizable in $L^{2}_{0}(\mathbb{T}).$ The advantage that the  spaces $Z^{b}$ offers is that smoothing can be gained from both dissipation and dispersion simultaneously as we will se below.
\end{rem}

In what follows we recall some properties of the spaces $Z^{b}.$ The ideas to prove them are similar to those derived for the usual Bourgain spaces (see \cite[Section 3]{Flores OH and Smith} and \cite[Section 2.6]{11}).

\begin{prop}[Properties of Bourgain's spaces]\label{prop2}
	Let $I\subset\mathbb{R}$ be an interval and $\delta,b,b'\in\mathbb{R}$ with $\delta>0$. 
	\begin{itemize}
		\item[(i)] If $b'\leq b$ then $Z^{b}$ (resp. $Z^{b}(I)$) is continuously embedded in $Z^{b'}$ (resp. $Z^{b'}(I)$).
		\item[(ii)] The space $Z^{b}$ is reflexive and its dual is given by $Z^{-b}.$
		\item[(iii)] If  $b>\frac{1}{2}$ then $Z^{b}$ (resp. $Z^{b}(I)$) is continuously embedded in the space $C(\mathbb{R};L^{2}_{0}(\mathbb{T})\cap L^{2}(\mathbb{R};H_{0}^{\frac{\delta}{2}}(\mathbb{T}))$ (resp. $C(I;L^{2}_{0}(\mathbb{T})\cap L^{2}([0,T];H_{0}^{\frac{\delta}{2}}(\mathbb{T}))$). Furthermore, there exists a positive constant $C$ depending only on $b$ such that
		$$\|v\|_{C(\mathbb{R};L^{2}_{0}(\mathbb{T}))} \leq C\|v\|_{Z^{b}}.$$
	\end{itemize}
\end{prop}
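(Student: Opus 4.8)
The plan is to observe that, up to renaming, each $Z^{b}$ is simply a weighted $L^{2}$ space on $\mathbb{Z}^{\ast}\times\mathbb{R}$. Writing $\sigma_{k}(\tau):=(\tau-\lambda_{k})/\langle k\rangle^{\delta}$, the norm reads $\|v\|_{Z^{b}}^{2}=\sum_{k\in\mathbb{Z}^{\ast}}\int_{\mathbb{R}}w_{b}(k,\tau)\,|\widehat{v}(k,\tau)|^{2}\,d\tau$ with weight $w_{b}(k,\tau)=\langle k\rangle^{p(b)}\langle\sigma_{k}(\tau)\rangle^{2b}$, where $p(b)=2b\delta$ for $|b|<\tfrac12$ and $p(b)=\operatorname{sgn}(b)\,\delta$ for $|b|\geq\tfrac12$. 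The exponent $p$ is exactly the clamp of $2b\delta$ to the interval $[-\delta,\delta]$, so it is continuous, nondecreasing in $b$, and odd, i.e. $p(-b)=-p(b)$. With this reformulation all three assertions reduce to elementary facts about such weighted spaces, and I would verify them in turn.

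For (i), since $b'\leq b$ forces $p(b')\leq p(b)$ and $2b'\leq 2b$, while $\langle k\rangle\geq1$ and $\langle\sigma_{k}\rangle\geq1$, one has the pointwise comparison $w_{b'}(k,\tau)=\langle k\rangle^{p(b')-p(b)}\langle\sigma_{k}\rangle^{2b'-2b}\,w_{b}(k,\tau)\leq w_{b}(k,\tau)$; summing and integrating gives $\|v\|_{Z^{b'}}\leq\|v\|_{Z^{b}}$. The embedding for the restriction spaces $Z^{b}(I)$ then follows at once from the definition of $\|\cdot\|_{Z^{b}(I)}$ as an infimum over extensions. For (ii), $Z^{b}$ is a Hilbert space (as already noted in the text) and hence reflexive; the identification $(Z^{b})'\cong Z^{-b}$ is realized through the unweighted space-time $L^{2}$ pairing. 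The crucial identity is $w_{b}\,w_{-b}\equiv1$, which holds because $p(-b)=-p(b)$ and the $\langle\sigma_{k}\rangle$-exponents are opposite. Thus for $v\in Z^{-b}$, Cauchy-Schwarz with the splitting $w_{b}^{1/2}\cdot w_{b}^{-1/2}=w_{b}^{1/2}\cdot w_{-b}^{1/2}$ yields $|\langle u,v\rangle_{L^{2}}|\leq\|u\|_{Z^{b}}\|v\|_{Z^{-b}}$, while conversely representing an arbitrary $\ell\in(Z^{b})'$ via Riesz as $(\,\cdot\,,w)_{Z^{b}}$ and setting $\widehat{v}:=w_{b}\widehat{w}$ exhibits $\ell$ as pairing against $v\in Z^{-b}$ and shows the correspondence is an isometric isomorphism.

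For (iii), fix $b>\tfrac12$, so that $w_{b}=\langle k\rangle^{\delta}\langle\sigma_{k}\rangle^{2b}$. The bound into $L^{2}(\mathbb{R};H_{0}^{\delta/2}(\mathbb{T}))$ is immediate: since $\langle\sigma_{k}\rangle^{2b}\geq1$, Plancherel in $t$ gives $\|v\|_{L^{2}(\mathbb{R};H_{0}^{\delta/2})}^{2}\sim\sum_{k}\langle k\rangle^{\delta}\int_{\mathbb{R}}|\widehat{v}(k,\tau)|^{2}\,d\tau\leq\|v\|_{Z^{b}}^{2}$. For the $C(\mathbb{R};L^{2}_{0})$ bound I would estimate, for each $k$ and $t$, $|\widehat{v}(k,t)|\leq\int_{\mathbb{R}}|\widehat{v}(k,\tau)|\,d\tau$ and apply Cauchy-Schwarz against $\langle\sigma_{k}\rangle^{-2b}$; the substitution $s=\sigma_{k}(\tau)$ produces the factor $\langle k\rangle^{\delta}\int_{\mathbb{R}}\langle s\rangle^{-2b}\,ds$, whose integral is finite precisely because $b>\tfrac12$. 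This gives $|\widehat{v}(k,t)|^{2}\lesssim\int_{\mathbb{R}}\langle k\rangle^{\delta}\langle\sigma_{k}\rangle^{2b}|\widehat{v}(k,\tau)|^{2}\,d\tau$ uniformly in $t$; summing over $k\in\mathbb{Z}^{\ast}$ (which simultaneously yields $\widehat{v}(0,t)=0$, i.e. mean zero) produces $\sup_{t}\|v(t)\|_{L^{2}_{x}}^{2}\lesssim\|v\|_{Z^{b}}^{2}$. Continuity in $t$ then follows by approximating $v$ in $Z^{b}$ by Schwartz functions, for which $t\mapsto v(t)$ is manifestly continuous, and using that a uniform limit of continuous $L^{2}_{0}$-valued maps is continuous; the corresponding statements for $Z^{b}(I)$ are obtained by restricting an almost-optimal extension to $I$.

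Since all three parts are structurally routine weighted-$L^{2}$ and classical Bourgain-space computations, I do not expect a genuine obstacle, only bookkeeping that must be handled with care. The delicate points are: tracking the piecewise definition of $w_{b}$, for which the monotonicity and oddness of the clamped exponent $p(b)$ and the duality identity $w_{b}\,w_{-b}\equiv1$ must be checked carefully; and, in (iii), recognizing that the factor $\langle k\rangle^{\delta}$ generated by the $\sigma_{k}$-normalization of the time-frequency is exactly the weight already present in $w_{b}$ when $b>\tfrac12$. This last matching is precisely the mechanism by which the space encodes the $H^{\delta/2}$ smoothing coming from dispersion and dissipation together, as advertised in Remark \ref{Bnorm3}.
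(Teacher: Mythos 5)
Your proof is correct and follows the same standard weighted-$L^{2}$/Bourgain-space arguments that the paper merely cites (it declares (i) clear, refers to Tao's book for (ii), and to Proposition 1 of Flores--Oh--Smith for (iii) without giving details); in particular your identification of the weight exponent as the clamp of $2b\delta$ to $[-\delta,\delta]$ and the matching of the Jacobian factor $\langle k\rangle^{\delta}$ from the substitution $s=\sigma_{k}(\tau)$ with the spatial weight already present when $b>\tfrac12$ are exactly the intended mechanisms. The one point worth flagging is that the paper singles out the oddness $\lambda_{-k}=-\lambda_{k}$ as the key fact behind (ii): if the duality is realized, as it is later in \eqref{bilin1}, through the real bilinear pairing $\int uv\,dx\,dt$ rather than the sesquilinear one, one needs $w_{-b}(-k,-\tau)=w_{-b}(k,\tau)$, which is precisely where that oddness enters; your Riesz-representation route yields the conjugate-linear identification and should be composed with complex conjugation (norm-preserving by the same oddness) to recover the paper's convention.
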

\begin{proof}
	Part (i) is clear. Part (ii) follows from the fact that the symbol $\lambda_{k}$ in \eqref{symboldef} is an odd function of $k$ (see \cite[Section 2.6]{11}). Finally, part (iii) follows from the definition of the norm in $Z^{b}$ and an argument similar to the one in the proof of Proposition 1 in \cite{Flores OH and Smith}, so we omit the details.
\end{proof}

\subsection{Linear and integral estimates}\label{linint}
To derive some key linear and integral estimates we follow a similar approach as in 
\cite[Section 2.3]{Flores OH and Smith}. Let us start by decomposing
the localized damping $GD^{\delta}G$  ($\delta>0$)  present in 
equation \eqref{GB6} as follows: using the definition of $G$ we write
$$
GD^{\delta}Gv=gD^{\delta}(gv)-\frac{1}{2\pi}\int_{\mathbb{T}}g(y)D^{\delta}(gv)(y)dy+Rv=:\mathcal{B}v+Rv,
$$
where
\begin{equation}\label{locdamp3}
	{
		\begin{split}
			(Rv)(x)&:=\frac{1}{2\pi}\int_{\mathbb{T}}g(y) D^{\delta}(gv)(y)dy-\left(\int_{\mathbb{T}}g(y)v(y)dy\right) g(x)D^{\delta}(g)(x)\\
			&\qquad -\left(\int_{\mathbb{T}} g(y)D^{\delta}(gv)(y)dy\right) g(x) + \left( \int_{\mathbb{T}}g(z)v(z)dz\right) \left(\int_{\mathbb{T}}g(y) D^{\delta}(g)(y)dy \right) g(x).
	\end{split}}
\end{equation}
Note that $\widehat{\mathcal{B}v}(0)=0$; so the contribution of this term occurs only for $k\neq0$. In addition, using the definition of $D^\delta$ we may decompose
$$
\mathcal{B}v=\widetilde{D}^{\delta}v+N_{1}v,
$$
where $\widetilde{D}^{\delta}$ and $N_1$ are Fourier multipliers with symbols given, respectively, by
\[
	d(k):=
\begin{cases}
{\displaystyle \sum_{l\in \mathbb{Z}}|l|^{\delta}|\widehat{g}(l-k)|^{2}, \quad k\in\mathbb{Z}^{\ast}},\\
0, \qquad k=0,
\end{cases}
\]
and 
\[
c(k):=
\begin{cases}
{\displaystyle\sum_{l\in \mathbb{Z}} \sum_{\substack{n\in \mathbb{Z}\\
		n\neq k}} |l|^{\delta}\widehat{g}(k-l)\widehat{g}(l-n) \widehat{v}(n), \quad k\in\mathbb{Z}^{\ast}},\\
	0, \qquad k=0.
\end{cases}
\]
Thus, we may write
\begin{equation}\label{locdamp}
	GD^{\delta}Gv=\widetilde{D}^{\delta}v+N_{1}v+Rv.
\end{equation}

Some properties of the above operators are given next. The first result says that operator $ \widetilde{D}^{\delta} $ behaves like a derivative of order $\delta.$
\begin{lem}\label{locdamp4}
	Let $\delta>0$ be given and assume that $g$ satisfies \eqref{OPG1}.
	For any $k\in \mathbb{Z}^{\ast}$ there exist $c=c(\delta)$ and $C=C(\delta,g)$ (uniform constants) such that
	$\displaystyle{c \langle k \rangle^{\delta}  \leq {d}(k)\leq   C \langle k \rangle^{\delta}},$
	that is, $$\displaystyle{{d}(k)\sim_{\delta,g} \langle k \rangle^{\delta}}.$$
\end{lem}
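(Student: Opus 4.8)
The plan is to rewrite the symbol by the change of summation index $n=l-k$, which turns $d(k)=\sum_{l\in\mathbb{Z}}|l|^{\delta}|\widehat{g}(l-k)|^{2}$ into $d(k)=\sum_{n\in\mathbb{Z}}|n+k|^{\delta}|\widehat{g}(n)|^{2}$, and then to bound this last series from above and below by multiples of $\langle k\rangle^{\delta}$. The only features of $g$ that I intend to use are its smoothness, which forces $\widehat{g}$ to decay faster than any polynomial, and the normalization \eqref{OPG1}, which pins down $\widehat{g}(0)=[g]=\tfrac{1}{2\pi}$.

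For the upper bound I would first invoke the Peetre-type inequality $\langle n+k\rangle\leq\sqrt{2}\,\langle n\rangle\langle k\rangle$ together with $|n+k|^{\delta}\leq\langle n+k\rangle^{\delta}$ (valid since $\delta>0$), which yields $|n+k|^{\delta}\leq 2^{\delta/2}\langle n\rangle^{\delta}\langle k\rangle^{\delta}$. Feeding this into the series gives $d(k)\leq 2^{\delta/2}\langle k\rangle^{\delta}\sum_{n\in\mathbb{Z}}\langle n\rangle^{\delta}|\widehat{g}(n)|^{2}$, and since $g\in C^{\infty}_{p}(\mathbb{T})$ the remaining series converges to a finite constant $C_{g}:=\sum_{n\in\mathbb{Z}}\langle n\rangle^{\delta}|\widehat{g}(n)|^{2}$. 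This simultaneously shows that $d(k)$ is well defined and produces the upper estimate with $C=C(\delta,g)=2^{\delta/2}C_{g}$.

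For the lower bound the key observation is that every term of the series is nonnegative, so I may discard all of them except the one indexed by $n=0$; this leaves $d(k)\geq|k|^{\delta}|\widehat{g}(0)|^{2}=\tfrac{1}{4\pi^{2}}|k|^{\delta}$, where I used $\widehat{g}(0)=\tfrac{1}{2\pi}$ coming from \eqref{OPG1}. Finally, because $k\in\mathbb{Z}^{\ast}$ forces $|k|\geq1$, one has $\langle k\rangle=(1+k^{2})^{1/2}\leq\sqrt{2}\,|k|$, hence $|k|^{\delta}\geq 2^{-\delta/2}\langle k\rangle^{\delta}$, and therefore $d(k)\geq\tfrac{1}{4\pi^{2}}2^{-\delta/2}\langle k\rangle^{\delta}$, i.e. the lower estimate with $c=c(\delta)=\tfrac{1}{4\pi^{2}}2^{-\delta/2}$.

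There is no serious obstacle in this argument; the one point that genuinely deserves care is the assertion, recorded in the statement, that the lower constant $c$ depends on $\delta$ alone. This is precisely why I keep only the $n=0$ term: the sole information about $g$ entering the lower bound is the value $\widehat{g}(0)=\tfrac{1}{2\pi}$, which is universal under the normalization \eqref{OPG1}, so no further dependence on $g$ is introduced. The upper constant, in contrast, does depend on $g$ through the tail sum $C_{g}$, which matches the stated $C=C(\delta,g)$.
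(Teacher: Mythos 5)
Your proof is correct, and it is essentially the standard argument behind Claim 1 of Flores--Oh--Smith, to which the paper delegates this lemma: the lower bound comes from retaining only the $l=k$ term (i.e.\ $n=0$ after your substitution), where the normalization \eqref{OPG1} gives $\widehat{g}(0)=\tfrac{1}{2\pi}$ and hence a constant depending on $\delta$ alone, while the upper bound uses Peetre's inequality and the rapid decay of $\widehat{g}$ coming from the smoothness of $g$. No gaps; your attention to why $c$ depends only on $\delta$ is exactly the right point to flag.
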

\begin{proof}
	See proof of Claim 1 in \cite{Flores OH and Smith}.
\end{proof}

\begin{lem}\label{locdamp5}
	Let $\delta>0$ be given and $g$ as in \eqref{OPG1}.	The linear operator $R$ defined in \eqref{locdamp3} is bounded from $L^{2}_{p}(\mathbb{T})$ into itself. Furthermore, there exists a positive constant $C=C(\delta,g)$ such that
	$$\|Rv\|_{L^{2}_{p}(\mathbb{T})}\leq C \|v\|_{L^{2}_{p}(\mathbb{T})},$$
	for all $v\in L^{2}_{p}(\mathbb{T}).$
\end{lem}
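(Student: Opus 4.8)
The plan is to treat $R$ term by term using the decomposition \eqref{locdamp3}, which exhibits $Rv$ as a sum of four contributions, each of the form (a scalar linear functional of $v$) times (a fixed function of $x$). The central observation is that although $D^{\delta}$ is an unbounded operator of positive order $\delta$ on $L^{2}_{p}(\mathbb{T})$, in every term it appears either applied directly to the smooth weight $g$ (as in $D^{\delta}g$) or integrated against $g$ (as in $\int_{\mathbb{T}}g(y)D^{\delta}(gv)(y)\,dy$); in the latter situation I would transfer the operator onto $g$ by self-adjointness, so that it never acts directly on the rough factor $v$.

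First I would record that $D^{\delta}$, having the real even symbol $|k|^{\delta}$, is self-adjoint on $L^{2}_{p}(\mathbb{T})$, so that for real-valued $g,v$ one has $\int_{\mathbb{T}}g\,D^{\delta}(gv)\,dy=\int_{\mathbb{T}}(D^{\delta}g)(gv)\,dy$. Since $g\in C^{\infty}_{p}(\mathbb{T})$, the function $D^{\delta}g$ is again smooth, hence lies in $L^{\infty}(\mathbb{T})\subset L^{2}_{p}(\mathbb{T})$, while $gv\in L^{2}_{p}(\mathbb{T})$ with $\|gv\|_{L^{2}_{p}(\mathbb{T})}\le\|g\|_{L^{\infty}(\mathbb{T})}\|v\|_{L^{2}_{p}(\mathbb{T})}$. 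The Cauchy--Schwarz inequality then yields
$$
\left|\int_{\mathbb{T}}g(y)\,D^{\delta}(gv)(y)\,dy\right|\le \|D^{\delta}g\|_{L^{2}_{p}(\mathbb{T})}\,\|g\|_{L^{\infty}(\mathbb{T})}\,\|v\|_{L^{2}_{p}(\mathbb{T})}=:C_{1}(\delta,g)\|v\|_{L^{2}_{p}(\mathbb{T})},
$$
which controls the scalar appearing in the first and third terms of \eqref{locdamp3}. The scalar $\int_{\mathbb{T}}g\,v\,dy$ occurring in the second and fourth terms is even simpler, being bounded by $\|g\|_{L^{2}_{p}(\mathbb{T})}\|v\|_{L^{2}_{p}(\mathbb{T})}$.

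Next I would observe that the $x$-dependent factors multiplying these scalars, namely the constant function, $g$, and $g\,D^{\delta}g$, are all fixed elements of $L^{2}_{p}(\mathbb{T})$ (indeed smooth), with norms depending only on $\delta$ and $g$; and the leftover scalar $\int_{\mathbb{T}}g\,D^{\delta}g\,dy$ in the fourth term is a fixed constant determined by $g$ and $\delta$. Collecting these bounds and applying the triangle inequality over the four terms of \eqref{locdamp3} gives $\|Rv\|_{L^{2}_{p}(\mathbb{T})}\le C(\delta,g)\|v\|_{L^{2}_{p}(\mathbb{T})}$, as claimed.

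The only point requiring care---and the step I would single out as the crux---is the treatment of $D^{\delta}(gv)$: a naive estimate is hopeless, since $gv$ need not belong to $H^{\delta}_{p}(\mathbb{T})$ when $v$ is merely in $L^{2}_{p}(\mathbb{T})$, and hence $\|D^{\delta}(gv)\|_{L^{2}_{p}(\mathbb{T})}$ cannot be bounded by $\|v\|_{L^{2}_{p}(\mathbb{T})}$. The resolution, and precisely the structural reason the lemma holds, is that $D^{\delta}(gv)$ never appears in isolation but always integrated against the smooth function $g$, permitting the derivative to be moved onto $g$ by self-adjointness and thereby producing only a smooth, bounded quantity. Everything else is routine.
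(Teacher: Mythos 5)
Your proof is correct and follows essentially the same route as the paper, which simply invokes Cauchy--Schwarz and Parseval: transferring $D^{\delta}$ onto the smooth weight $g$ via self-adjointness is exactly the Fourier-side manipulation $\sum_{k}\widehat{g}(k)\,|k|^{\delta}\,\widehat{gv}(-k)$ that Parseval's identity provides, after which Cauchy--Schwarz bounds each scalar coefficient and the four terms of \eqref{locdamp3} are summed. You also correctly identify the one point needing care, namely that $D^{\delta}(gv)$ is only a distribution for $v\in L^{2}_{p}(\mathbb{T})$ and must always be paired against $g$.
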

\begin{proof}
The proof follows as an application of Cauchy-Schwarz's inequality and Parseval's identity.
\end{proof}

In view of the decomposition \eqref{locdamp}, system \eqref{GB6} can be rewritten in the form
\begin{equation}\label{GB7}
	\begin{cases}
		\partial_{t}v+\beta D^{2m}\partial_{x}v+\alpha \mathcal{H}^{2r}\partial_{x}v+2\mu  \partial_{x}v+\widetilde{D}^{\delta}v=- \partial_{x}(v^{2})-N_{1}v-Rv, \quad x\in \mathbb{T},\; t>0,\\
		v(x,0)=v_{0}, 
	\end{cases}
\end{equation}
so that the terms $N_{1}v$ and $Rv$ will be treated as nonlinear ones. Next, we  rewrite  \eqref{GB7} in its equivalent integral formulation, namely,
\begin{equation*}
	v(t)=S_{\mu}(t)v_{0}-\int_{0}^{t}S_{\mu}(t-s)\left( \partial_{x}(v^{2})+N_{1}v+Rv\right)(s)ds,
\end{equation*}
where, for any $t\in \mathbb{R},$ $S_{\mu}(t)$ is defined for any $v_{0}\in L^{2}_{p}(\mathbb{T})$ by
\begin{equation}\label{GB9}
	S_{\mu}(t)v_{0}:=e^{(-\beta D^{2m}\partial_{x}- \alpha \mathcal{H}^{2r}\partial_{x}-2\mu \partial_{x})t-\widetilde{D}^{\delta}|t|}v_{0}
	=\left(e^{i\lambda_{k}t-{d}(k)|t|}\widehat{v_{0}}(k)\right)^{\vee}.
\end{equation}

It must be clear that to prevent a backward parabolic propagation, the absolute value was placed around time variables associated with the dissipative coefficients ${d}(k)$ in \eqref{GB9}. It is easy to show that the family of operators $\{S_{\mu}(t)\}_{t\geq 0}$ defines a strongly continuous one-parameter semigroup of contractions on $L^{2}_{p}(\mathbb{T}).$ Its infinitesimal generator $\widetilde{\mathcal{A}}: \mathcal{D}(\widetilde{\mathcal{A}}) \subset L^{2}_{p}(\mathbb{T}) \to L^{2}_{p}(\mathbb{T})$
defined by $\widetilde{\mathcal{A}} \varphi:= -\beta D^{2m}\partial_{x}\varphi- \alpha \mathcal{H}^{2r}\partial_{x}\varphi-2\mu \partial_{x}\varphi-\widetilde{D}^{\delta}\varphi$ has domain $\mathcal{D}(\widetilde{\mathcal{A}})=H^{s_{0}}_{p}(\mathbb{T}),$ with $s_{0}=\max\{2m+1,\delta\}.$

To derive some  estimates localized in time variable, we introduce a cut-off function
$\eta\in C^{\infty}_{c}(\mathbb{R})$  such that $\eta\equiv1,$ if $t\in[-1,1]$ and $\eta\equiv0$,\;if $t \notin (-2,2).$
For $T>0$ given, we define $\eta_T \in C^{\infty}_{c}(\mathbb{R})$ by $$\eta_T(t):=\eta\left(\frac{t}{T}\right).$$

The ideas to prove the following three results are similar to those derived in the dispersion generalized Benjamin-Ono case.

\begin{prop}\label{locdamp6}
	Let $\delta>0,$ $b<\frac{3}{2}$ and $g$ as in \eqref{OPG1}. Then for any $v_{0}\in L^{2}_{0}(\mathbb{T})$	 there exists a positive constant $C=C(\delta,b,g)$ such that
	$$\|S_{\mu}(t)(v_{0})\|_{Z^{b}}\leq C \|v_{0}\|_{L^{2}_{0}(\mathbb{T})}.$$
\end{prop}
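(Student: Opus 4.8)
The plan is to exploit the explicit diagonalization of the semigroup $S_\mu(t)$ in \eqref{GB9} and to reduce the whole estimate to a single one-dimensional integral in the time-frequency variable. First I would compute the full space-time Fourier transform of $S_\mu(\cdot)v_0$. Since $\widehat{S_\mu(t)v_0}(k)=e^{i\lambda_k t-d(k)|t|}\widehat{v_0}(k)$ and, because $v_0\in L^2_0(\mathbb{T})$, only the modes $k\in\mathbb{Z}^\ast$ (where $d(k)>0$) contribute, the map $t\mapsto S_\mu(t)v_0$ decays exponentially as $|t|\to\infty$; hence no temporal cut-off is needed and the Fourier transform in $t$ over all of $\mathbb{R}$ is legitimate. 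Using $\int_{\mathbb{R}}e^{i\omega t-a|t|}\,dt=\frac{2a}{a^2+\omega^2}$ with $a=d(k)$ and $\omega=\lambda_k-\tau$, I obtain
$$\widehat{S_\mu(\cdot)v_0}(k,\tau)=\frac{\widehat{v_0}(k)}{\pi}\cdot\frac{d(k)}{d(k)^2+(\tau-\lambda_k)^2},\qquad k\in\mathbb{Z}^\ast.$$

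Next I would substitute this Lorentzian into the definition of the $Z^b$ norm. After the change of variables $\sigma=\tau-\lambda_k$ and the rescaling $\sigma=d(k)s$, each summand takes the form
$$|\widehat{v_0}(k)|^2\,w(k)\,\frac{1}{d(k)}\int_{\mathbb{R}}\Big\langle\tfrac{d(k)}{\langle k\rangle^\delta}\,s\Big\rangle^{2b}\frac{ds}{(1+s^2)^2},$$
where $w(k)$ denotes the $\langle k\rangle$-weight appearing in the norm. At this point Lemma \ref{locdamp4}, which gives $d(k)\sim_{\delta,g}\langle k\rangle^\delta$, is decisive: it shows $d(k)/\langle k\rangle^\delta$ is bounded above and below, so that $\langle \frac{d(k)}{\langle k\rangle^\delta}s\rangle\sim\langle s\rangle$ uniformly in $k$, and it converts the prefactor $1/d(k)$ into $\langle k\rangle^{-\delta}$.

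The main obstacle, and the place where the hypothesis $b<\frac{3}{2}$ enters, is the convergence of the resulting one-dimensional integral $\int_{\mathbb{R}}\langle s\rangle^{2b}(1+s^2)^{-2}\,ds$: the integrand behaves like $|s|^{2b-4}$ at infinity, which is integrable precisely when $2b-4<-1$, i.e. $b<\frac{3}{2}$. Granting this, each $\tau$-integral is $\sim_{\delta,b,g}\langle k\rangle^{-\delta}$, and it remains to verify that this decay absorbs the weight $w(k)$ in every branch of the piecewise definition of the norm. For $b\in(-\frac12,\frac12)$ one has $w(k)=\langle k\rangle^{2b\delta}$, producing $\langle k\rangle^{(2b-1)\delta}\le1$; for $b\ge\frac12$ one has $w(k)=\langle k\rangle^{\delta}$, so the $\langle k\rangle$-powers cancel exactly; and for $b\le-\frac12$ the weight $\langle k\rangle^{-\delta}$ only improves the decay. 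In all cases $\sum_{k\in\mathbb{Z}^\ast}|\widehat{v_0}(k)|^2\langle k\rangle^{(\cdots)\delta}\lesssim\|v_0\|_{L^2_0(\mathbb{T})}^2$, which yields the claimed bound. I expect the only delicate points beyond the convergence threshold to be the careful bookkeeping of these exponents across the three regimes of the norm.
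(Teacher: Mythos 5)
Your proof is correct and is essentially the argument the paper relies on (the paper simply defers to Proposition 2 of Flores--Oh--Smith, and that proof is exactly this one): compute the Lorentzian $\frac{2d(k)}{d(k)^2+(\tau-\lambda_k)^2}$ as the time Fourier transform of $e^{i\lambda_k t-d(k)|t|}$, rescale using $d(k)\sim_{\delta,g}\langle k\rangle^{\delta}$ from Lemma \ref{locdamp4}, and observe that $\int_{\mathbb{R}}\langle s\rangle^{2b}(1+s^2)^{-2}\,ds$ converges precisely when $b<\frac{3}{2}$, the resulting $\langle k\rangle^{-\delta}$ gain absorbing the frequency weight in each branch of the $Z^{b}$ norm. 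Your bookkeeping of the exponents across the three regimes is correct, so nothing is missing.
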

\begin{proof}
	See \cite[Proposition 2]{Flores OH and Smith}.
\end{proof}

Next proposition shows that the space $Z^{b}$ inherits a special property of Bourgain's spaces. In what well-posedness is concerned, such a property  is useful in the large data theory, as it allows to keep certain $Z^{b}$ norms of a solution small by localizing to a sufficiently small time interval.
\begin{prop}\label{locdamp7}
	Let  $\delta>0$ and $b\in \mathbb{R}$ be given. Then for any $u\in Z^{b}$ there exists a positive constant $C=C(\eta,b)$ such that
	$$\|\eta(t)u\|_{Z^{b}}\leq C \|u\|_{Z^{b}}.$$
	Furthermore, given $0<T<1$ and $-\frac{1}{2}<b'\leq b <\frac{1}{2},$ there exists $C=C(\eta,b,b')>0$ such that
	$$\|\eta_{T}(t)u\|_{Z^{b'}}\leq C T^{b-b'}\|u\|_{Z^{b}}.$$
\end{prop}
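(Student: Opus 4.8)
The plan is to reduce both inequalities to one-dimensional multiplier estimates for dilated cut-off functions by means of a change of variables that decouples the spatial weight from the temporal modulation weight. Fix $k\in\mathbb{Z}^{\ast}$ and write $\widehat{u}(k,t)$ for the spatial Fourier coefficient. I would introduce the modulated, rescaled profile
\[
G_{k}(t):=\langle k\rangle^{-\delta}e^{-i\lambda_{k}t/\langle k\rangle^{\delta}}\,\widehat{u}\!\left(k,\tfrac{t}{\langle k\rangle^{\delta}}\right),
\]
whose time Fourier transform is $\widehat{G_{k}}(\rho)=\widehat{u}(k,\lambda_{k}+\langle k\rangle^{\delta}\rho)$, with $\lambda_k$ as in \eqref{symboldef}. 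Substituting $\tau=\lambda_{k}+\langle k\rangle^{\delta}\rho$ shows that the contribution of the mode $k$ to $\|u\|_{Z^{b}}^{2}$ equals $\langle k\rangle^{2b\delta}\langle k\rangle^{\delta}\|G_{k}\|_{H^{b}}^{2}$ when $|b|<\frac12$ (for $|b|\ge\frac12$ the prefactor $\langle k\rangle^{2b\delta}$ is replaced by $\langle k\rangle^{\mathrm{sgn}(b)\delta}$, which does not affect the argument since it factors out identically on both sides). The crucial observation is that multiplying $u$ by $\eta$ replaces $\widehat u(k,t)$ by $\eta(t)\widehat u(k,t)$, hence replaces $G_{k}$ by $\eta(\cdot/\langle k\rangle^{\delta})G_{k}$, while multiplying by $\eta_{T}$ replaces $G_{k}$ by $\eta\big(\cdot/(T\langle k\rangle^{\delta})\big)G_{k}$. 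Both assertions therefore reduce, mode by mode and uniformly in $k$, to estimating $\|\eta(\cdot/\tau_{0})\phi\|_{H^{b'}}$ in terms of $\|\phi\|_{H^{b}}$ for dilation parameters $\tau_{0}=\langle k\rangle^{\delta}$ (first inequality, $b'=b$) and $\tau_{0}=T\langle k\rangle^{\delta}$ (second inequality).

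The required one-variable facts are: \textbf{(a)} for every $b\in\mathbb{R}$ there is $C_{b}>0$ with $\|\eta(\cdot/\tau_{0})\phi\|_{H^{b}}\le C_{b}\|\phi\|_{H^{b}}$ for all $\tau_{0}\ge1$; and \textbf{(b)} for $-\frac12<b'\le b<\frac12$ there is $C>0$ with $\|\eta(\cdot/\tau_{0})\phi\|_{H^{b'}}\le C\,\tau_{0}^{\,b-b'}\|\phi\|_{H^{b}}$ for all $\tau_{0}>0$. For \textbf{(a)} I would pass to the Fourier side, where multiplication by $\eta(\cdot/\tau_{0})$ becomes convolution against $\tau_{0}\widehat{\eta}(\tau_{0}\cdot)$. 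Applying Peetre's inequality $\langle\rho\rangle^{b}\lesssim\langle\rho-\rho'\rangle^{|b|}\langle\rho'\rangle^{b}$ and then Young's inequality bounds the $H^{b}$ operator norm by
\[
\big\|\langle\cdot\rangle^{|b|}\,\tau_{0}\widehat{\eta}(\tau_{0}\cdot)\big\|_{L^{1}}=\int\langle s/\tau_{0}\rangle^{|b|}\,|\widehat{\eta}(s)|\,ds\le\int\langle s\rangle^{|b|}\,|\widehat{\eta}(s)|\,ds<\infty,
\]
where the last step uses $\tau_{0}\ge1$ and that $\widehat\eta$ is Schwartz; the bound is uniform in $\tau_{0}\ge1$. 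Since $\langle k\rangle^{\delta}\ge1$ for every $k\in\mathbb{Z}^{\ast}$, taking $\tau_{0}=\langle k\rangle^{\delta}$ and summing the mode-$k$ contributions gives the first inequality for all $b\in\mathbb{R}$.

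For \textbf{(b)} I would split according to the size of $\tau_{0}$. When $0<\tau_{0}\le1$ this is the standard dilated cut-off estimate for Bourgain-type spaces in the time variable (the analogues in \cite{11} and \cite{Flores OH and Smith}), whose proof rests on a one-dimensional Sobolev multiplication inequality and uses the restriction $-\frac12<b'\le b<\frac12$ in an essential way to produce the positive power $\tau_{0}^{\,b-b'}$. When $\tau_{0}\ge1$ I would instead combine part \textbf{(a)} at the level $b'$ with the elementary embedding $\|\phi\|_{H^{b'}}\le\|\phi\|_{H^{b}}$ (valid since $b'\le b$) and the fact that $\tau_{0}^{\,b-b'}\ge1$, to obtain $\|\eta(\cdot/\tau_{0})\phi\|_{H^{b'}}\le C_{b'}\|\phi\|_{H^{b'}}\le C_{b'}\|\phi\|_{H^{b}}\le C_{b'}\tau_{0}^{\,b-b'}\|\phi\|_{H^{b}}$. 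Applying \textbf{(b)} with $\tau_{0}=T\langle k\rangle^{\delta}$ to each $G_{k}$ yields $\|\eta_{T}\cdot\|$-control with prefactor $(T\langle k\rangle^{\delta})^{b-b'}$; squaring, the factor $T^{2(b-b')}$ comes out front while $\langle k\rangle^{2(b-b')\delta}$ exactly reconciles the $Z^{b'}$ weight $\langle k\rangle^{2b'\delta}$ with the $Z^{b}$ weight $\langle k\rangle^{2b\delta}$, so the summation over $k$ closes with no residual loss and gives the second inequality.

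The main obstacle is the small-dilation estimate \textbf{(b)} for $\tau_{0}\le1$: it is the only genuinely non-elementary ingredient, it is precisely what delivers the gain $T^{\,b-b'}$, and it is exactly where the hypothesis $-\frac12<b'\le b<\frac12$ is used. The remaining care is purely bookkeeping, namely verifying that the constants in \textbf{(a)} and \textbf{(b)} are independent of $k$ so that summing the mode-wise estimates is legitimate; the change of variables above was designed precisely to make this uniformity transparent.
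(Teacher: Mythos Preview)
Your argument is correct and is precisely the approach underlying the result: the paper itself gives no proof beyond citing \cite[Proposition 3]{Flores OH and Smith}, and your mode-by-mode reduction via the rescaled profile $G_{k}$ together with the one-dimensional multiplier estimates \textbf{(a)} and \textbf{(b)} is exactly how that reference proceeds. The bookkeeping you describe (the factor $\langle k\rangle^{2(b-b')\delta}$ reconciling the $Z^{b'}$ and $Z^{b}$ weights) is the point of the dissipation-normalized normalization, and your uniformity-in-$k$ check is the right thing to verify.
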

\begin{proof}
See \cite[Proposition 3]{Flores OH and Smith}.
\end{proof}

\begin{thm}\label{locdamp8}
	Let $\delta>0$ be given and $g$ as in \eqref{OPG1}. Then for any $b\in\left(\frac{1}{2},\frac{3}{2}\right)$ and any Schwartz function $f$ there exists a positive constant $C=C(\delta,b,g)$ such that
	\begin{equation}\label{intsmooteff}
		\left\|\int_{0}^{t}S_{\mu}(t-s)f(s)ds\right\|_{Z^{b}}\leq C\left\|D^{-\delta\left(b-\frac{1}{2}\right)} f\right\|_{Z^{b-1}}.
	\end{equation}
\end{thm}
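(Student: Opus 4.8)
The plan is to take the spatial Fourier transform and reduce \eqref{intsmooteff} to a pointwise-in-$k$ analysis of the Duhamel kernel, splitting the forced evolution into an inhomogeneous multiplier piece (which will reproduce the target norm exactly) and a free-evolution piece (to be absorbed by Proposition~\ref{locdamp6}). First I would fix $w(t):=\int_{0}^{t}S_{\mu}(t-s)f(s)\,ds$ and compute, for $k\in\mathbb{Z}^{\ast}$ and $t>0$,
$$\widehat{w}(k,t)=\int_{0}^{t}e^{(i\lambda_{k}-d(k))(t-s)}\widehat{f}(k,s)\,ds.$$
Inserting $\widehat{f}(k,s)=\int_{\mathbb{R}}e^{is\tau}\widehat{f}(k,\tau)\,d\tau$ and integrating in $s$ gives the algebraic Duhamel identity
$$\widehat{w}(k,t)=\int_{\mathbb{R}}\frac{e^{i\tau t}-e^{(i\lambda_{k}-d(k))t}}{i(\tau-\lambda_{k})+d(k)}\,\widehat{f}(k,\tau)\,d\tau=A_{k}(t)-c_{k}\,e^{(i\lambda_{k}-d(k))t},$$
where $A_{k}$ has space--time Fourier transform $\widehat{f}(k,\tau)/(i(\tau-\lambda_{k})+d(k))$ and $c_{k}:=\int_{\mathbb{R}}\widehat{f}(k,\tau)/(i(\tau-\lambda_{k})+d(k))\,d\tau$; the second term is exactly $c_{k}$ times the free evolution $S_{\mu}(t)$ of the profile $\phi$ with $\widehat{\phi}(k)=c_{k}$.

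For the inhomogeneous term I would show that its $Z^{b}$ norm matches the right-hand side of \eqref{intsmooteff} term by term. The point is that $|i(\tau-\lambda_{k})+d(k)|^{2}=(\tau-\lambda_{k})^{2}+d(k)^{2}$, and Lemma~\ref{locdamp4} gives $d(k)\sim_{\delta,g}\langle k\rangle^{\delta}$, whence
$$|i(\tau-\lambda_{k})+d(k)|^{2}\sim\langle k\rangle^{2\delta}\Big\langle\tfrac{\tau-\lambda_{k}}{\langle k\rangle^{\delta}}\Big\rangle^{2}.$$
Since $b\in(\tfrac12,\tfrac32)$, the $Z^{b}$ norm uses the weight $\langle k\rangle^{\delta}\langle(\tau-\lambda_{k})/\langle k\rangle^{\delta}\rangle^{2b}$ (the branch $b\notin(-\tfrac12,\tfrac12)$), while $b-1\in(-\tfrac12,\tfrac12)$ makes the $Z^{b-1}$ norm use $\langle k\rangle^{2(b-1)\delta}\langle(\tau-\lambda_{k})/\langle k\rangle^{\delta}\rangle^{2(b-1)}$; taking into account the symbol $|k|^{-\delta(b-1/2)}$ of $D^{-\delta(b-1/2)}$, both $\|A_{k}\|_{Z^{b}}^{2}$ and the square of the right-hand side reduce to $\sum_{k}\int_{\mathbb{R}}\langle k\rangle^{-\delta}\langle(\tau-\lambda_{k})/\langle k\rangle^{\delta}\rangle^{2(b-1)}|\widehat{f}(k,\tau)|^{2}\,d\tau$. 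This is precisely the range in which no logarithmic losses occur.

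For the free-evolution term I would invoke Proposition~\ref{locdamp6}, which is valid exactly for $b<\tfrac32$, to get $\|S_{\mu}(t)\phi\|_{Z^{b}}\lesssim\|\phi\|_{L^{2}_{0}(\mathbb{T})}\sim(\sum_{k}|c_{k}|^{2})^{1/2}$, and it remains to bound $\sum_{k}|c_{k}|^{2}$ by the right-hand side. Using $d(k)\sim\langle k\rangle^{\delta}$ one has $|c_{k}|\lesssim\langle k\rangle^{-\delta}\int_{\mathbb{R}}\langle(\tau-\lambda_{k})/\langle k\rangle^{\delta}\rangle^{-1}|\widehat{f}(k,\tau)|\,d\tau$, and Cauchy--Schwarz after the substitution $u=(\tau-\lambda_{k})/\langle k\rangle^{\delta}$ (splitting $\langle u\rangle^{-1}=\langle u\rangle^{-b}\langle u\rangle^{b-1}$) yields $|c_{k}|^{2}\lesssim\big(\int_{\mathbb{R}}\langle u\rangle^{-2b}\,du\big)\,\langle k\rangle^{-\delta}\int_{\mathbb{R}}\langle(\tau-\lambda_{k})/\langle k\rangle^{\delta}\rangle^{2(b-1)}|\widehat{f}(k,\tau)|^{2}\,d\tau$. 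Here the factor $\int_{\mathbb{R}}\langle u\rangle^{-2b}\,du$ is finite precisely because $b>\tfrac12$, and summing in $k$ gives $\sum_{k}|c_{k}|^{2}\lesssim\|D^{-\delta(b-1/2)}f\|_{Z^{b-1}}^{2}$. Thus the endpoints $b=\tfrac12$ and $b=\tfrac32$ enter, respectively, through the convergence of this $\tau$-integral and through the hypothesis of Proposition~\ref{locdamp6}.

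The step I expect to be the main obstacle is the absolute value $|t-s|$ in the dissipative kernel \eqref{GB9}: the clean two-term split above is valid only for $t>0$, since for $t<0$ the same computation replaces $d(k)$ by $-d(k)$ and it is the exponential $e^{(i\lambda_{k}+d(k))t}$ that decays, so $w$ is genuinely piecewise in $t$ and the global $Z^{b}$ norm cannot be obtained by extending a single formula. I would handle $t>0$ and $t<0$ symmetrically --- the bounds rely only on $|i(\tau-\lambda_{k})\pm d(k)|=\sqrt{(\tau-\lambda_{k})^{2}+d(k)^{2}}$, which is insensitive to the sign, so the estimates on $A_{k}$ and on $|c_{k}^{\pm}|$ are identical in both cases --- and exploit that the dissipation forces each half-line piece, together with the free evolutions $S_{\mu}(t)\phi^{\pm}$, to decay and lie in $Z^{b}$. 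This non-group structure of $\{S_{\mu}(t)\}$ is where the argument must follow most carefully the corresponding one for the dispersion generalized Benjamin--Ono equation in \cite[Section~2.3]{Flores OH and Smith}.
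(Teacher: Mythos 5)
Your skeleton is the right one and is in fact the argument the paper relies on: the paper offers no proof of Theorem \ref{locdamp8} beyond the citation of \cite[Proposition 4]{Flores OH and Smith}, and that proof proceeds exactly by the algebraic Duhamel identity you write down. Your bookkeeping is also correct: the equivalence $|i(\tau-\lambda_{k})+d(k)|^{2}\sim\langle k\rangle^{2\delta}\langle(\tau-\lambda_{k})/\langle k\rangle^{\delta}\rangle^{2}$ via Lemma \ref{locdamp4}, the reduction of both sides of \eqref{intsmooteff} to the common quantity $\sum_{k}\int\langle k\rangle^{-\delta}\langle(\tau-\lambda_{k})/\langle k\rangle^{\delta}\rangle^{2(b-1)}|\widehat{f}(k,\tau)|^{2}\,d\tau$ (using the two branches of the norm for $b>\tfrac12$ and $b-1\in(-\tfrac12,\tfrac12)$), and the identification of $b>\tfrac12$ with the convergence of $\int\langle u\rangle^{-2b}\,du$ in the bound for $c_{k}$ and of $b<\tfrac32$ with Proposition \ref{locdamp6} are all accurate.

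The gap is in the step you flag yourself, and the fix you propose does not close it. The obstruction to gluing the half-lines is not the size of the denominators $i(\tau-\lambda_{k})\pm d(k)$ (which is indeed symmetric) but the fact that the identity $\widehat{w}(k,t)=A_{k}(t)-c_{k}e^{(i\lambda_{k}-d(k))t}$ holds only for $t>0$: to use it globally you must multiply by $\mathbf{1}_{t>0}$, and sharp time truncation is not bounded on $Z^{b}$ for $b>\tfrac12$; worse, $e^{(i\lambda_{k}-d(k))t}$ grows like $e^{d(k)|t|}$ on $t<0$, so neither $A_{k}-c_{k}e^{(i\lambda_{k}-d(k))t}$ nor its two summands lies in $Z^{b}$ as a function on all of $\mathbb{R}$, and a triangle inequality on each half-line piece is simply unavailable. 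The device that closes this --- and the actual content of \cite[Proposition 4]{Flores OH and Smith} --- is to split $\widehat{f}(k,\tau)$ according to $|\tau-\lambda_{k}|\lessgtr d(k)$: in the low-modulation region one Taylor-expands the quotient so that every term is a coefficient (summable in the expansion index) times $t^{n}e^{i\lambda_{k}t-d(k)|t|}$, i.e.\ a polynomially weighted copy of the \emph{globally} admissible free evolution, estimated as in Proposition \ref{locdamp6}; in the high-modulation region $|i(\tau-\lambda_{k})\pm d(k)|\sim|\tau-\lambda_{k}|\gtrsim d(k)$ and the oscillatory and boundary pieces can be separated safely. Equivalently one needs the fact that a function of the normalized time variable vanishing at $t=0$ can be sharply truncated in $H^{b}_{t}$ for $\tfrac12<b<\tfrac32$, applied uniformly in $k$. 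Without one of these ingredients your outline does not produce a global $Z^{b}$ bound, even though every multiplier estimate in it is correct.
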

\begin{proof}
See \cite[Proposition 4]{Flores OH and Smith}.
\end{proof}

The proofs of the previous two results use some properties of the $A_{p}-$weights theory, we refer the reader to \cite[Chapter 9]{Grafakos} for additional details on this issue.
The introduction of the dissipation-normalized Bourgain spaces is directly reflected in Theorem \ref{locdamp8}, since it reveals a smoothing effect of order $\delta/2$ for $b\sim 1/2$. Indeed, in this case, the norm on the right hand side of \eqref{intsmooteff} is approximately $\|f\|_{Z^{-\frac{1}{2}}}$, which by definition, may be bounded by  $ \|f\|_{L^{2}_{t}H_{x}^{-\frac{\delta}{2}}}.$  As pointed out in \cite{Flores OH and Smith}, this smoothing effect is analogous to the $\frac{1}{2}$ derivative gain achieved for the proof of stabilization of BO equation using the operator $GD^{1}G$ in \cite[Proposition 2.16]{Linares Rosier}.

In what follows, given two integers $n$ and $k$, we will write $k\sim n$ provided $|k|\sim |n|$.
The following result is fundamental to establish an adequate estimate for the linear operator $N_{1}$.
\begin{lem}\label{stiN}
	Let $\lambda_{k}$ defined as in \eqref{symboldef} and assume $\delta>0.$ Then for any $k,n\in \mathbb{Z}$ with $k\neq n$ and $k\sim n$,

	\begin{equation}\label{incre6}
		\max\left\{ \left\langle \frac{\tau-\lambda_{k}}{\langle k \rangle^{\delta} } \right\rangle, \left\langle \frac{\tau-\lambda_{n}}{\langle n \rangle^{\delta} } \right\rangle \right \} 
		\gtrsim_{\beta, \delta, m}
		\max\left\{ \langle k \rangle, \langle n \rangle \right\}^{2m-\delta},
	\end{equation}
provide $|k|$ is sufficiently large.
\end{lem}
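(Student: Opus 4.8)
The plan is to reduce the estimate to a lower bound on the spectral gap $|\lambda_k-\lambda_n|$ and then establish that gap. For the reduction, I would first note that since $\langle x\rangle\ge|x|$ one has $|\tau-\lambda_k|\le\langle k\rangle^{\delta}\langle\frac{\tau-\lambda_k}{\langle k\rangle^{\delta}}\rangle$ and likewise for $n$. Writing $\lambda_k-\lambda_n=(\tau-\lambda_n)-(\tau-\lambda_k)$ and applying the triangle inequality gives
$$|\lambda_k-\lambda_n|\le\langle k\rangle^{\delta}\left\langle\frac{\tau-\lambda_k}{\langle k\rangle^{\delta}}\right\rangle+\langle n\rangle^{\delta}\left\langle\frac{\tau-\lambda_n}{\langle n\rangle^{\delta}}\right\rangle\le 2\max\{\langle k\rangle,\langle n\rangle\}^{\delta}\max\left\{\left\langle\frac{\tau-\lambda_k}{\langle k\rangle^{\delta}}\right\rangle,\left\langle\frac{\tau-\lambda_n}{\langle n\rangle^{\delta}}\right\rangle\right\}.$$
Rearranging, the left-hand side of \eqref{incre6} is bounded below by $\tfrac{1}{2}|\lambda_k-\lambda_n|\,\max\{\langle k\rangle,\langle n\rangle\}^{-\delta}$. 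Thus it suffices to prove the gap bound $|\lambda_k-\lambda_n|\gtrsim_{\beta,m}\max\{\langle k\rangle,\langle n\rangle\}^{2m}$, since dividing by $\max\{\langle k\rangle,\langle n\rangle\}^{\delta}$ then produces exactly the claimed power $\max\{\langle k\rangle,\langle n\rangle\}^{2m-\delta}$. Note the reduction itself needs no hypothesis; $k\sim n$ will only be used for the gap.

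For the gap bound I would split according to the signs of $k$ and $n$, using that the leading term of $\lambda_k$ is $-\beta k|k|^{2m}$. If $k$ and $n$ have the same sign, say both positive, I write $\lambda_k-\lambda_n=-\beta(k^{2m+1}-n^{2m+1})+\alpha(k^{2r+1}-n^{2r+1})-2\mu(k-n)$ and apply the mean value theorem to each power, $k^{p}-n^{p}=p\xi^{p-1}(k-n)$ with $\xi$ between $k$ and $n$. Because $k\sim n$, every intermediate point satisfies $\xi\sim M:=\max\{|k|,|n|\}$, so the leading term has size $\gtrsim_{\beta,m}M^{2m}|k-n|\ge M^{2m}$ (as $|k-n|\ge 1$ for distinct integers), while the remaining terms are $O(M^{2r}|k-n|)$ and $O(|k-n|)$ with $2r<2m$. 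For $|k|$ sufficiently large these lower-order contributions are absorbed by the leading one, giving $|\lambda_k-\lambda_n|\gtrsim_{\beta,m}M^{2m}$.

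If $k$ and $n$ have opposite signs, I would exploit that $\lambda$ is odd, $\lambda_{-j}=-\lambda_j$: writing $n=-n'$ with $n'>0$ yields $\lambda_k-\lambda_n=\lambda_k+\lambda_{n'}$, whose leading term $-\beta(k^{2m+1}+(n')^{2m+1})$ suffers no cancellation and has size $\gtrsim_{\beta}M^{2m+1}$, dominating the lower-order terms for $|k|$ large; this case is in fact strictly stronger. Combining the two cases with the reduction above completes the proof, with the implicit constant depending only on $\beta,\delta,m$.

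The main obstacle is the same-sign gap estimate: one must verify that the $\alpha$- and $\mu$-dependent lower-order terms cannot conspire to cancel the leading $\beta$-term, which is precisely where $r<m$ and the phrase \emph{``$|k|$ sufficiently large''} enter. Equally essential is the hypothesis $k\sim n$, which guarantees the mean-value point $\xi$ is comparable to $M$ so that $k^{2m+1}-n^{2m+1}$ genuinely behaves like $M^{2m}|k-n|$ rather than degenerating when $\min\{|k|,|n|\}$ is far smaller than $M$.
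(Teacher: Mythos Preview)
Your proof is correct. The overall strategy---reduce the modulation estimate to a lower bound on the spectral gap $|\lambda_k-\lambda_n|$, then establish that gap via the mean value theorem (same signs) or the oddness of $\lambda$ (opposite signs)---matches the paper's. Your reduction step, however, is considerably more direct: the paper uses the identity $\max\{c_1,c_2\}=\tfrac12(c_1+c_2+|c_1-c_2|)$ and then, within each of the two sign cases for $(k,n)$, splits into three further subcases according to the signs of $\tau-\lambda_k$ and $\tau-\lambda_n$ (six subcases in all), in each one manipulating the terms by hand to isolate $(\lambda_k-\lambda_n)/\langle n\rangle^{\delta}$. Your single triangle-inequality argument accomplishes the same reduction in one line and eliminates that subcase analysis entirely. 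The gap estimate $|\lambda_k-\lambda_n|\gtrsim\max\{\langle k\rangle,\langle n\rangle\}^{2m}$ itself is handled essentially identically in both proofs.
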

\begin{proof}
By symmetry, without loss of generality, we may assume $|n|>|k|$. 	Using   the identity
	$\displaystyle{\max\{c_{1},c_{2}\}= \frac{1}{2}\left(c_{1}+c_{2}+|c_{1}-c_{2}|\right)}$ 
	and the fact that $\langle x\rangle\sim 1+|x|$
	we get
	\begin{equation}\label{incre7}
		{
			\begin{split}
				M&:=\max\left\{ \left\langle \frac{\tau-\lambda_{k}}{\langle k \rangle^{\delta} } \right\rangle, \left\langle \frac{\tau-\lambda_{n}}{\langle n \rangle^{\delta} } \right\rangle \right \}\\
				&\gtrsim \left| \frac{\tau-\lambda_{k}}{\langle k \rangle^{\delta} } \right| + \left| \frac{\tau-\lambda_{n}}{\langle n \rangle^{\delta} } \right|
				+ \left| \left| \frac{\tau-\lambda_{k}}{\langle k \rangle^{\delta} } \right|- \left| \frac{\tau-\lambda_{n}}{\langle n \rangle^{\delta} } \right| \right|.
		\end{split}}
	\end{equation}

Recall that in Section \ref{Linear Systems} we have proved that the sequence $\{\lambda_{k}\}$ is strictly decreasing for $|k|$ sufficiently large; in addition, $\lambda_{k}>0$ for $k<0$ (large) and $\lambda_{k}<0$ for $k>0$ (large).
We now split our analysis into two cases.\\

\noindent	\textbf{Case 1.} $n>k>0$ or $n<k<0$. 
	
	Here we will consider only that $k$ and $n$ satisfy  $n>k>0$ because the case $n<k<0$ can be treated in a similar fashion. Hence, we have $0>\lambda_{k}>\lambda_{n}$ and $\tau-\lambda_{k}<\tau -\lambda_{n}.$ We now split this case into three subcases.
	
\noindent	\textbf{Subcase 1.1.} $0<\tau-\lambda_{k}.$ 

From \eqref{incre7} we obtain
\begin{equation}\label{incre8}
{
\begin{split}
M&\gtrsim  \frac{\tau-\lambda_{k}}{\langle k \rangle^{\delta} }  +  \frac{\tau-\lambda_{n}}{\langle n \rangle^{\delta} }\\
&=\frac{\tau-\lambda_{k}}{\langle k \rangle^{\delta} } + \frac{\tau-\lambda_{k}}{\langle n \rangle^{\delta} } - \frac{\tau-\lambda_{k}}{\langle n \rangle^{\delta} } +  \frac{\tau-\lambda_{n}}{\langle n \rangle^{\delta} }\\
&=(\tau-\lambda_{k})
\left(\frac{1}{\langle k \rangle^{\delta}} + \frac{1}{\langle n \rangle^{\delta}} \right) +  \frac{\lambda_{k}-\lambda_{n}}{\langle n \rangle^{\delta} }\\
& \geq \frac{\lambda_{k}-\lambda_{n}}{\langle n \rangle^{\delta} }.
\end{split}}
\end{equation}
	It follows from the Mean Value Theorem  that there exists $n^{\ast}\in \mathbb{R}$ with $n>n^{\ast}>k$ such that
	\begin{equation}\label{incre9}
		{
			\begin{split}
				\frac{\lambda_{k}-\lambda_{n}}{\langle n \rangle^{\delta} }
				&=\frac{\Big(\beta (2m+1) (n^{\ast})^{2m}-\alpha (2r+1) (n^{\ast})^{2r} +2 \mu\Big)(n-k)  }
				{\langle n \rangle^{\delta}}\\
				&\geq\frac{\beta (2m+1) (n^{\ast})^{2m}}
				{\langle n \rangle^{\delta}}\left( 1- \frac{\alpha (2r+1)}{\beta (2m+1)}(n^{\ast})^{2(r-m)} + \frac{2 \mu}{ \beta (2m+1) (n^{\ast})^{2m}} \right)	  
				\\
				&\geq \frac{\beta (2m+1) (n^{\ast})^{2m}}
				{2 \langle n \rangle^{\delta}},
		\end{split}}
	\end{equation}
	where we have used that $n-k\geq1$ and the fact that the term between the parenthesis approaches 1 for $k$ large enough. From \eqref{incre8}, \eqref{incre9} and the fact that $n \sim k$ we infer that
	$$M \gtrsim_{m,\beta}  \frac{ n^{2m}}
	{\langle n \rangle^{\delta}} \gtrsim_{m,\beta} \langle n \rangle^{2m-\delta},$$
	which is the desired inequality.

\noindent	\textbf{Subcase 1.2.} $\tau-\lambda_{k}<0<\tau-\lambda_{n}.$ 

Here, from \eqref{incre7} we obtain
	\begin{equation}\label{incre11}
		{
			\begin{split}
				M
				&=-\frac{\tau-\lambda_{k}}{\langle k \rangle^{\delta} } + \frac{\tau-\lambda_{k}}{\langle n \rangle^{\delta} } - \frac{\tau-\lambda_{k}}{\langle n \rangle^{\delta} } +  \frac{\tau-\lambda_{n}}{\langle n \rangle^{\delta} }\\
				&=-(\tau-\lambda_{k})
				\left(\frac{1}{\langle k \rangle^{\delta}} - \frac{1}{\langle n \rangle^{\delta}} \right) +  \frac{\lambda_{k}-\lambda_{n}}{\langle n \rangle^{\delta} }\\
				& \geq \frac{\lambda_{k}-\lambda_{n}}{\langle n \rangle^{\delta} }\\
				& \gtrsim_{m,\beta} \langle n \rangle^{2m-\delta},
		\end{split}}
	\end{equation}
	where we have used a similar estimate as in \eqref{incre9} in the last inequality.
	
\noindent	\textbf{Subcase 1.3.} $\tau-\lambda_{k}<\tau-\lambda_{n}<0.$ 

By using the second term in the sum \eqref{incre7} we deduce
	\begin{equation*}
		{
			\begin{split}
				M&\gtrsim 
				\left| - \frac{\tau-\lambda_{k}}{\langle k \rangle^{\delta} } +  \frac{\tau-\lambda_{n}}{\langle n \rangle^{\delta} }  \right|
				& \geq \frac{\lambda_{k}-\lambda_{n}}{\langle n \rangle^{\delta} }
				& \gtrsim_{m,\beta} \langle n \rangle^{2m-\delta}.
		\end{split}}
	\end{equation*}
	In all three subcases, \eqref{incre6} holds.\\

\noindent	\textbf{Case 2.} $n>0>k$ or  $k>0>n$.
	
We assume that $k$ and $n$ satisfy  $n>0>k$ (the case $k>0>n$ can be treated similarly). Thus we have $\lambda_{k}>0>\lambda_{n},$ $ \lambda_{k}-\lambda_{n}>\lambda_{k}>0,$ and $\tau-\lambda_{k}<\tau -\lambda_{n}.$  We also split this case into three other ones.
	
\noindent	\textbf{Subcase 2.1.} $0<\tau-\lambda_{k}.$ 

From \eqref{incre7} and similar computations as in \eqref{incre8}, we deduce
	\begin{equation}\label{incre17}
		{
			\begin{split}
				M\gtrsim  \frac{\tau-\lambda_{k}}{\langle k \rangle^{\delta} }  +  \frac{\tau-\lambda_{n}}{\langle n \rangle^{\delta} }
				 \geq \frac{\lambda_{k}-\lambda_{n}}{\langle n \rangle^{\delta} }
				 \geq \frac{\lambda_{k}}{\langle n \rangle^{\delta} }.
		\end{split}}
	\end{equation}
	Note that
	\begin{equation}\label{incre18}
		{
			\begin{split}
				\frac{\lambda_{k}}{\langle n \rangle^{\delta} }
				&=\frac{\beta |k|^{2m+1}-\alpha |k|^{2r+1}+ 2 \mu |k|}
				{\langle n \rangle^{\delta}}\\
				&=\frac{\beta |k|^{2m+1}}
				{\langle n \rangle^{\delta}} 
				\left( 1- \frac{\alpha}{\beta |k|^{2(m-r)}} + \frac{2 \mu}{ \beta  |k|^{2m}} \right)	  	\\
				&\geq \frac{\beta |k|^{2m+1}}
				{2 \langle n \rangle^{\delta}},
		\end{split}}
	\end{equation}
	where we have used the fact that the term between the parenthesis approaches 1 for $k$ sufficiently large. From \eqref{incre17}, \eqref{incre18} and the fact that $n \sim k$ we infer that
	$$M \gtrsim_{\beta}  \frac{ n^{2m}}
	{\langle n \rangle^{\delta}} \gtrsim_{\beta} \langle n \rangle^{2m-\delta}.$$

\noindent	\textbf{Subcase 2.2.} $\tau-\lambda_{k}<0<\tau-\lambda_{n}.$ 

Using \eqref{incre7} and similar computations as in \eqref{incre11} and \eqref{incre18}, we obtain
	\begin{equation*}
		{
			\begin{split}
				M\gtrsim  \frac{\lambda_{k}-\lambda_{n}}{\langle n \rangle^{\delta} }
				 \gtrsim_{m,\beta} \langle n \rangle^{2m-\delta}.
		\end{split}}
	\end{equation*}

\noindent	\textbf{Subcase 2.3.} $\tau-\lambda_{k}<\tau-\lambda_{n}<0.$

As in Subcase 1.3 we deduce
	\begin{equation*}
		{
			\begin{split}
				M\gtrsim 
				\left| - \frac{\tau-\lambda_{k}}{\langle k \rangle^{\delta} } +  \frac{\tau-\lambda_{n}}{\langle n \rangle^{\delta} }  \right|
				 \geq \frac{\lambda_{k}-\lambda_{n}}{\langle n \rangle^{\delta} }
				 \gtrsim_{m,\beta} \langle n \rangle^{2m-\delta}.
		\end{split}}
	\end{equation*}
Again	in all three subcases \eqref{incre6} holds. This completes the proof of the lemma.
\end{proof}

Next two results bring the estimate for the linear operators $N_1$ and $R$ on the dispersion-normalized Bourgain's space $Z^{b}_{T}.$

\begin{thm}\label{stiN1}
	Let $m,\delta \in \mathbb{R}$ be given with $2m>\delta>0,$ $m>\frac{1}{2},$ and $g$ as in \eqref{OPG1}. For any given $0<T<1$ and any $\displaystyle{\frac{1}{2}<b< \frac{2m}{2m+\delta}},$ assume $v\in Z_{T}^{b}$. Then,  there exists $\epsilon>0$ small such that
	$$\displaystyle{\|D^{-\delta \left(b-\frac{1}{2}\right) }  N_{1}(v)\|_{Z_{T}^{b-1}} \lesssim_{\epsilon,b,\delta,m, g} T^{\epsilon} \|v\|_{Z^{b}_{T}}.}$$
\end{thm}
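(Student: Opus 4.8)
The plan is to reduce the claim, via time localization, to a fixed-time estimate on the full line with a slightly improved temporal exponent, and then to prove that estimate by a Schur-test argument in which the smoothing furnished by Lemma~\ref{stiN} is traded against the $\delta$-order loss carried by $N_1$.

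\textbf{Reduction via time localization.} Since $b>\frac12$ we have $b-1\in(-\frac12,0)$. Given $v\in Z_T^b$, pick an extension $\widetilde v\in Z^b$ with $\|\widetilde v\|_{Z^b}\le 2\|v\|_{Z_T^b}$. As $N_1$ acts only in $x$, it commutes with multiplication by $\eta_T$ and with $D^{-\delta(b-\frac12)}$, and $\eta_T\equiv1$ on $[0,T]$; hence $\eta_T D^{-\delta(b-\frac12)}N_1(\widetilde v)$ is an extension of $D^{-\delta(b-\frac12)}N_1(v)$ from $[0,T]$. Thus, by the second part of Proposition~\ref{locdamp7} with target exponent $b-1$ and source exponent $b-1+\theta$ (admissible since $-\frac12<b-1\le b-1+\theta<\frac12$ for $\theta>0$ small),
\begin{equation*}
\|D^{-\delta(b-\frac12)}N_1(v)\|_{Z_T^{b-1}}\le \|\eta_T D^{-\delta(b-\frac12)}N_1(\widetilde v)\|_{Z^{b-1}}\lesssim T^{\theta}\|D^{-\delta(b-\frac12)}N_1(\widetilde v)\|_{Z^{b-1+\theta}}.
\end{equation*}
It therefore suffices to prove, for some small $\theta>0$ (which plays the role of $\epsilon$), the fixed-exponent estimate
\begin{equation}\label{coreN1plan}
\|D^{-\delta(b-\frac12)}N_1(w)\|_{Z^{b-1+\theta}}\lesssim_{b,\delta,m,g}\|w\|_{Z^b},\qquad w\in Z^b.
\end{equation}

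\textbf{The core estimate via Schur's test.} Taking space--time Fourier transforms, $\widehat{N_1 w}(k,\tau)=\sum_{n\ne k}K(k,n)\widehat w(n,\tau)$ with $K(k,n):=\sum_{l}|l|^\delta\widehat g(k-l)\widehat g(l-n)$. Writing $l=k-p$ and using $|k-p|^\delta\lesssim_\delta\langle k\rangle^\delta\langle p\rangle^\delta$ together with the rapid decay of $\widehat g$ (since $g\in C^\infty_p(\mathbb{T})$), the resulting convolution is rapidly decreasing in $|k-n|$, so that for every $N$,
\begin{equation*}
|K(k,n)|\lesssim_{N,\delta,g}\langle k\rangle^{\delta}\langle k-n\rangle^{-N},
\end{equation*}
and symmetrically with $\langle k\rangle$ replaced by $\langle n\rangle$; in particular only $n\sim k$ contributes, up to rapidly decaying tails. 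Normalizing by $F(n,\tau):=\langle n\rangle^{\delta/2}\big\langle\tfrac{\tau-\lambda_n}{\langle n\rangle^\delta}\big\rangle^{b}\widehat w(n,\tau)$, so that $\|F\|_{\ell^2_n L^2_\tau}=\|w\|_{Z^b}$ (for $b>\frac12$ the spatial weight of $Z^b$ is $\langle n\rangle^\delta$), estimate \eqref{coreN1plan} becomes the boundedness on $\ell^2_k$, uniformly in $\tau$, of the kernel
\begin{equation*}
M(k,n,\tau)=\langle k\rangle^{-\frac\delta2+\delta\theta}\langle n\rangle^{-\frac\delta2}|K(k,n)|\,\Big\langle\tfrac{\tau-\lambda_k}{\langle k\rangle^\delta}\Big\rangle^{b-1+\theta}\Big\langle\tfrac{\tau-\lambda_n}{\langle n\rangle^\delta}\Big\rangle^{-b}.
\end{equation*}
As $M$ does not couple distinct $\tau$, it suffices to verify $\sup_k\sum_n|M|\lesssim1$ and $\sup_n\sum_k|M|\lesssim1$, uniformly in $\tau$.

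\textbf{Trading modulation for smoothing.} The regions $|k-n|>\frac12|k|$ and $|k|$ bounded are harmless: there the rapid decay of $K$ and the bounds $\langle\cdot\rangle^{b-1+\theta}\le1$, $\langle\cdot\rangle^{-b}\le1$ (both exponents negative for $\theta$ small) give $|M|\lesssim\langle k-n\rangle^{-N'}$, which is Schur summable. In the region $|k-n|\le\frac12|k|$ with $|k|$ large we have $k\sim n$ and $k\ne n$, so Lemma~\ref{stiN} applies:
\begin{equation*}
\max\Big\{\Big\langle\tfrac{\tau-\lambda_k}{\langle k\rangle^\delta}\Big\rangle,\Big\langle\tfrac{\tau-\lambda_n}{\langle n\rangle^\delta}\Big\rangle\Big\}\gtrsim\langle k\rangle^{2m-\delta}.
\end{equation*}
If the $k$-bracket realizes the maximum, bound it by $\langle k\rangle^{(2m-\delta)(b-1+\theta)}$ (using $b-1+\theta<0$) and discard the $n$-bracket; if the $n$-bracket realizes it, bound it by $\langle k\rangle^{-(2m-\delta)b}$ and discard the $k$-bracket. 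Combining with $|K(k,n)|\lesssim\langle k\rangle^\delta\langle k-n\rangle^{-N}$ and $n\sim k$ (so the spatial weights collapse to $\langle k\rangle^{\delta\theta}$), in either case
\begin{equation*}
|M(k,n,\tau)|\lesssim\langle k\rangle^{\,\delta\theta+(2m-\delta)(b-1+\theta)}\langle k-n\rangle^{-N}\quad\text{or}\quad|M(k,n,\tau)|\lesssim\langle k\rangle^{\,\delta\theta-(2m-\delta)b}\langle k-n\rangle^{-N}.
\end{equation*}
Because $2m>\delta$ and $\tfrac12<b<\tfrac{2m}{2m+\delta}<1$, both exponents of $\langle k\rangle$ are strictly negative at $\theta=0$, hence stay $\le0$ for $\theta>0$ small; thus $|M|\lesssim\langle k-n\rangle^{-N}$ uniformly in $\tau$, the two Schur sums are bounded, and \eqref{coreN1plan} follows, giving the theorem with $\epsilon=\theta$.

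\textbf{Main obstacle.} The crux is the balancing in the last step. The operator $N_1$ behaves like a derivative of order $\delta$ (the factor $\langle k\rangle^\delta$ in $K$), and only $D^{-\delta(b-\frac12)}$ together with the dissipative/dispersive smoothing of Lemma~\ref{stiN} can absorb this loss. The case split according to which modulation bracket is large, and the verification that the exponents of $\langle k\rangle$ remain nonpositive, is precisely where the hypotheses $2m>\delta$ and the admissible range of $b$ enter; the rapid decay of $\widehat g$ is what confines everything to $n\sim k$ so that the lemma is applicable.
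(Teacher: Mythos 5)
Your proposal is correct and follows essentially the same route as the paper's proof: time localization via Proposition \ref{locdamp7} to produce the factor $T^{\epsilon}$, the rapid decay of $\widehat{g}$ to confine the interaction to $n\sim k$, and Lemma \ref{stiN} to convert the large modulation into the $\langle k\rangle^{-(2m-\delta)}$ smoothing that absorbs the $\delta$-order loss of $N_{1}$, with the signs of the resulting exponents checked exactly as in the paper. The only packaging difference is that you sum out the intermediate frequency $l$ first, obtaining the kernel bound $|K(k,n)|\lesssim \langle k\rangle^{\delta}\langle k-n\rangle^{-N}$ and closing with a Schur test in $(k,n)$, whereas the paper keeps the double sum, splits into the regimes $l\nsim k$, $l\nsim n$, and $k\sim l\sim n$, and closes with Young's inequality; both implementations are sound and lead to the same conditions on $b$, $\delta$, and $m$.
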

\begin{proof}
	Let $u\in Z^{b}$ be such that $u(t)\equiv v(t)$ on $[0,T]$ and $\|u\|_{Z^{b}}\leq 2 \|v\|_{Z^{b}_{T}}.$
	From the hypothesis we have $\displaystyle{\frac{1}{2}<b<1}.$ Let $\epsilon>0$ to be chosen later. Assuming $\epsilon<\frac{1}{2}$ we obtain $\displaystyle{-\frac{1}{2}<b-1<b-1+\epsilon <\frac{1}{2}}$ and from Proposition \ref{locdamp7}
	we infer
	\begin{equation*}
		{\begin{split}
				\|D^{-\delta \left(b-\frac{1}{2}\right) }  N_{1}(v)\|_{Z_{T}^{b-1}}	
				& \leq  \|D^{-\delta \left(b-\frac{1}{2}\right) } \eta_{T}(t) N_{1}(u)\|_{Z^{b-1}}\\
				&\lesssim_{\eta,b,\epsilon}T^{\epsilon} \|D^{-\delta \left(b-\frac{1}{2}\right) }  N_{1}(u)\|_{Z^{b-1+\epsilon}}.
		\end{split}}
	\end{equation*}
	Now we estimate the term in the right-hand side  of the last inequality.
First note that for any $k\in \mathbb{Z}^{\ast},$ 
	\begin{equation}\label{stiN-1}
		\left\langle k \right\rangle^{2(b-1+\epsilon)\delta} |k|^{-\delta\left(b-\frac{1}{2}\right)2} \lesssim_{\delta,b} |k|^{-2\left(\frac{1}{2}-\epsilon\right)\delta}.
	\end{equation} 
	Hence, using the definition of the space $Z^{b-1+\epsilon}$ and \eqref{stiN-1} we obtain
	$$
	{
		\begin{split}
			\left\|D^{-\delta \left(b-\frac{1}{2}\right) }  N_{1}(u)\right\|_{Z^{b-1+\epsilon}}
			& \lesssim_{\delta,b} 
			\left(\sum_{k\in \mathbb{Z}^{\ast}}\int_{\mathbb{R}}
			|k|^{-2\left(\frac{1}{2}-\epsilon\right)\delta} \left\langle\frac{\tau-\lambda_{k}} {\langle k\rangle^{\delta}} \right\rangle^{2(b-1+\epsilon)} \left| \widehat{N_1u}(n,\tau)\right|^{2} d\tau\right)^{\frac{1}{2}}\\
			&=\left\|  |k|^{-\left(\frac{1}{2}-\epsilon\right)\delta} \left\langle\frac{\tau-\lambda_{k}} {\langle k\rangle^{\delta}} \right\rangle^{b-1+\epsilon}
			  \widehat{N_1u}(n,\tau)  \right\|_{L^{2}_{\tau}l^{2}_{k}},
	\end{split}}
	$$
	where, here and throughout, we use $L^2_\tau l^2_k$ to indicate $L^2_\tau l^2_k(\mathbb{R}\times\mathbb{Z}^*)$.
By	setting 
	\begin{equation*}
		f(n,\tau):=\langle n \rangle^{\frac{\delta}{2}} \left\langle\frac{\tau-\lambda_{n}} {\langle n\rangle^{\delta}} \right\rangle^{b}|\widehat{u}(n,\tau)|,
	\end{equation*}
	and 
\begin{equation}\label{stiN4}
\mathcal{M}=	\mathcal{M}_{l,n,k}(\tau):= |k|^{-\left(\frac{1}{2}-\epsilon\right)\delta}
\left\langle\frac{\tau-\lambda_{k}} {\langle k\rangle^{\delta}} \right\rangle^{b-1+\epsilon}
|l|^{\delta}
\langle n \rangle^{-\frac{\delta}{2}} \left\langle\frac{\tau-\lambda_{n}} {\langle n\rangle^{\delta}} \right\rangle^{-b},
\end{equation}
we obtain, using the definition of $N_{1}$,
\begin{equation*}
{
\begin{split}
\|D^{-\delta \left(b-\frac{1}{2}\right) }  N_{1}(u)\|_{Z^{b-1+\epsilon}}
& \lesssim_{\delta,b} 
\left\| 
\sum_{l\in \mathbb{Z}} \sum_{\substack{n\in \mathbb{Z}\\	n\neq k}} |\widehat{g}(k-l)||\widehat{g}(l-n)| \mathcal{M}_{l,n,k}(\tau)  f(n,\tau)  \right\|_{L^{2}_{\tau}l^{2}_{k}}\\
&\leq 
\left\| 
\sum_{\substack{l\in \mathbb{Z}\\ l\nsim k}} \sum_{\substack{n\in \mathbb{Z}\\	n\neq k}} |\widehat{g}(k-l)||\widehat{g}(l-n)| \mathcal{M}_{l,n,k}(\tau)  f(n,\tau)  \right\|_{L^{2}_{\tau}l^{2}_{k}}\\	
&\quad + \left\| 
\sum_{\substack{l\in \mathbb{Z}\\ l\nsim n}} \sum_{\substack{n\in \mathbb{Z}\\	n\neq k}} |\widehat{g}(k-l)||\widehat{g}(l-n)| \mathcal{M}_{l,n,k}(\tau)  f(n,\tau)  \right\|_{L^{2}_{\tau}l^{2}_{k}}\\
&\quad + \left\| 
\sum_{\substack{l\in \mathbb{Z}\\ k\sim l \sim n}} \sum_{\substack{n\in \mathbb{Z}\\	n\neq k}} |\widehat{g}(k-l)||\widehat{g}(l-n)| \mathcal{M}_{l,n,k}(\tau)  f(n,\tau)  \right\|_{L^{2}_{\tau}l^{2}_{k}}\\
&=:I_{1}+I_{2}+I_{3}.
\end{split}}
\end{equation*}
The idea to estimate $I_1$ and $I_2$ is to choose $\epsilon$ small and to use the decay of the Fourier coefficients of $g$ to control the term 	$\mathcal{M}_{l,n,k}(\tau)$. Indeed, recall that if $g^{(N)}$ denotes $N^{th}$ derivative of $g$ then $|\widehat{g}(k)|=|k|^{-N}|\widehat{g^{(N)}}(k)|$. Thus, using the estimate $|k-l|^{-N}\lesssim_{N} \max\{|k|,|l|\}^{-N}$, $N\in \mathbb{N}$, we infer 
\begin{equation*}
{
\begin{split}
I_{1}&\lesssim_{N}	
\left\| 
\sum_{\substack{l\in \mathbb{Z}\\ l\nsim k}} \sum_{\substack{n\in \mathbb{Z}\\	n\neq k}} |\widehat{g^{N}}(k-l)|\max\{|k|,|l|\}^{-N}|\widehat{g}(l-n)| \mathcal{M}_{l,n,k}(\tau)  f(n,\tau)  \right\|_{L^{2}_{\tau}l^{2}_{k}}.	
\end{split}}
\end{equation*}
Observe that if we request $\epsilon$ small enough such that $b-1+\epsilon<0$ then $\mathcal{M}_{l,n,k}(\tau)\leq |l|^{\delta}.$ Hence, taking $N>\delta$, we obtain
$$
\max\{|k|,|l|\}^{-N}\mathcal{M}_{l,n,k}(\tau)\leq \max\{|k|,|l|\}^{-N}|l|^{\delta}\leq |l|^{\delta-N}\leq 1,
$$
which implies
\begin{equation}\label{stiN8}
{
\begin{split}
I_{1}&\leq	
\left\| 
\sum_{\substack{l\in \mathbb{Z}\\ l\nsim k}} \sum_{\substack{n\in \mathbb{Z}\\	n\neq k}} |\widehat{g^{N}}(k-l)||\widehat{g}(l-n)|  f(n,\tau)  \right\|_{L^{2}_{\tau}l^{2}_{k}}\\
	&\leq	
\left\| \left[ |\widehat{g^{N}}(\cdot)|\ast \left( |\widehat{g}(\cdot)| \ast  f(\cdot,\tau) \right) \right](k)\right\|_{L^{2}_{\tau}l^{2}_{k}}\\
&\leq 
\left\|\widehat{g^{N}}(k)
\right\|_{l^{1}_{k}(\mathbb{Z})}
\left\|\widehat{g}(k) \right\|_{l^{1}_{k}(\mathbb{Z})}
\left\|f(k,\tau) \right\|_{L^{2}_{\tau}l^{2}_{k}}\\
&\lesssim_{g}\|u\|_{Z^{b}}\\
&\lesssim \|v\|_{Z^{b}_{T}},
\end{split}}
\end{equation}
where we have used Young's inequality and the fact that $\dfrac{1}{2}<b$ implies $\|f\|_{L^{2}_{\tau}l^{2}_{k}}\leq \|u\|_{Z^b}$.
Similarly, we can use the decay offered by the term $|\widehat{g}(l-n)|$ when $l\nsim n,$ to prove that 
\begin{equation*}
	I_{2}\lesssim_{g} \|v\|_{Z^{b}_{T}}.
\end{equation*}
	
It remains to deal with the estimate of $I_{3}.$ In this term we have that $k \sim l \sim n.$ Thus
\begin{equation}\label{stiN11}
|k|^{-\left(\frac{1}{2}-\epsilon\right)\delta}
|l|^{\delta}
\langle n \rangle^{-\frac{\delta}{2}} \sim 
\langle k \rangle^{\epsilon \delta}, 
\end{equation}
and it is necessary to recover $\epsilon \delta$ derivatives from the remaining terms in $\mathcal{M}$ (see \eqref{stiN4}). For this, we split the summation in $k$ into  high and low frequencies. Let $a$ be a positive constant such that Lemma \ref{stiN} holds for $|k|>a$. Then we may write
\begin{equation*}
{
\begin{split}
I_{3}
& \lesssim  \left( \sum_{\substack{k\in \mathbb{Z}^{\ast}\\ |k|\leq a}} \int_{\mathbb{R}}  \left[ 
\sum_{\substack{l\in \mathbb{Z}\\ k\sim l \sim n}} \sum_{\substack{n\in \mathbb{Z}\\	n\neq k}} |\widehat{g}(k-l)||\widehat{g}(l-n)| \mathcal{M}_{l,n,k}(\tau)  f(n,\tau)  \right]^{2}
d\tau \right)^{\frac{1}{2}}\\
&\quad + \left( \sum_{\substack{k\in \mathbb{Z}^{\ast}\\ |k|> a}} \int_{\mathbb{R}}  \left[ 
\sum_{\substack{l\in \mathbb{Z}\\ k\sim l \sim n}} \sum_{\substack{n\in \mathbb{Z}\\	n\neq k}} |\widehat{g}(k-l)||\widehat{g}(l-n)| \mathcal{M}_{l,n,k}(\tau)  f(n,\tau)  \right]^{2}
d\tau \right)^{\frac{1}{2}}\\
&=:I_{4}+I_{5}.
\end{split}}
\end{equation*}
If $|k|\leq a$ then from \eqref{stiN11} and $b-1+\epsilon<0$, we deduce that $\mathcal{M}$ is bounded by a constant depending on $a$. Therefore, a similar estimate as in \eqref{stiN8} yields
\begin{equation*}
{
\begin{split}
I_{4}
	& \lesssim_{a,\epsilon,\delta}
\left( \sum_{\substack{k\in \mathbb{Z}^{\ast}\\ |k|\leq a}} \int_{\mathbb{R}}  \left[ 
\sum_{\substack{l\in \mathbb{Z}\\ k\sim l \sim n}} \sum_{\substack{n\in \mathbb{Z}\\	n\neq k}} |\widehat{g}(k-l)||\widehat{g}(l-n)| 
f(n,\tau)  \right]^{2}
d\tau \right)^{\frac{1}{2}}\lesssim_{g}\|v\|_{Z^{b}_{T}}.
\end{split}}
\end{equation*}
	
To estimate $I_{5}$ we use Lemma \ref{stiN} and the fact that $\displaystyle{-b<-\frac{1}{2}<b-1<b-1+\epsilon<0}$ to obtain
\begin{equation*}
{
\begin{split}
\left\langle\frac{\tau-\lambda_{k}} {\langle k\rangle^{\delta}} \right\rangle^{b-1+\epsilon}
\left\langle\frac{\tau-\lambda_{n}} {\langle n\rangle^{\delta}} \right\rangle^{-b}
	&\leq 
\left\langle\frac{\tau-\lambda_{k}} {\langle k\rangle^{\delta}} \right\rangle^{b-1+\epsilon}
\left\langle\frac{\tau-\lambda_{n}} {\langle n\rangle^{\delta}} \right\rangle^{b-1+\epsilon}\\
	&\leq 
\max\left\{ \left\langle \frac{\tau-\lambda_{k}}{\langle k \rangle^{\delta} } \right\rangle, \left\langle \frac{\tau-\lambda_{n}}{\langle n \rangle^{\delta} } \right\rangle \right \}^{b-1+\epsilon}\\
	&\lesssim_{\beta,\delta,m}  \max\left\{ \langle k \rangle, \langle n \rangle \right\}^{(2m-\delta)(b-1+\epsilon )}\\
	&\sim \langle k \rangle^{(2m-\delta)(b-1+\epsilon )}.
\end{split}}
\end{equation*}
Thus, in this case
$
\mathcal{M}\lesssim 	\langle k\rangle^{\epsilon \delta} \langle k \rangle^{(2m-\delta)(b-1+\epsilon )}\lesssim 1,
$
provided $\epsilon$ is sufficiently small. From this we obtain
\begin{equation*}
{
\begin{split}
I_{5}
\leq
\left( \sum_{\substack{k\in \mathbb{Z}^{\ast}\\ |k|> a}} \int_{\mathbb{R}}  \left[ 
\sum_{\substack{l\in \mathbb{Z}\\ k\sim l \sim n}} \sum_{\substack{n\in \mathbb{Z}\\	n\neq k}} |\widehat{g}(k-l)||\widehat{g}(l-n)| 
f(n,\tau)  \right]^{2}
d\tau \right)^{\frac{1}{2}}
\lesssim_{g}\|v\|_{Z^{b}_{T}},
\end{split}}
\end{equation*}
and the proof of the theorem is completed.
\end{proof}

\begin{prop}\label{stiR1}
Let $\delta, b\in \mathbb{R}$ be given with $\delta>0$ and $b\in \left(\frac{1}{2}, 1\right).$ Let $g$ as defined in \eqref{OPG1}. For  $0<T<1$ assume $v\in Z_{T}^{b}$. Then there exists $0<\epsilon<\frac{1}{2}$ such that
	$$\displaystyle{\|D^{-\delta \left(b-\frac{1}{2}\right) }  R(v)\|_{Z_{T}^{b-1}} \lesssim_{\epsilon,b,\delta, g} T^{\frac{3}{2}-b-\epsilon} \|v\|_{Z^{b}_{T}}.}$$
\end{prop}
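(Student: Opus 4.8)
The plan is to mirror the two-step scheme used in the proof of Theorem \ref{stiN1}: first reduce the time-restricted estimate to a global one on $Z^{\frac12-\epsilon}$ by combining an extension with the time-localization bound of Proposition \ref{locdamp7}, and then prove that global estimate by exploiting that $R$ is smoothing in $x$. Fix $\epsilon\in(0,\tfrac12)$ (to be taken small at the end) and, exactly as in Theorem \ref{stiN1}, choose $u\in Z^{b}$ with $u\equiv v$ on $[0,T]$ and $\|u\|_{Z^{b}}\le 2\|v\|_{Z^{b}_{T}}$. Because both $R$ and $D^{-\delta(b-\frac12)}$ act only in the space variable and $\eta_{T}\equiv1$ on $[0,T]$, the function $\eta_{T}\,D^{-\delta(b-\frac12)}R(u)$ coincides with $D^{-\delta(b-\frac12)}R(v)$ on $\mathbb{T}\times[0,T]$ and is therefore admissible in the restriction norm. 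Applying the second estimate of Proposition \ref{locdamp7} with the exponents $b-1$ and $\tfrac12-\epsilon$ (legitimate since $-\tfrac12<b-1\le\tfrac12-\epsilon<\tfrac12$, using $\tfrac12<b<1$) gives
\[
\|D^{-\delta(b-\frac12)}R(v)\|_{Z^{b-1}_{T}}\le\|\eta_{T}\,D^{-\delta(b-\frac12)}R(u)\|_{Z^{b-1}}\lesssim T^{\frac32-b-\epsilon}\,\|D^{-\delta(b-\frac12)}R(u)\|_{Z^{\frac12-\epsilon}}.
\]
Thus the matter reduces to the global bound $\|D^{-\delta(b-\frac12)}R(u)\|_{Z^{\frac12-\epsilon}}\lesssim_{g,\delta,b,\epsilon}\|u\|_{Z^{b}}$.

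To prove this I would use the explicit form \eqref{locdamp3}: for each fixed $t$, $R(u)(\cdot,t)$ is a linear combination of the fixed $C^{\infty}$ functions $1$, $g$ and $gD^{\delta}g$, with coefficients the bounded linear functionals $A(u)(t)=\int_{\mathbb{T}}gD^{\delta}(gu)\,dy$ and $B(u)(t)=\int_{\mathbb{T}}gu\,dy$ of $u(\cdot,t)$. The constant profile contributes only to the mode $k=0$, which the $Z$-norm ignores, while the two remaining profiles have Schwartz-decaying Fourier coefficients since $g$ (hence $gD^{\delta}g$) is smooth. Consequently $\widehat{R(u)}(k,\tau)$ factorizes as a rapidly decaying sequence in $k$ times the time-Fourier transform of $A(u)$ or $B(u)$. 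Since $\tfrac12-\epsilon\in(0,\tfrac12)$, I would bound the $Z^{\frac12-\epsilon}$ weight crudely by $\langle k\rangle^{(1-2\epsilon)\delta}\langle(\tau-\lambda_{k})/\langle k\rangle^{\delta}\rangle^{1-2\epsilon}\lesssim\langle\tau\rangle^{1-2\epsilon}+\langle\lambda_{k}\rangle^{1-2\epsilon}+\langle k\rangle^{(1-2\epsilon)\delta}$; the two polynomial-in-$k$ terms (note $\langle\lambda_{k}\rangle\lesssim\langle k\rangle^{2m+1}$) are absorbed by the rapid decay of the spatial coefficients, and the term $\langle\tau\rangle^{1-2\epsilon}$ reduces everything to estimating $\|A(u)\|_{H^{1/2-\epsilon}_{t}}$ and $\|B(u)\|_{H^{1/2-\epsilon}_{t}}$ by $\|u\|_{Z^{b}}$.

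Writing, for instance, $B(u)(t)=2\pi\sum_{n}\widehat{g}(-n)\,\widehat{u}(n,t)$ and using $\langle\tau\rangle^{1-2\epsilon}\lesssim\langle\tau-\lambda_{n}\rangle^{1-2\epsilon}+\langle\lambda_{n}\rangle^{1-2\epsilon}$ together with $\langle\tau-\lambda_{n}\rangle^{1-2\epsilon}\lesssim\langle n\rangle^{(1-2\epsilon)\delta}\langle(\tau-\lambda_{n})/\langle n\rangle^{\delta}\rangle^{1-2\epsilon}$, the inequality $\tfrac12-\epsilon<b$ lets me dominate the modulation factor by the $Z^{b}$ weight $\langle(\tau-\lambda_{n})/\langle n\rangle^{\delta}\rangle^{b}$; a Cauchy--Schwarz in $n$, with all surplus powers of $\langle n\rangle$ killed by $\widehat{g}(-n)$, then yields $\|B(u)\|_{H^{1/2-\epsilon}_{t}}\lesssim\|u\|_{Z^{b}}$. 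The functional $A(u)$ is handled identically, the extra factor $|n|^{\delta}$ coming from $D^{\delta}$ being harmless against the Schwartz decay of $\widehat{g}$. Choosing $\epsilon$ small enough that $\tfrac32-b-\epsilon>0$ then produces the stated contraction factor.

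\textbf{Main obstacle.} The crux is the global estimate, and within it the need to transfer the dispersion-adapted modulation weight from the output frequency $k$ (carrying $\lambda_{k}$) to the distinct input frequency $n$ (carrying $\lambda_{n}$) in the time variable $\tau$. Unlike the analysis of $N_{1}$, where Lemma \ref{stiN} supplied a quantitative lower bound on the maximal modulation, here it is the smoothing character of $R$—encoded in the rapid decay of the Fourier coefficients of the smooth function $g$ in \eqref{OPG1}—together with the margin $b>\tfrac12$ over the target regularity $\tfrac12-\epsilon$ that makes the $\tau$-integral converge and absorbs all the polynomial losses.
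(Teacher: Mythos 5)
Your argument follows the same skeleton as the paper's proof (which is only sketched there): first reduce, via Proposition \ref{locdamp7} applied with the exponents $b-1$ and $\tfrac12-\epsilon$, to the global bound $\|D^{-\delta(b-\frac12)}R(u)\|_{Z^{\frac12-\epsilon}}\lesssim\|u\|_{Z^{b}}$, and then exploit the finite-rank, smoothing structure of $R$ coming from the smoothness of $g$; the paper simply defers these details to Lemma 2 of \cite{Flores OH and Smith}, and your second step is a correct reconstruction of them. The only (harmless) imprecision is that the bound on the $Z^{\frac12-\epsilon}$ weight should read as the product $\langle k\rangle^{(1-2\epsilon)\delta}\bigl(\langle\tau\rangle^{1-2\epsilon}+\langle\lambda_{k}\rangle^{1-2\epsilon}\bigr)$ rather than a three-term sum, but since every polynomial power of $k$ is absorbed by the rapid decay of $\widehat{g}$ this does not affect the conclusion.
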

\begin{proof}
	This is consequence of Proposition \ref{locdamp7}, Lemma \ref{locdamp5}, and the embedding $Z^{b} \hookrightarrow Z^{\frac{1}{2}-\epsilon}$. Since the proof is similar to that of Lemma 2 in   \cite{Flores OH and Smith}, we omit the details.
\end{proof}

\subsection{Nonlinear estimates}
We start this subsection deducing a key bilinear estimate which is fundamental to estimate the nonlinear term $\partial_{x}(v^{2})$  in the dissipation-normalized Bourgain spaces. As in the proof of Theorem \ref{stiN1}, at some point we need to estimate the term 
\begin{equation}\label{bilin}
	\|D^{-\delta \left(b-\frac{1}{2}\right) }  \partial_{x}(uv)\|_{Z^{b-1+\epsilon}}
\end{equation}
for some $\epsilon>0$. But, from Proposition \ref{prop2}, we have $(Z^{b-1+\epsilon})^{\ast}=Z^{1-b-\epsilon}.$ Thus, we can use duality and Plancherel's theorem to rewrite \eqref{bilin} in the following form
\begin{equation}\label{bilin1}
	{
		\begin{split}
			\|D^{-\delta \left(b-\frac{1}{2}\right) }  \partial_{x}(uv)\|_{Z^{b-1+\epsilon}}
		=
			\sup _{w\in S_{\epsilon}^b} \left|  
			\int_{\Gamma} k_{3}|k_{3}|^{-\delta \left(b-\frac{1}{2}\right)}
			\widehat{u}(k_{2},\tau_{2}) \widehat{v}(k_{1},\tau_{1})\;
			\widehat{w}(k_{3},\tau_{3}) \; dS \right|,\\
	\end{split}}
\end{equation}
where
	$$\Gamma:= \{(\tau_{1},\tau_{2},\tau_{3},k_{1},k_{2},k_{3}) :
\tau_{j}\in \mathbb{R}, k_{j}\in \mathbb{Z},\; \tau_{1}+\tau_{2}+\tau_{3}=0,\; k_{1}+k_{2}+k_{3}=0, k_{1}k_{2}k_{3}\neq 0 \}$$
 and  $dS$ is the inherited measure on the plane $\tau_{1}+\tau_{2}+\tau_{3}=0$. In addition, $S_{\epsilon}^b$ is the unit sphere in $Z^{1-b-\epsilon}$, that is,
 $$
 S_{\epsilon}^b=\{w\in Z^{1-b-\epsilon}; \|w\|_{Z^{1-b-\epsilon}}=1\}.
 $$

In what follows, to simplify the exposition, we adopt the following notation introduced by Tao in \cite{Tao}: for $j=1,2,3$, let $N_{j}>0$ be dyadic numbers such that  $|k_{j}|\sim N_{j}$. So, $N_{j}$ will measure the magnitude of frequencies of the waves. It is convenient to introduce the quantities $N_{\max}\geq N_{\med}\geq N_{\min}$ to be the maximum, median, and minimum of $N_{1},N_{2},N_{3}$, respectively.  Similarly,  let  $L_{j}>0$ be dyadic numbers such that  $\displaystyle{\left|\frac{\tau_{j}-\lambda_{k_{j}}} {\langle k_{j}\rangle^{\delta}} \right| \sim L_{j}  }$ so that $L_j$ is the  $j^{th}$ ``modulational'' frequency. We also set $L_{\max}\geq L_{\med}\geq L_{\min}$ to denote the maximum, median, and minimum of $L_{1},L_{2},L_{3}$, respectively.

The following two lemmas will be useful to prove our main estimate.

\begin{lem}\label{bilin3}
Let $ k_{1},k_{2},k_{3} \in \mathbb{Z}$ be given with $k_{1}+k_{2}+k_{3}=0$ and $k_{1}k_{2}k_{3}\neq 0.$  If $\lambda_{k_{j}}= -\beta k_{j}|k_{j}|^{2m}+\alpha k_{j} |k_{j}|^{2r}-2 \mu k_{j}$ then
	$$\left|\sum_{j=1}^{3}\lambda_{k_{j}}\right|
	\gtrsim_{\beta,m} N_{\max}^{2m} \cdot N_{\min},$$
	provided 
		$N_{\max}$ is sufficiently large.
\end{lem}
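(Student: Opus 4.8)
The plan is to exploit the exact cancellation forced by the constraint $k_1+k_2+k_3=0$ and then to pit the leading dispersion of order $2m$ against the lower-order dispersion of order $2r$. Writing $\phi_s(k):=k|k|^{s}=\operatorname{sgn}(k)|k|^{s+1}$, which is an odd function of $k$, the constraint first kills the linear part since $-2\mu(k_1+k_2+k_3)=0$, leaving
$$\sum_{j=1}^{3}\lambda_{k_j}=-\beta\sum_{j=1}^{3}\phi_{2m}(k_j)+\alpha\sum_{j=1}^{3}\phi_{2r}(k_j).$$
Because $\phi_s$ is odd and the constraint is invariant under the simultaneous sign flip $(k_1,k_2,k_3)\mapsto(-k_1,-k_2,-k_3)$, this expression is odd under that flip, so its absolute value and all the $N_j$ are unchanged. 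Since the three integers cannot all share the same sign, I may therefore assume without loss of generality that $k_1,k_2>0$ and $k_3=-(k_1+k_2)<0$. In this configuration $|k_3|=k_1+k_2$ is the largest magnitude, so $N_{\max}\sim N_{\med}=\max\{k_1,k_2\}$ and $N_{\min}=\min\{k_1,k_2\}$.

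Next I would reduce the whole problem to the elementary function $F_s(a,b):=a^{s+1}+b^{s+1}-(a+b)^{s+1}$ for $a,b>0$ and $s>0$. A direct computation gives $\sum_j\phi_{2m}(k_j)=F_{2m}(k_1,k_2)$ and $\sum_j\phi_{2r}(k_j)=F_{2r}(k_1,k_2)$, both negative because $(a+b)^{s+1}>a^{s+1}+b^{s+1}$ for $s>0$. Hence
$$\left|\sum_{j=1}^{3}\lambda_{k_j}\right|=\left|\beta\,|F_{2m}(k_1,k_2)|-\alpha\,|F_{2r}(k_1,k_2)|\right|.$$
The core of the argument is a two-sided control of $|F_s|$. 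Setting $a=N_{\min}\le b=N_{\med}$ and using $(a+b)^{s+1}-b^{s+1}=(s+1)\int_b^{a+b}t^{s}\,dt$, the monotonicity of $t^s$ yields the lower bound $|F_s(a,b)|\ge (s+1)b^{s}a-a^{s+1}\ge s\,a\,b^{s}$ (the last step using $a^{s+1}\le a\,b^{s}$), and the upper bound $|F_s(a,b)|\le (s+1)(a+b)^{s}a\le (s+1)2^{s}a\,b^{s}$. Taking $s=2m$ gives $|F_{2m}|\gtrsim_{m}N_{\min}N_{\max}^{2m}$, and taking $s=2r$ gives $|F_{2r}|\lesssim_{r}N_{\min}N_{\max}^{2r}$.

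Finally I would combine these two bounds. Since $m>r$, the ratio of the two contributions is governed by $N_{\max}^{2(m-r)}\to\infty$; concretely, for $N_{\max}$ large enough (with a threshold depending on $\alpha,\beta,m,r$) one has $\alpha\,|F_{2r}|\le\tfrac12\,\beta\,|F_{2m}|$, whence
$$\left|\sum_{j=1}^{3}\lambda_{k_j}\right|\ge\tfrac12\,\beta\,|F_{2m}(k_1,k_2)|\gtrsim_{\beta,m}N_{\max}^{2m}\,N_{\min},$$
which is the desired estimate. The main obstacle is precisely the competition inside $\beta|F_{2m}|-\alpha|F_{2r}|$: one must rule out that the lower-order term cancels the leading one, which is exactly where the hypothesis $r<m$ together with the condition that $N_{\max}$ be sufficiently large is needed. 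Everything else — the vanishing of the $\mu$ term, the reduction of the sign configuration by oddness, the observation that $N_{\max}\sim N_{\med}$, and the elementary two-sided estimates on $F_s$ — is routine.
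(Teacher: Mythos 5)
Your proof is correct, and it takes a genuinely different route from the paper's. The paper fixes the ordering $|k_1|\ge|k_2|\ge|k_3|$, keeps the full symbol $h(x)=\beta x^{2m+1}-\alpha x^{2r+1}$ intact, and applies the Mean Value Theorem to $h(|k_1|)-h(|k_2|)$; it then needs a case analysis ($N_{\max}\gg N_{\min}$ versus $N_{\max}\sim N_{\min}$, the latter requiring a second application of the MVT and a sign argument to discard a term), and in each case it absorbs the $\alpha$-contribution by arguing that a parenthesized correction factor approaches $1$. You instead decompose by homogeneity: after the oddness reduction to $k_1,k_2>0$, $k_3=-(k_1+k_2)$, you prove the clean two-sided bound $s\,ab^{s}\le |F_s(a,b)|\le (s+1)2^{s}ab^{s}$ for $F_s(a,b)=a^{s+1}+b^{s+1}-(a+b)^{s+1}$ (with $a=\min\{k_1,k_2\}$, $b=\max\{k_1,k_2\}\sim N_{\max}$), apply it once with $s=2m$ for the lower bound and once with $s=2r$ for the upper bound, and conclude by domination since $N_{\max}^{2r-2m}\to 0$. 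This buys you a single uniform argument with no case split and makes completely explicit where the hypothesis $r<m$ and the largeness of $N_{\max}$ enter (namely, in ruling out cancellation between $\beta|F_{2m}|$ and $\alpha|F_{2r}|$), at the cost of an extra relabeling step justified by the symmetry of the conclusion in the indices; the paper's version, while more ad hoc, keeps the labeling conventions that are reused verbatim in the subsequent Lemma \ref{bilin19}. All your individual estimates check out: the integral representation $(a+b)^{s+1}-b^{s+1}=(s+1)\int_b^{a+b}t^{s}\,dt$ with $t^{s}$ increasing gives both bounds, $a^{s+1}\le ab^{s}$ closes the lower bound, and the final constant depends only on $\beta$ and $m$ as claimed.
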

\begin{proof}
	Without loss of generality, we assume that $|k_{1}|\geq |k_{2}| \geq |k_{3}|.$ In view of the identity $k_{1}+k_{2}+k_{3}=0$ we infer that both $k_{2}$ and $k_{3}$ share the same sign which is opposite to that of $k_{1}.$ Moreover, $|k_1|\sim|k_2|$ and the identity $|k_{3}|=|k_{1}|-|k_{2}|$ holds.
	
Let us assume that $k_1<0$ (the case $k_1>0$ may be handled similarly). Hence we must have $k_2>0$ and $k_3>0$.
We now split into two cases.
	
\noindent	\textbf{Case 1.} $|k_{1}| \sim |k_{2}|\gg |k_{3}|.$ 
	
From the Mean Value Theorem	there exists 
$k^{\ast} \in (|k_{2}|,|k_{1}|)$ such that
\begin{equation*}
{
\begin{split}
\left|\sum_{j=1}^{3}\lambda_{k_{j}}\right|
	&= 
\left| \beta |k_{1}|^{2m+1}-\alpha |k_{1}|^{2r+1} -(\beta|k_{2}|^{2m+1}-\alpha |k_{2}|^{2r+1}) -\beta |k_{3}|^{2m+1}+\alpha |k_{3}|^{2r+1}\right|\\
	&= 
\left| \left(\beta (2m+1) (k^{\ast})^{2m}-\alpha (2r+1) (k^{\ast})^{2r} \right) ( |k_{1}|-|k_{2}|) -\beta |k_{3}|^{2m+1}+\alpha |k_{3}|^{2r+1}\right|\\
	&= 
\left| \beta (2m+1) (k^{\ast})^{2m} \left(1-\frac{\alpha (2r+1) (k^{\ast})^{2(r-m)}}{\beta (2m+1)} \right)  |k_{3}| -\beta |k_{3}|^{2m+1}+\alpha |k_{3}|^{2r+1}\right|.\\
\end{split}}
\end{equation*}	
Note that for $|k_1|$ (and hence $|k_2|$) large, we have $(k^*)^{2(r-m)}$ small. Thus, using that $k^*\sim k_1$ we obtain
\begin{equation*}
{
\begin{split}
\left|\sum_{j=1}^{3}\lambda_{k_{j}}\right|
	&\gtrsim
|k_1|^{2m}  |k_{3}| -\beta |k_{3}|^{2m+1}+\alpha |k_{3}|^{2r+1}\\
	&\geq 
|k_{1}|^{2m} \left(1  -\beta \left(\frac{|k_{3}|}{|k_{1}|}\right)^{2m} \right)|k_{3}| \\
	&\gtrsim_{\beta,m} N_{\max}^{2m} \cdot N_{\min},
\end{split}}
\end{equation*}	
where we used that $|k_1|\gg|k_3|$ is the last inequality.
	
\noindent	\textbf{Case 2.} $|k_{1}| \sim |k_{2}| \sim |k_{3}|.$ 
	
Using that $k_2=-k_1-k_3$, from the Mean Value Theorem	there exist
$k^{\ast} \in (|k_{2}|,|k_{1}|)$ and
$k^{\ast \ast} \in (|k_{3}|,|k_{2}|)$ such that
\begin{equation}\label{bilin12}
{
\begin{split}
\left|\sum_{j=1}^{3}\lambda_{k_{j}}\right|
	&= 
\bigg|-k_{1} \Big[\beta |k_{1}|^{2m}-\alpha |k_{1}|^{2r} -(\beta|k_{2}|^{2m}-\alpha |k_{2}|^{2r}) \Big] \\
	&\quad+
k_{3} \Big[\beta |k_{2}|^{2m}-\alpha |k_{2}|^{2r} -(\beta|k_{3}|^{2m}-\alpha |k_{3}|^{2r}) \Big] \bigg|\\
	&= 
\bigg|	|k_{1}| \Big(2m\beta (k^{\ast})^{2m-1}-2r\alpha (k^{\ast})^{2r-1} \Big) ( |k_{1}|-|k_{2}|)\\  
&\quad+ 
|k_{3}| \Big(2m\beta  (k^{\ast \ast})^{2m-1}-2r\alpha (k^{\ast \ast})^{2r-1} \Big) ( |k_{2}|-|k_{3}|) \bigg|
				\\
	&= 
\bigg|2m\beta	|k_{1}| (k^{\ast})^{2m-1} \left(1-\frac{2r\alpha  (k^{\ast})^{2(r-m)}}{2m\beta} \right)  |k_{3}|\\ 
	&\quad+	2m\beta|k_{3}| (k^{\ast \ast})^{2m-1} \left(1-\frac{2r\alpha (k^{\ast \ast})^{2(r-m)}}{2m\beta} \right)  (|k_{2}|-|k_{3}|)\bigg|.
				\\
\end{split}}
\end{equation}
Since $k^*$ and $k^{**}$ are large, both terms on the right-hand side of \eqref{bilin12} are positive. So, we can discard the second one and use that $k^*\sim k_1$ to obtain
\begin{equation*}
{
\begin{split}
\left|\sum_{j=1}^{3}\lambda_{k_{j}}\right|
	\gtrsim
|k_{1}| |k_{1}|^{2m-1}   |k_{3}|
\gtrsim_{\beta,m} N_{\max}^{2m} \cdot N_{\min}. 
\end{split}}
\end{equation*}
	This completes the proof of the lemma.
\end{proof}

\begin{lem}\label{bilin19}
Assume $\delta>0$.	Let  $ \tau_{1},\tau_{2},\tau_{3} \in \mathbb{R}$ and $ k_{1},k_{2},k_{3} \in \mathbb{Z}$ be given with  $\tau_{1}+\tau_{2}+\tau_{3}=0,$ $k_{1}+k_{2}+k_{3}=0,$ and $k_{1}k_{2}k_{3}\neq 0.$  Then
	\begin{equation}\label{bilin20}
		L_{\max}
		\gtrsim_{\beta,m, \delta} N_{\max}^{2m-\delta} \cdot N_{\min},
	\end{equation}	 
	provided $N_{\max}$ is sufficiently large.

	Furthermore, if $L_{\max}$ occurs at the same index as $N_{\min},$ i.e., $N_{j_{0}}=N_{\min}$ and $L_{j_{0}}=L_{\max}$ for some $j_{0}\in \{1,2,3\},$ then we have 
		\begin{equation}\label{bilin26}
			L_{\max}
			\gtrsim_{\beta,m, \delta} N_{\max}^{2m} \cdot N_{\min}^{1-\delta},
		\end{equation}
	whether
		 \begin{equation}\label{bilin22.1}
		 	\max\left\{L_{\min},L_{\med},L_{\max} N_{\min}^{\delta} N_{\max}^{-\delta} \right\}= L_{\max} N_{\min}^{\delta} N_{\max}^{-\delta},
		 \end{equation}
and
		\begin{equation}\label{bilin52}
			L_{\max} \geq L_{\med}
			\gtrsim_{\beta,m, \delta} N_{\max}^{2m-\delta} \cdot N_{\min},
		\end{equation}
	whether \begin{equation}\label{bilin22.2}
		\max\left\{L_{\min},L_{\med},L_{\max} N_{\min}^{\delta} N_{\max}^{-\delta} \right\}\neq L_{\max} N_{\min}^{\delta} N_{\max}^{-\delta}.  
	\end{equation}
		
\end{lem}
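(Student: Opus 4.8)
The plan is to combine the algebraic resonance identity coming from $\tau_1+\tau_2+\tau_3=0$ with the dispersive lower bound of Lemma \ref{bilin3}. First I would record the key identity: since the $\tau_j$ sum to zero,
$$
\sum_{j=1}^{3}(\tau_{j}-\lambda_{k_{j}})=-\sum_{j=1}^{3}\lambda_{k_{j}},
$$
so that Lemma \ref{bilin3} gives
$$
N_{\max}^{2m}\cdot N_{\min}\lesssim_{\beta,m}\left|\sum_{j=1}^{3}\lambda_{k_{j}}\right|=\left|\sum_{j=1}^{3}(\tau_{j}-\lambda_{k_{j}})\right|\leq\sum_{j=1}^{3}|\tau_{j}-\lambda_{k_{j}}|.
$$
Because $k_{1}k_{2}k_{3}\neq 0$ we have $\langle k_{j}\rangle\sim N_{j}$, and the definition of $L_{j}$ yields $|\tau_{j}-\lambda_{k_{j}}|\sim N_{j}^{\delta}L_{j}$. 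Hence the central inequality
$$
N_{\max}^{2m}\cdot N_{\min}\lesssim\sum_{j=1}^{3}N_{j}^{\delta}L_{j}.
$$

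For \eqref{bilin20} I would simply bound each term by $N_{j}^{\delta}L_{j}\leq N_{\max}^{\delta}L_{\max}$; since there are only three summands this gives $N_{\max}^{2m}\cdot N_{\min}\lesssim N_{\max}^{\delta}L_{\max}$, which is precisely \eqref{bilin20} after dividing by $N_{\max}^{\delta}$. For the sharper estimates I would exploit the hypothesis that the index $j_{0}$ realizing $L_{\max}$ is the one carrying $N_{j_{0}}=N_{\min}$. Isolating that term, the two remaining indices carry the frequencies $\{N_{\med},N_{\max}\}$ and the modulations $\{L_{\min},L_{\med}\}$, so each of them is at most $N_{\max}^{\delta}L_{\med}$. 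Substituting into the central inequality produces
$$
N_{\max}^{2m}\cdot N_{\min}\lesssim N_{\min}^{\delta}L_{\max}+N_{\max}^{\delta}L_{\med}.
$$

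Dividing by $N_{\max}^{\delta}$ and using $a+b\lesssim\max\{a,b\}$ I obtain
$$
N_{\max}^{2m-\delta}\cdot N_{\min}\lesssim\max\left\{L_{\max}N_{\min}^{\delta}N_{\max}^{-\delta},\,L_{\med}\right\}\leq\max\left\{L_{\min},L_{\med},L_{\max}N_{\min}^{\delta}N_{\max}^{-\delta}\right\}.
$$
The conclusion then follows by a dichotomy on which term realizes this maximum. Under \eqref{bilin22.1} the maximum is $L_{\max}N_{\min}^{\delta}N_{\max}^{-\delta}$, and rearranging $L_{\max}N_{\min}^{\delta}N_{\max}^{-\delta}\gtrsim N_{\max}^{2m-\delta}N_{\min}$ gives \eqref{bilin26}; under \eqref{bilin22.2} the maximum must be $L_{\med}$ (as $L_{\min}\leq L_{\med}$), whence $L_{\max}\geq L_{\med}\gtrsim N_{\max}^{2m-\delta}N_{\min}$, which is \eqref{bilin52}.

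I do not expect a serious obstacle. The only points requiring care are the normalization $|\tau_{j}-\lambda_{k_{j}}|\sim N_{j}^{\delta}L_{j}$ built into the dissipation-normalized modulation, and the precise bookkeeping of which of $L_{\min},L_{\med}$ attaches to which frequency among the two non-extremal indices. The crux of the argument is really the observation that the quantity $L_{\max}N_{\min}^{\delta}N_{\max}^{-\delta}$ appearing in the hypotheses \eqref{bilin22.1}--\eqref{bilin22.2} is exactly the $j_{0}$-term $N_{\min}^{\delta}L_{\max}$ normalized by $N_{\max}^{\delta}$, which is what makes the case split transparent and matches the two alternative conclusions.
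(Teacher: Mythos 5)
Your proposal is correct and follows essentially the same route as the paper: combine the lower bound of Lemma \ref{bilin3} on $\bigl|\sum_{j}\lambda_{k_{j}}\bigr|$ with the triangle inequality applied to $\sum_{j}(\tau_{j}-\lambda_{k_{j}})$, weight the $N_{\min}$-term by $N_{\min}^{\delta}N_{\max}^{-\delta}$ after normalizing by $N_{\max}^{\delta}$, and conclude by the dichotomy on which of $L_{\min},L_{\med},L_{\max}N_{\min}^{\delta}N_{\max}^{-\delta}$ realizes the maximum. The only cosmetic difference is that you carry the factors $N_{j}^{\delta}L_{j}$ and divide by $N_{\max}^{\delta}$ at the end, whereas the paper normalizes by $\langle k_{1}\rangle^{\delta}$ from the outset.
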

\begin{proof}
	Without loss of generality  we also assume $|k_{1}|\geq |k_{2}| \geq |k_{3}|.$ Thus, $N_{\max}= N_{1}$ and $N_{\min}= N_{3}.$  As we have seen in the proof of Lemma \ref{bilin3}, in any case we always have $|k_{1}|\sim |k_{2}|.$ Hence, $N_{1}\sim N_{2}$ and
	$\displaystyle{ \frac{N_{3}}{N_{1}}=\frac{N_{\min}}{N_{\max}}\leq 1}.$
Thus, if  $\lambda_{k_{j}}= -\beta k_{j}|k_{j}|^{2m}+\alpha k_{j} |k_{j}|^{2r}-2 \mu k_{j},$ using that $\tau_{1}+\tau_{2}+\tau_{3}=0$ we may write
\begin{equation}\label{bilin22}
{
\begin{split}
\Bigg|\frac{\sum\limits_{j=1}^{3}\lambda_{k_{j}}}{\langle k_{1}\rangle^{\delta}}\Bigg|
	&=
\left|\frac{\tau_{1}+\lambda_{k_{1}}}{\langle k_{1}\rangle^{\delta}  } + \frac{\tau_{2}+\lambda_{k_{2}}}{\langle k_{1}\rangle^{\delta}  } + \frac{\tau_{3}+\lambda_{k_{3}}}{\langle k_{1}\rangle^{\delta}  } \right|\\
	& \lesssim 
\left|\frac{\tau_{1}+\lambda_{k_{1}}}{\langle k_{1}\rangle^{\delta}  }\right|
+ \left|\frac{\tau_{2}+\lambda_{k_{2}}}{\langle k_{2}\rangle^{\delta}  }\right|
+\left|\frac{\tau_{3}+\lambda_{k_{3}}}{\langle k_{3}\rangle^{\delta}  }\right| 
\frac{ \langle k_{3}\rangle^{\delta}} {\langle k_{1}\rangle^{\delta}}\\
	& \lesssim
L_{1}+L_{2}+L_{3} N_{\min}^{\delta} N_{\max}^{-\delta}\\
& \leq \max\left\{L_{1},L_{2},L_{3}N_{\min}^{\delta} N_{\max}^{-\delta}   \right\}\\
&  \lesssim L_{\max}.
\end{split}}	
\end{equation}

On the other hand, from Lemma \ref{bilin3} we infer
\begin{equation}\label{bilin23}
{
\begin{split}
\Bigg|\frac{\sum\limits_{j=1}^{3}\lambda_{k_{j}}} {\langle k_{1}\rangle^{\delta}}\Bigg|
\gtrsim_{\beta,m, \delta} \frac{ N_{\max}^{2m} \cdot N_{\min}} {N_{\max}^{\delta}}
= N_{\max}^{2m-\delta} \cdot N_{\min}.
\end{split}}
\end{equation}
	From  \eqref{bilin22} and \eqref{bilin23} we get
	\eqref{bilin20}.
	
	Now, note that if $L_{\max}=L_{3}$ and \eqref{bilin22.1} holds
	then
	\eqref{bilin22} and \eqref{bilin23} imply
	$${
		\begin{split}
			\frac{N_{\max}^{2m} \cdot N_{\min}}{N_{\max}^{\delta} }
			&\lesssim 
			L_{\max} N_{\min}^{\delta} N_{\max}^{-\delta}.
	\end{split}}$$
which gives \eqref{bilin26}. On the other hand, if \eqref{bilin22.1} does not occur then the maximum must be $L_{\med}$ and \eqref{bilin52} holds.
	This completes the proof of the lemma.
\end{proof}

Next result is our main bilinear estimate.

\begin{thm}\label{bilinear}
	Assume  $0<\delta \leq 1$  satisfies
	\begin{equation*}
		2m>2-\delta.
	\end{equation*}	
	For $0<T<1$ and $b$ satisfying
	\begin{equation}\label{minb}
		\frac{1}{2}<b<\min\left\{1,  \frac{2m-\frac{1}{2}}{2m-\delta+1} \right\},
	\end{equation}
	assume that  $\tilde{u},\tilde{v}\in Z^{b}_{T}$. Then there exists $\epsilon>0$ small  such that
	\begin{equation*}
		\|D^{-\delta \left(b-\frac{1}{2}\right) }  \partial_{x}(\tilde{u}\tilde{v})\|_{Z_{T}^{b-1}}
		\lesssim_{\epsilon,b} T^{\epsilon} \|\tilde{u}\|_{Z_{T}^{b}} \|\tilde{v}\|_{Z_{T}^{b}}.	
	\end{equation*}
\end{thm}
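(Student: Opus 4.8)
The plan is to follow the by-now standard route for periodic bilinear Bourgain estimates, adapted to the dissipation-normalized spaces $Z^{b}$, reducing the statement to a single weighted trilinear form and then exploiting the resonance relation of Lemma \ref{bilin19}. First I would extend $\tilde{u},\tilde{v}$ to $u,v\in Z^{b}$ with $\|u\|_{Z^{b}}\leq 2\|\tilde{u}\|_{Z^{b}_{T}}$ and $\|v\|_{Z^{b}}\leq 2\|\tilde{v}\|_{Z^{b}_{T}}$, and, exactly as in the proof of Theorem \ref{stiN1}, insert the cut-off $\eta_{T}$ and invoke Proposition \ref{locdamp7} to pass to the slightly higher index $Z^{b-1+\epsilon}$ at the cost of a factor $T^{\epsilon}$; choosing $\epsilon>0$ small keeps $-\frac{1}{2}<b-1+\epsilon<\frac{1}{2}$, so this step is legitimate. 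It then suffices to prove
\[
\|D^{-\delta\left(b-\frac{1}{2}\right)}\partial_{x}(uv)\|_{Z^{b-1+\epsilon}}\lesssim \|u\|_{Z^{b}}\|v\|_{Z^{b}}.
\]
Using Proposition \ref{prop2}(ii), which gives $(Z^{b-1+\epsilon})^{\ast}=Z^{1-b-\epsilon}$, together with Plancherel's theorem, I would rewrite the left-hand side as the supremum over the unit sphere $S^{b}_{\epsilon}\subset Z^{1-b-\epsilon}$ of the trilinear form \eqref{bilin1}. Introducing the nonnegative weighted functions $f_{u}=\langle k\rangle^{\frac{\delta}{2}}\langle(\tau-\lambda_{k})\langle k\rangle^{-\delta}\rangle^{b}|\widehat{u}|$, analogously $f_{v}$, and $f_{w}=\langle k\rangle^{(1-b-\epsilon)\delta}\langle(\tau-\lambda_{k})\langle k\rangle^{-\delta}\rangle^{1-b-\epsilon}|\widehat{w}|$, normalizes the three inputs to unit $L^{2}_{\tau}\ell^{2}_{k}$ norm and reduces everything to bounding $\int_{\Gamma}\mathcal{M}\,f_{v}f_{u}f_{w}\,dS$, where
\[
\mathcal{M}:=|k_{3}|^{1-\delta\left(b-\frac{1}{2}\right)}\langle k_{1}\rangle^{-\frac{\delta}{2}}\langle k_{2}\rangle^{-\frac{\delta}{2}}\langle k_{3}\rangle^{-\delta(1-b-\epsilon)}L_{1}^{-b}L_{2}^{-b}L_{3}^{-(1-b-\epsilon)}.
\]

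Next I would perform the dyadic decomposition in both the spatial frequencies $|k_{j}|\sim N_{j}$ and the modulations $L_{j}$ introduced before the statement, ordering $N_{\max}\geq N_{\med}\geq N_{\min}$ and $L_{\max}\geq L_{\med}\geq L_{\min}$. On the low-frequency region $N_{\max}\lesssim 1$ the multiplier is bounded and the estimate is immediate from Cauchy--Schwarz, so the heart of the matter is $N_{\max}\gg 1$, where Lemmas \ref{bilin3} and \ref{bilin19} are available. On a fixed dyadic block I would bound $\mathcal{M}$ by its maximum $\mathcal{M}_{\mathrm{block}}$ and the remaining trilinear integral by a Cauchy--Schwarz plus lattice-point counting argument (the periodic analogue of the $L^{4}$ Strichartz estimate, as in \cite{Tao} and \cite{Flores OH and Smith}): peeling off the highest-frequency factor and counting the admissible frequency pairs ($\lesssim N_{\min}$ of them) together with the admissible time-interval (of length comparable to the smallest of the two remaining $L_{j}\langle k_{j}\rangle^{\delta}$) yields a counting factor of the schematic form $N_{\min}^{1/2}\,L_{\min}^{1/2}$ times the product of the localized $L^{2}$ norms, the $\langle k\rangle^{\delta}$ in the time-measure being precisely what the normalization weights $\langle k_{j}\rangle^{\frac{\delta}{2}}$ are designed to absorb. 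Summing the blocks is then a Schur-type argument in the dyadic parameters, for which it suffices that $\mathcal{M}_{\mathrm{block}}$ times the counting factor decays in $N_{\max}$ and in the $L_{j}$.

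The decisive step, and the main obstacle, is to verify this decay. Here I would use the resonance identity: since $\tau_{1}+\tau_{2}+\tau_{3}=0$, the quantity $\left|\sum_{j}\lambda_{k_{j}}\right|$ is controlled by $\langle k_{1}\rangle^{\delta}L_{\max}$ (this is the computation \eqref{bilin22}), while Lemma \ref{bilin3} bounds it from below by $N_{\max}^{2m}N_{\min}$; combining these is exactly Lemma \ref{bilin19}'s smoothing $L_{\max}\gtrsim_{\beta,m,\delta}N_{\max}^{2m-\delta}N_{\min}$, which must absorb the derivative loss $|k_{3}|^{1-\delta\left(b-\frac{1}{2}\right)}$ coming from $\partial_{x}D^{-\delta\left(b-\frac{1}{2}\right)}$. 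After reserving $L_{\min}^{1/2}$ for the counting, I would redistribute the surplus power of $L_{\max}$ against the frequency weights, splitting according to whether the extremal modulation sits at the minimal-frequency index, that is, whether \eqref{bilin22.1} holds (so that the stronger gain \eqref{bilin26} is available) or \eqref{bilin22.2} holds (so that one instead relies on the bound \eqref{bilin52} for $L_{\med}$). In each case one checks that the net exponent on $N_{\max}$ is strictly negative.

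Carrying out this bookkeeping is what forces the upper constraint in \eqref{minb}: the worst configuration is when the output frequency $k_{3}$ is comparable to $N_{\max}$ and $L_{\max}$ is realized at the low-frequency slot, and there the requirement that the derivative loss be strictly dominated by the modulation gain translates, after equating the exponents of $N_{\max}$, into $b<\frac{2m-1/2}{2m-\delta+1}$. I would note in passing that the standing hypothesis $2m>2-\delta$ is precisely the condition under which this upper bound exceeds $\frac{1}{2}$, so that the admissible range for $b$ in \eqref{minb} is nonempty. Once the net exponents are negative the dyadic sums converge by Cauchy--Schwarz in the dyadic parameters, and combining this with the $T^{\epsilon}$ gain obtained in the first reduction completes the proof.
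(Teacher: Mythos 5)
Your reduction --- extension to $Z^{b}$, the $T^{\epsilon}$ gain via Proposition \ref{locdamp7}, duality with $Z^{1-b-\epsilon}$, the weighted substitution, and the dyadic localization in $N_{j}$ and $L_{j}$ feeding into Lemmas \ref{bilin3} and \ref{bilin19} --- coincides with the paper's proof up to and including the formation of the block multiplier. The divergence, and the gap, lies in how you estimate the trilinear form on a fixed dyadic block. The paper uses no lattice-point counting: its key step is the claim \eqref{bilinear24}, proved by reserving from the multiplier a full weight $N_{\min}^{-\frac12-\epsilon}N_{j_1}^{-\frac{\delta}{2}}L_{j_1}^{-b'}$ (with $b'>\frac12$) on a single factor, placing that factor in $l^{1}_{k}L^{1}_{\tau}$ by Cauchy--Schwarz (the $\tau$-integral $\int\langle\sigma\rangle^{-2b'}d\sigma$ converging precisely because $b'>\frac12$, see \eqref{bilinear58}), and closing with Young's convolution inequality. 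The whole case analysis of Lemma \ref{bilinear27} is organized around producing exactly that reserved weight, and the threshold $b<\frac{2m-\frac12}{2m-\delta+1}$ emerges from condition \eqref{bilinear36} in every case.

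Your substitute --- Cauchy--Schwarz plus lattice-point counting with a claimed block gain of the schematic form $N_{\min}^{1/2}L_{\min}^{1/2}$ --- is asserted rather than proved, and it is exactly the step where the difficulty lives. For the symbol $\lambda_{k}=-\beta k|k|^{2m}+\alpha k|k|^{2r}-2\mu k$ with $2m$ non-integer, two competing dispersive terms, and modulations normalized by $\langle k\rangle^{\delta}$, the number of admissible frequencies is not $\lesssim N_{\min}$ in the relevant high-high-low configuration: for fixed $k_{3}$ with $|k_{3}|\sim N_{\min}$ there are $\sim N_{\max}$ pairs $(k_{1},k_{2})$ on a dyadic annulus, and cutting this down via the modulation constraint requires a quantitative transversality estimate for the resonance function that you never state. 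Moreover, taking your counting factor at face value and running the exponent bookkeeping yields a condition on $b$ strictly more restrictive than \eqref{minb} (for instance $m=\delta=1$ produces an empty range), so the numerology as described does not close; the sentence ``one checks that the net exponent on $N_{\max}$ is strictly negative'' is precisely the verification that is missing. To repair the argument, replace the counting step by the $l^{1}L^{1}$--Young argument of \eqref{bilinear24}--\eqref{bilinear58} and carry out the case analysis of Lemma \ref{bilinear27} to extract the reserved weight.
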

\begin{proof}
	Let $\epsilon>0$ to be chosen later.	 Let $u,v:\mathbb{T}\times \mathbb{R}\to \mathbb{R}$ be functions in $Z^{b}$ such that $\tilde{u}(t)\equiv u(t),$ (resp. $\tilde{v}(t)\equiv v(t)$) on $[0,T]$ with 
	$\|\tilde{u}\|_{Z^{b}}\leq 2 \|u\|_{Z_{T}^{b}}$ 
	(resp. $\|\tilde{v}\|_{Z^{b}}\leq 2 \|v\|_{Z_{T}^{b}}$). 
	
	For $\epsilon<\dfrac{1}{2}$ and $\dfrac{1}{2}<b<1$ we have $\displaystyle{-\frac{1}{2}<b-1<b-1+\epsilon <\frac{1}{2}}$. Thus, Proposition \ref{locdamp7} yields
	\begin{equation*}
		{\normalsize
			\begin{split}
				\|D^{-\delta \left(b-\frac{1}{2}\right) }  \partial_{x}(\tilde{u}\tilde{v})\|_{Z_{T}^{b-1}}	
				&\lesssim_{\eta,b,\epsilon}T^{\epsilon} \|D^{-\delta \left(b-\frac{1}{2}\right) }  \partial_{x}(uv)\|_{Z^{b-1+\epsilon}}.
		\end{split}}
	\end{equation*}
	So we need to estimate the right-hand side of the last inequality. Set $I:=\|D^{-\delta \left(b-\frac{1}{2}\right) }  \partial_{x}(uv)\|_{Z^{b-1+\epsilon}} $ and define
\begin{equation*}
\widehat{f}(k_{2},\tau_{2}):=|k_{2}|^{\frac{\delta}{2}} \left\langle\frac{\tau_{2}-\lambda_{k_{2}}} {\langle k_{2}\rangle^{\delta}} \right\rangle^{b}
|\widehat{u}(k_{2},\tau_{2})|,
\end{equation*}
\begin{equation*}
\widehat{g}(k_{1},\tau_{1}):=|k_{1}|^{\frac{\delta}{2}} \left\langle\frac{\tau_{1}-\lambda_{k_{1}}} {\langle k_{1}\rangle^{\delta}} \right\rangle^{b} 
|\widehat{v}(k_{1},\tau_{1})|,
\end{equation*}
and
\begin{equation*}
\widehat{h}(k_{3},\tau_{3}):=|k_{3}|^{1-b-\epsilon} \left\langle\frac{\tau_{3}-\lambda_{k_{3}}} {\langle k_{3}\rangle^{\delta}} \right\rangle^{1-b-\epsilon} 
|\widehat{w}(k_{3},\tau_{3})|.
\end{equation*}
From \eqref{bilin1} we get
\begin{equation}\label{bilinear4}
{
\begin{split}
I
&\leq
\sup_{w\in S_\epsilon^b }  \left( 
\int_{\Gamma}
\frac{  \widehat{f}(k_{2},\tau_{2})  \widehat{g}(k_{1},\tau_{1})
\widehat{h}(k_{3},\tau_{3})
|k_{3}|^{1-\delta \left(b-\frac{1}{2}\right)-1+b+\epsilon}}
{
|k_{1}|^{\frac{\delta}{2}} 
|k_{2}|^{\frac{\delta}{2}} 
\left\langle\frac{\tau_{1}-\lambda_{k_{1}}} {\langle k_{1}\rangle^{\delta}} \right\rangle^{b}
\left\langle\frac{\tau_{2}-\lambda_{k_{2}}} {\langle k_{2}\rangle^{\delta}} \right\rangle^{b}
\left\langle\frac{\tau_{3}-\lambda_{k_{3}}} {\langle k_{3}\rangle^{\delta}} \right\rangle^{1-b-\epsilon}
}
dS \right).
\end{split}}
\end{equation}
	
In order to estimate the above term we need localization in both frequencies and modulations. Therefore, we define a partition of unity as follows:  fix a smooth radial function $\varphi\in C_{0}^{\infty}(\mathbb{R})$ such that $	0\leq \varphi \leq 1$, $\varphi(\xi)=1$ if $|\xi|\leq1$, and $\text{supp}(\varphi) \subset \left\{\xi \in \mathbb{R}: \;  |\xi| \leq 2\right\}$.
For $i\in \mathbb{N}^{\ast},$ we define $$\phi(\xi)=\varphi(\xi)-\varphi(2\xi),\;\; \displaystyle{\phi_{2^{i}}(\xi)=\phi(2^{-i}\xi)},\;\; \text{ and}\;\; \displaystyle{\rho_{2^{i}}(\tau,k):=\phi\left(2^{-i} \frac{\tau-\lambda_{k}} {\langle k\rangle^{\delta}}  \right)}. $$ 
 Observe that $\text{supp}(\phi_{2^{i}})\subset \left\{ \xi\in \mathbb{R}: 2^{i-1}<|\xi|<2^{i+1} \right\}.$ By convention, we denote 
	$$\displaystyle{\phi_{1}(\xi)=\varphi(\xi)}\;\; \text{ and}\;\;
	\displaystyle{\rho_{1}(\tau,k):=\varphi\left( \frac{\tau-\lambda_{k}} {\langle k\rangle^{\delta}}  \right)}. $$
In what follows, any summation over capitalized variables such as $N$ and $L$ are presumed to be dyadic with $N,L\geq 1$, i.e., these variables range over the numbers of the form $\{2^{i}:i\in \mathbb{N}\}.$ Then, we have
$$\sum_{N\geq 1}\phi_{N}(\xi)=1 \quad \mbox{and}\quad \;\;\text{supp}(\phi_{N})\subset \left\{ \xi\in \mathbb{R}: \frac{N}{2}<|\xi|<2N \right\},\;\;N\geq 2. $$
Next, we  define the Littlewood-Paley multipliers by 
\begin{equation}\label{bilinear15}
	\widehat{P_{N}u}(k)=\phi_{N}(k)\widehat{u}(k),
\end{equation}
\begin{equation}\label{bilinear16}
	\widehat{Q_{L}u}(k,\tau)=\rho_{L}(k,\tau)\widehat{u}(k,\tau).
\end{equation}
It must be clear that in \eqref{bilinear15} the Fourier transform is taken only over the spatial variable while in \eqref{bilinear16} it is taken over both spatial and time variables.
	
Using these multipliers, we localize in frequencies and modulations and rewrite \eqref{bilinear4} as
\begin{equation*}
{
\begin{split}
I
	&\lesssim
\sup_{w\in S_\epsilon^b } \left(  \sum_{\substack{N_1,N_2,N_3\\ 
L_1,L_2,L_3 }} 
\int\limits_{\Gamma }
I_{N_1,N_2,N_3}^{L_1,L_2,L_3} \widehat{P_{N_{2}}Q_{L_{2}}f}(k_{2},\tau_{2}) \widehat{ P_{N_{1}}Q_{L_{1}} g}(k_{1},\tau_{1})
\widehat{ P_{N_{3}}Q_{L_{3}} h}(k_{3},\tau_{3})
dS \right),
\end{split}}
\end{equation*}
where 
$$
I_{N_1,N_2,N_3}^{L_1,L_2,L_3}:=\frac{N_{3}^{1-\delta \left(b-\frac{1}{2}\right)-1+b+\epsilon}}{	N_{1}^{\frac{\delta}{2}} 
N_{2}^{\frac{\delta}{2}} 
L_{1}^{b}
L_{2}^{b}
L_{3}^{1-b-\epsilon}}.
$$

The following lemma is the main ingredient to obtain our estimate.
	\begin{lem}\label{bilinear27}
	Under the assumptions of Theorem \ref{bilinear} there exists
		$\epsilon>0$ small  such that
		\begin{equation}\label{bilinear19}
			{\begin{split}
					I_{N_1,N_2,N_3}^{L_1,L_2,L_3} \lesssim_{b,\epsilon,\delta} 
					\frac{1 } 
					{N_{\min}^{\frac{1}{2}+\epsilon}  N_{j_{1}}^{\frac{\delta}{2}}	L_{j_{1}}^{b'} L_{\max}^{\epsilon}},		
			\end{split}}
		\end{equation}
		for some $j_{1}\in \{1,2,3\}$ and some $b'$ with $\frac{1}{2}<b'\leq b.$
	\end{lem}
	\begin{proof}
	Let us start by supposing $\epsilon<1-b$.
Then, since $\delta\leq 1$ we deduce
$$	1-\delta \left(b-\frac{1}{2}\right)-1+b+\epsilon =1-\frac{\delta}{2}+\epsilon \delta + (\delta-1)(1-b-\epsilon)
\leq1-\frac{\delta}{2}+\epsilon \delta,$$ 
which implies
		\begin{equation}\label{bilinear9}
			N_{3}^{1-\delta \left(b-\frac{1}{2}\right)-1+b+\epsilon}
			\leq
			N_{3}^{1-\frac{\delta}{2}+\epsilon \delta}.
		\end{equation}
Next we claim that
	\begin{equation}\label{bilinear14}
		\begin{split}
			I_{N_1,N_2,N_3}^{L_1,L_2,L_3}
			&\lesssim 
			\frac{	N_{3}^{1-\frac{\delta}{2}+\epsilon \delta }} 
			{N_{1}^{\frac{\delta}{2}}  N_{2}^{\frac{\delta}{2}} 	L_{\med}^{b} L_{\min}^{b} L_{\max}^{1-b-\epsilon}}.	
	\end{split}
\end{equation}
Indeed, in view of \eqref{bilinear9} it suffices to show that
\begin{equation}\label{bilinear14.1}
	\frac{1}{L_{1}^{b}L_{2}^{b}L_{3}^{1-b-\epsilon}}\lesssim \frac{1}{L_{\min}^{b} L_{\med}^{b} L_{\max}^{1-b-\epsilon}}.
\end{equation}
	In order to see that \eqref{bilinear14.1} holds, we assume $L_{\max}=L_{1},$ $L_{\med}=L_{2},$ and 
		$L_{\min}=L_{3}$ (the other (five) cases are treated similarly). First we write
		$$
		{
			\begin{split}
				\frac{1}{L_{1}^{b}L_{2}^{b}L_{3}^{1-b-\epsilon}}
				& = \frac{1} {\frac{L_{1}^{b}}{L_{\min}^{b}} \frac{L_{2}^{b}}{L_{\med}^{b}} \frac{L_{3}^{1-b-\epsilon}}{L_{\max}^{1-b-\epsilon}}L_{\min}^{b} L_{\med}^{b} L_{\max}^{1-b-\epsilon}  }	\\
				&=\frac{1}{L_{\min}^{b} L_{\med}^{b} L_{\max}^{1-b-\epsilon}} \left(\frac{L_{1}}{L_{3}}\right)^{1-2b-\epsilon}.
		\end{split}}
		$$
	Since $\displaystyle{\frac{L_{1}}{L_{3}}  = \frac{L_{\max}}{L_{\min}} \gtrsim 1}$ and 	$1-2b-\epsilon<0,$ estimate
		\eqref{bilinear14.1} follows and the proof of the claim is completed.

	So, the general strategy is to estimate the right-hand side of \eqref{bilinear14}.
		First, we assume that $N_{\max}\geq a$, where $a$ is a sufficiently large constant such that Lemmas \ref{bilin3} and \ref{bilin19} hold. We analyse two cases:\\

	\noindent	{\bf Case 1:} $N_{\min}=N_{j_{0}}$ and $L_{\max}=L_{j_{0}^{\ast}}$ for some $j_{0},j_{0}^{\ast}\in \{1,2,3\}$ with $j_{0}\neq j_{0}^{\ast}.$ \\

If we request $	2\epsilon< 1-b$, from \eqref{bilinear14} and \eqref{bilin20} we obtain
		\begin{equation*}
			{
				\begin{split}
						I_{N_1,N_2,N_3}^{L_1,L_2,L_3}
					& \lesssim
					\frac{N_{3}^{1-\delta+\epsilon \delta} N_{3}^{\frac{\delta}{2}} } 
					{N_{1}^{\frac{\delta}{2}}  N_{2}^{\frac{\delta}{2}} 	L_{\med}^{b} L_{\min}^{b} L_{\max}^{1-b-2\epsilon}  
						L_{\max}^{\epsilon}  }\\
					& \lesssim 
					\frac{N_{\max}^{\frac{\delta}{2}} N_{\max}^{1-\delta+\epsilon \delta-(2m-\delta)(1-b-2\epsilon)} } 
					{N_{1}^{\frac{\delta}{2}} N_{2}^{\frac{\delta}{2}} N_{\min}^{1-b-2\epsilon}	L_{\med}^{b} L_{\min}^{b} L_{\max}^{\epsilon}}\\
					& \lesssim
					\frac{ N_{\max}^{1-\delta+\epsilon \delta-(2m-\delta)(1-b-2\epsilon)} } 
					{ N_{\min}^{\frac{\delta}{2}} N_{\min}^{\frac{1}{2}+\epsilon} N_{\min}^{\frac{1}{2}-b-3\epsilon}	L_{\med}^{b} L_{\min}^{b} L_{\max}^{\epsilon}}.		
			\end{split}}
		\end{equation*}
The idea now is to ensure that we can borrow enough remaining derivative from $N_{\max}$ to contribute to $N_{\min}$. Indeed, since ${\frac{1}{2}-b-3\epsilon<0},$ 
		\begin{equation}\label{bilinear29}
			{
				\begin{split}
						I_{N_1,N_2,N_3}^{L_1,L_2,L_3}
					&\lesssim	
					\frac{ N_{\max}^{1-\delta+\epsilon  \delta-(2m-\delta)(1-b-2\epsilon)}
						N_{\min}^{-\frac{1}{2}+b+3\epsilon} } 
					{ N_{\min}^{\frac{1}{2}+\epsilon}  N_{\min}^{\frac{\delta}{2}} 	L_{\min}^{b} L_{\med}^{b}  L_{\max}^{\epsilon}}\\
					& \leq
					\frac{ N_{\max}^{1-\delta+\epsilon  \delta-(2m-\delta)(1-b-2\epsilon)
							-\frac{1}{2}+b+3\epsilon} } 
					{ N_{\min}^{\frac{1}{2}+\epsilon}  N_{j_{1}}^{\frac{\delta}{2}} 	L_{j_{1}}^{b}  L_{\max}^{\epsilon}},		
			\end{split}}
		\end{equation}
		for some $j_{1}\in \{1,2,3\}.$ Note that, in this case, $j_{1}=j_{0}.$
		Now, to make the exponent of $N_{\max}$ in \eqref{bilinear29} negative, we impose
		\begin{equation}\label{bilinear36} 
			{\begin{split}
					1-\delta-(2m-\delta)(1-b)+b-\frac{1}{2}<0
					& \Longleftrightarrow b<\frac{2m-\frac{1}{2}}{2m-\delta+1}.
			\end{split}}
		\end{equation}
	This last condition on $b$ is compatible with the fact that $b>\frac{1}{2}$ because
		$$ 
		{\begin{split}
				\frac{2m-\frac{1}{2}}{2m-\delta+1}>\frac{1}{2}
				& \Longleftrightarrow 2m>2-\delta.					
		\end{split}}
		$$
	Hence, if we take $\epsilon$ sufficiently small we see that the power of $N_{\max}$ in \eqref{bilinear29} is negative and \eqref{bilinear19} holds.\\
		
	\noindent	{\bf Case 2:} If $N_{\min}=N_{j_{0}},$ $L_{\max}=L_{j_{0}}$ for some $j_{0}\in \{1,2,3\}.$
	
	Here we analyse two subcases according to Lemma \ref{bilin19}.\\

	\noindent	{\bf Subcase 2.1:} If \eqref{bilin22.1} holds.

Since $1-b-2\epsilon>0$,  \eqref{bilinear14}   and \eqref{bilin26} yield
		\begin{equation*}
			{\begin{split}
					I_{N_1,N_2,N_3}^{L_1,L_2,L_3}
					& \leq
					\frac{N_{3}^{1-\delta+\epsilon \delta} N_{3}^{\frac{\delta}{2}} } 
					{N_{1}^{\frac{\delta}{2}}  N_{2}^{\frac{\delta}{2}} 	L_{\med}^{b} L_{\min}^{b} L_{\max}^{1-b-2\epsilon} L_{\max}^{\epsilon} }\\
					& \lesssim 
					\frac{N_{\max}^{1-\delta+\epsilon \delta-(2m)(1-b-2\epsilon)} N_{\max}^{\frac{\delta}{2}} } 
					{ N_{1}^{\frac{\delta}{2}} N_{2}^{\frac{\delta}{2}} N_{\min}^{(1-\delta)(1-b-2\epsilon)}	L_{\med}^{b} L_{\min}^{b} L_{\max}^{\epsilon}}\\
					& \lesssim
					\frac{ N_{\max}^{1-\frac{\delta}{2}+\epsilon \delta-(2m)(1-b-2\epsilon)} } 
					{ N_{\min}^{\frac{\delta}{2}} N_{\max}^{\frac{\delta}{2}} N_{\min}^{(1-\delta)(1-b-2\epsilon)}	L_{\med}^{b} L_{\min}^{b} L_{\max}^{\epsilon}}.		
			\end{split}}
		\end{equation*}
	 Let $j_{1}\in \{1,2,3\}\setminus \{j_{0}\}.$ Since $L_{j_{1}}\neq L_{\max},$ either  $L_{j_{1}}=L_{med}$ or $L_{j_{1}}=  L_{\min}.$ Then we get
		\begin{equation*}
			{\begin{split}
					I_{N_1,N_2,N_3}^{L_1,L_2,L_3}
					&\lesssim
					\frac{ N_{\max}^{1-\frac{\delta}{2}+\epsilon \delta-(2m)(1-b-2\epsilon)} } 
					{  N_{j_{1}}^{\frac{\delta}{2}} L_{j_{1}}^{b} N_{\min}^{\frac{\delta}{2}+(1-\delta)(1-b-2\epsilon)}	  L_{\max}^{\epsilon}}\\
					&\lesssim 	\frac{ N_{\max}^{1-\frac{\delta}{2}+\epsilon \delta-2m(1-b-2\epsilon)-\frac{\delta}{2}-(1-\delta)(1-b-2\epsilon)+\frac{1}{2}+\epsilon} } 
					{  N_{j_{1}}^{\frac{\delta}{2}} L_{j_{1}}^{b} N_{\min}^{\frac{1}{2}+\epsilon}	  L_{\max}^{\epsilon}}.
			\end{split}}
		\end{equation*}
	In order to ensure that the exponent of $N_{\max}$ is negative,  we request 
		\begin{equation*}
		1-\frac{\delta}{2}-2m(1-b)-\frac{\delta}{2}-(1-\delta)(1-b)+\frac{1}{2}<0
		\end{equation*}
		which is equivalent to the condition on $b$ in \eqref{bilinear36}.  Hence, for $\epsilon$ small enough we see that \eqref{bilinear19} also holds in this case.\\

\noindent		{\bf Subcase 2.2:} If \eqref{bilin22.2} holds. 
		
		 First, we consider $j_{0}=3.$ Thus $N_{\min}=N_{3}$ and $L_{\max}=L_{3}.$ Using \eqref{bilinear14},
		\begin{equation*}
			{\begin{split}
					I_{N_1,N_2,N_3}^{L_1,L_2,L_3}
					& \leq
					\frac{N_{3}^{1-\frac{\delta}{2}+\epsilon \delta} } 
					{N_{\max}^{\frac{\delta}{2}}  N_{j_{1}}^{\frac{\delta}{2}} 	L_{j_{1}}^{b}  L_{\max}^{1-b-\epsilon}  }\\
					& =
					\frac{N_{\min}^{\frac{3}{2}-\frac{\delta}{2}+\epsilon \delta+\epsilon} } 
					{N_{\min}^{\frac{1}{2}+\epsilon}N_{\max}^{\frac{\delta}{2}}  N_{j_{1}}^{\frac{\delta}{2}} 	L_{j_{1}}^{b}  L_{\max}^{1-b-2\epsilon} L_{\max}^{\epsilon} },
			\end{split}}
		\end{equation*}
		where $j_{1}\in \{1,2\}.$ Since $1-b-2\epsilon<0$, from \eqref{bilin52}, we obtain
		\begin{equation*}
			{\begin{split}
					I_{N_1,N_2,N_3}^{L_1,L_2,L_3}
					& \lesssim
					\frac{N_{\min}^{\frac{1}{2}-\frac{\delta}{2}+b+\epsilon( \delta+3)} } 
					{N_{\min}^{\frac{1}{2}+\epsilon}   N_{j_{1}}^{\frac{\delta}{2}}
						L_{j_{1}}^{b} 
						N_{\max}^{\frac{\delta}{2} +(2m-\delta)(1-b-2\epsilon)}
						L_{\max}^{\epsilon} }	\\
					& \leq
					\frac{N_{\min}^{\frac{1}{2}-\delta+b -(2m-\delta)(1-b)
							+\epsilon(4m-\delta+3)} } 
					{N_{\min}^{\frac{1}{2}+\epsilon}   N_{j_{1}}^{\frac{\delta}{2}}
						L_{j_{1}}^{b} 
						L_{\max}^{\epsilon} }.	    
			\end{split}}
		\end{equation*}
Note that in view of \eqref{bilinear36}, the exponent of $N_{\min}$ is negative for $\epsilon$ sufficiently small and hence \eqref{bilinear19} holds.

		It remains to consider the case $N_{\min}=\min\{N_{1},N_{2}\}.$ But in this case we must have $L_{\max}=L_{j_{1}},$ with $j_{0}=j_{1}\in \{1,2\}.$
		From   \eqref{bilinear9}, \eqref{bilin52}, and the fact that $N_{\max}\sim\max\{N_1,N_2\}$, we obtain
		\begin{equation*}
			{
				\begin{split}
					I_{N_1,N_2,N_3}^{L_1,L_2,L_3}
					& \lesssim
					\frac{N_{3}^{1-\frac{\delta}{2}+\epsilon \delta } } 
					{N_{1}^{\frac{\delta}{2}}  N_{2}^{\frac{\delta}{2}} 	L_{\max}^{b} L_{\min}^{b} L_{med}^{1-b-\epsilon} }\\
					& \lesssim
					\frac{N_{\max}^{1-\delta+\epsilon \delta } } 
					{N_{\min}^{\frac{\delta}{2}}  	L_{\max}^{b} L_{\min}^{b} L_{med}^{1-b-\epsilon} }\\
					& \lesssim
					\frac{ N_{\max}^{1-\delta+\epsilon \delta-(2m-\delta)(1-b-\epsilon)}} 
					{  N_{\min}^{\frac{\delta}{2}+1-b-\epsilon} L_{\max}^{b} 	L_{\min}^{b}   
					}.		
			\end{split}}
		\end{equation*}
Since
$$
\frac{\delta}{2}+1-b-\epsilon=\frac{\delta}{2}+\frac{1}{2}+\epsilon+\left(\frac{1}{2}-b-2\epsilon\right)
$$	
and $\frac{1}{2}-b-2\epsilon<0$
	we may write
		\begin{equation*}
			{
				\begin{split}
					I_{N_1,N_2,N_3}^{L_1,L_2,L_3}
					& \lesssim
					\frac{ N_{\max}^{1-\delta+\epsilon \delta-(2m-\delta)(1-b+\epsilon)}N_{\min}^{-\frac{1}{2}+b+2\epsilon}} 
					{  N_{\min}^{\frac{\delta}{2}+\frac{1}{2}+\epsilon}  L_{\max}^{b} 	L_{\min}^{b}   
					}	\\
					& \lesssim
					\frac{ N_{\max}^{1-\delta+\epsilon \delta-(2m-\delta)(1-b+\epsilon)-\frac{1}{2}+b+2\epsilon}} 
					{  N_{\min}^{\frac{\delta}{2}+\frac{1}{2}+\epsilon} L_{\max}^{b} 	L_{\min}^{b}   
					}.
			\end{split}}
		\end{equation*}
	Note that the exponent of $N_{\max}$ is negative for $\epsilon$ small provided \eqref{bilinear36} holds. Therefore,
		\begin{equation*}
			{
				\begin{split}
						I_{N_1,N_2,N_3}^{L_1,L_2,L_3}
					& \lesssim
					\frac{1} 
					{  N_{\min}^{\frac{\delta}{2}} N_{\min}^{\frac{1}{2}+\epsilon} L_{\max}^{b-\epsilon} L_{\max}^{\epsilon} 	L_{\min}^{b}   	}
					=\frac{1} 
					{  N_{\min}^{\frac{1}{2}+\epsilon} N_{j_1}^{\frac{\delta}{2}} L_{j_1}^{b-\epsilon} L_{\max}^{\epsilon}  	},
			\end{split}}
		\end{equation*}
	which is \eqref{bilinear19} with $b'=b-\epsilon$.
This concludes the proof of the lemma in the case $N_{\max}\geq a.$

Finally, we analyse the case $N_{\max}\leq a.$ In this case, all frequencies are bounded and we do not need a careful analysis of the the power of $N_{\max}$ as before. Indeed, from \eqref{bilinear14.1} and the fact that $1-b-2\epsilon>0$, we obtain
		\begin{equation*}
			{
				\begin{split}
					\frac{N_{3}^{1-\delta \left(b-\frac{1}{2}\right)-1+b+\epsilon} } 
					{N_{1}^{\frac{\delta}{2}}  N_{2}^{\frac{\delta}{2}} 	L_{med}^{b} L_{\min}^{b} L_{\max}^{1-b-\epsilon}}
					&\leq
					\frac{N_{\max}^{-\delta \left(b-\frac{1}{2}\right)+b+\epsilon} } 
					{N_{j_{1}}^{\frac{\delta}{2}}   	L_{j_{1}}^{b}  L_{\max}^{1-b-2\epsilon} L_{\max}^{\epsilon}}\\
					& \leq
					\frac{N_{\max}^{-\delta \left(b-\frac{1}{2}\right)+b+\epsilon} N_{\min}^{\frac{1}{2}+\epsilon} } 
					{N_{\min}^{\frac{1}{2}+\epsilon} N_{j_{1}}^{\frac{\delta}{2}}   	L_{j_{1}}^{b}   L_{\max}^{\epsilon}}\\
					&\lesssim_{\delta,b,\epsilon}
					\frac{1 } 
					{N_{\min}^{\frac{1}{2}+\epsilon} N_{j_{1}}^{\frac{\delta}{2}}   	L_{j_{1}}^{b}   L_{\max}^{\epsilon}},	
			\end{split}}
		\end{equation*}
		where $j_{1}\in \{1,2\}.$
		This proves the lemma.
	\end{proof}
	
By invoking Lemma \ref{bilinear27}, we get that for some $\epsilon>0$ small, some $j_{1}\in \{1,2,3\}$ and some $b'\in \mathbb{R}$ with $\displaystyle{\frac{1}{2}<b'\leq b},$
	\begin{equation}\label{bilinear20}
		{\begin{split}
				I&\lesssim
				\sup_{\substack{w\in S_\epsilon^b} } \left(  \sum_{\substack{N_1,N_2,N_3\\
						L_1,L_2,L_3}} 
				\int\limits_{\Gamma }
				\frac{ \widehat{ P_{N_{2}}Q_{L_{2}} f}(k_{2},\tau_{2}) \widehat{P_{N_{1}}Q_{L_{1}}  g}(k_{1},\tau_{1})
					\widehat{ P_{N_{3}}Q_{L_{3}} h}(k_{3},\tau_{3})}
				{
					N_{\min}^{\frac{1}{2}+\epsilon} 
					N_{j_{1}}^{\frac{\delta}{2}} 
					L_{j_{1}}^{b'}	
					L_{\max}^{\epsilon}
				} dS \right).
		\end{split}}
	\end{equation}

In order to estimate the right-hand side of \eqref{bilinear20}, we claim that 
		\begin{equation}\label{bilinear24}
			{\begin{split}
					II&:= 
					\int\limits_{\Gamma }
					\frac{ \widehat{ P_{N_{2}}Q_{L_{2}} f}(k_{2},\tau_{2}) \widehat{P_{N_{1}}Q_{L_{1}}  g}(k_{1},\tau_{1})
						\widehat{ P_{N_{3}}Q_{L_{3}} h}(k_{3},\tau_{3})}
					{
						N_{\min}^{\frac{1}{2}+\epsilon} 
						N_{j_{1}}^{\frac{\delta}{2}} 
						L_{j_{1}}^{b'}	} dS\\
					&\lesssim \left\| \widehat{P_{N_{1}}Q_{L_{1}}  g}(k_{1},\tau_{1}) \right\|_{l^{2}_{k_{1}}L^{2}_{\tau_{1}}}		
					\left\|\widehat{ P_{N_{2}}Q_{L_{2}} f}(k_{2},\tau_{2})\right \|_{l^{2}_{k_{2}}L^{2}_{\tau_{2}}}  
					\left\|\widehat{ P_{N_{3}}Q_{L_{3}} h}(k_{3},\tau_{3})\right\|_{l^{2}_{k_{3}}L^{2}_{\tau_{3}}}.
			\end{split}}
		\end{equation}
Indeed, if $N_{\min}=N_{j_{0}}$ with $j_{0}\in \{1,2,3\}$ and $j_{0}\neq j_{1},$ then the proof is similar to that of CLAIM 7 in \cite{Flores OH and Smith}. Here, we show the case $j_{0}=j_{1}.$ Without loss of generality we can assume $j_{0}=j_{1}=1.$ By setting $\widehat{g^{\ast}}(k_{1},\tau_{1})= | k_{1}|^{-\frac{1}{2}-\epsilon} |k_{1}|^{-\frac{\delta}{2}} L_{1}^{-b'} \widehat{P_{N_{1}}Q_{L_{1}}  g}(k_{1},\tau_{1})$ and recalling that $k_1+k_2+k_3=0$ and $\tau_1+\tau_2+\tau_3=0$ we have 
		\begin{equation*}
				\begin{split}
					II&\lesssim  
					\int\limits_{\Gamma }
					\widehat{g^{\ast}}(k_{1},\tau_{1})   \widehat{ P_{N_{2}}Q_{L_{2}} f}(k_{2},\tau_{2})  \widehat{ P_{N_{3}}Q_{L_{3}} h}(k_{3},\tau_{3})  
					dS\\
					&=\sum_{k_{3},k_{2}\in \mathbb{Z}} \int\limits_{\mathbb{R}} \int\limits_{\mathbb{R}} 
					\widehat{g^{\ast}}(-k_{3}-k_{2},-\tau_{3}-\tau_{2})   \widehat{ P_{N_{2}}Q_{L_{2}} f}(k_{2},\tau_{2})  \widehat{ P_{N_{3}}Q_{L_{3}} h}(k_{3},\tau_{3})  
					d\tau_{2} d\tau_{3}\\
					&\leq \int\limits_{\mathbb{R}} \sum_{k_{3}\in \mathbb{Z}}
					\left|\left(\widehat{g^{\ast}} \ast  \widehat{ P_{N_{2}}Q_{L_{2}} f} \right)(-k_{3},-\tau_3)\right|   |\widehat{ P_{N_{3}}Q_{L_{3}} h}(k_{3},\tau_{3})|  
					d\tau_{3},
				\end{split}
			\end{equation*}
		where the convolution is taken over time and spatial variables.
	Then, using Cauchy-Schwarz and Young's inequalities
		\begin{equation}\label{bilinear57}
			{\begin{split}
					II
					&\lesssim \left\|
					 \widehat{g^{\ast}} \ast  \widehat{ P_{N_{2}}Q_{L_{2}} f}\right\|_{l^2_{k_3}L^{2}_{\tau_{3}}} \left\|\widehat{ P_{N_{3}}Q_{L_{3}} h}(k_{3},\tau_{3})\right\|_{l^{2}_{k_{3}}L^{2}_{\tau_{3}}}  \\
					&\leq \left\|
					\widehat{g^{\ast}}(k_{1},\tau_{1}) \right\|_{l^{1}_{k_{1}}L^{1}_{\tau_{1}}}  \left\|\widehat{ P_{N_{2}}Q_{L_{2}} f}(k_{2},\tau_{2})\right \|_{l^{2}_{k_{2}}L^{2}_{\tau_{2}}}  
					\left\|\widehat{ P_{N_{3}}Q_{L_{3}} h}(k_{3},\tau_{3})\right\|_{l^{2}_{k_{3}}L^{2}_{\tau_{3}}}.
			\end{split}}
		\end{equation}
Note that 	using Holder's inequality, 
\begin{equation}\label{bilinear58}
{
\begin{split}
\left\|
\widehat{g^{\ast}} \right\|_{l^{1}_{k_{1}}L^{1}_{\tau_{1}}} 
	&\leq 
\left\| \langle k_{1} \rangle^{-\frac{1}{2}-\epsilon}\right\|_{l^2_{k_1}} \sup_{k_1}\left\| \langle k_{1}\rangle^{-\frac{\delta}{2}} \left\langle \frac{\tau_{1}-\lambda_{k_{1}}}{\langle k_{1} \rangle^{\delta}} \right\rangle^{-b'} \right\|_{L^{2}_{\tau_{1}}}
\left\| \widehat{P_{N_{1}}Q_{L_{1}}  g}(k_{1},\tau_{1}) \right\|_{l^{2}_{k_{1}}L^{2}_{\tau_{1}}}\\
	&\lesssim_{\epsilon, \delta, b'}
\left\| \widehat{P_{N_{1}}Q_{L_{1}}  g}(k_{1},\tau_{1}) \right\|_{l^{2}_{k_{1}}L^{2}_{\tau_{1}}},	
\end{split}}
\end{equation}
where we have used that the first term on the right-hand side of \eqref{bilinear58} is clearly finite and for the second one we observe that a change of variables gives 
$$
\left\| \langle k_{1}\rangle^{-\frac{\delta}{2}} \left\langle \frac{\tau_{1}-\lambda_{k_{1}}}{\langle k_{1} \rangle^{\delta}} \right\rangle^{-b'} \right\|_{L^{2}_{\tau_{1}}}^2
= \int_{\mathbb{R}}\langle k_{1}\rangle^{-\delta} \left\langle \frac{\tau_{1}-\lambda_{k_{1}}}{\langle k_{1} \rangle^{\delta}} \right\rangle^{-2b'}d\tau_1=\int \langle\tau\rangle^{-2b'}d\tau,
$$
	which is finite because $b'>\frac{1}{2}$. Therefore, the claim follows from \eqref{bilinear57} and \eqref{bilinear58}.

With the above claim in hand, we are finally able to complete the proof of Theorem \ref{bilinear}. In fact, from \eqref{bilinear24} we deduce
	\begin{equation*}
		{\begin{split}
				I&\lesssim 
				\sup_{\substack{w\in S_\epsilon^b}} \left(  \sum_{\substack{N_1,N_2,N_3\\
						L_1,L_2,L_3}} 
				L_{\max}^{-\epsilon} \left\| \widehat{ g}(k_{1},\tau_{1}) \right\|_{l^{2}_{k_{1}}L^{2}_{\tau_{1}}}		
				\left\|\widehat{  f}(k_{2},\tau_{2})\right \|_{l^{2}_{k_{2}}L^{2}_{\tau_{2}}}  
				\left\|\widehat{  h}(k_{3},\tau_{3})\right\|_{l^{2}_{k_{3}}L^{2}_{\tau_{3}}}  \right)\\
				&\leq	\sup_{\substack{w\in S_\epsilon^b}} \left(  \sum_{\substack{N_1,N_2,N_3\\
						L_1,L_2,L_3}} 
				L_{\max}^{-\epsilon} \left\|  v \right\|_{Z^{b}}		
				\left\| u\right \|_{Z^{b}}  
				\left\| w\right\|_{Z^{1-b-\epsilon}}  \right)\\
				&\leq
				\left\|  v \right\|_{Z^{b}}		
				\left\| u\right \|_{Z^{b}}  
				\sum_{\substack{N_1,N_2,N_3\\
						L_1,L_2,L_3}} 
				\left(L_{\max}^{-\frac{\epsilon}{6}}\right)^{6}
		\end{split}}
	\end{equation*}
	Since we are summing over diadic indices and
	Lemma \ref{bilin19} (recall that $2m>\delta$) imply  that $L_{\max}$ dominates all other dyadic index (provided $N_{\max}$ is large), the factor $L_{\max}^{-\frac{\epsilon}{6}}$ makes each summation convergent. The proof of Theorem \ref{bilinear} is thus completed.
\end{proof}

\section{Global well-posedness}\label{GWP1}
In this section we will show our global well-posedness result stated in Theorem \ref{GWP}. As we have already observed in the introduction, it is a direct consequence of the following result.

\begin{thm}\label{Gwp}
	Let $\alpha>0,$ $\beta>0,$ $r>0,$ $\mu \in \mathbb{R},$ $m>\frac{1}{2},$ with $r<m$  and $T>0$ be given. Then for any $v_{0}\in L^{2}_{0}(\mathbb{T})$ and any $\delta\leq 1$ with $\max\{0, 2-2m\}<\delta$   the IVP \eqref{GB6} admits a unique solution $v\in C([0,T];L_{0}^{2}(\mathbb{T})).$ Moreover, the solution map $v_{0}\in L^{2}_{0}(\mathbb{T})\longmapsto v(t)\in C([0,T];L_{0}^{2}(\mathbb{T}))$ is uniformly continuous within a bounded set of $L^{2}_{0}(\mathbb{T}).$
\end{thm}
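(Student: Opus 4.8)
The plan is to run a contraction-mapping argument in the dissipation-normalized Bourgain space $Z^{b}_{T}$ and then to globalize by means of an a priori $L^{2}$ bound extracted from the dissipative structure of the damping. Writing \eqref{GB6} in its Duhamel form
\[
v(t)=S_{\mu}(t)v_{0}-\int_{0}^{t}S_{\mu}(t-s)\big(\partial_{x}(v^{2})+N_{1}v+Rv\big)(s)\,ds,
\]
I would first fix $b$ in the common admissible range dictated by \eqref{minb}, Theorem \ref{stiN1}, and Proposition \ref{stiR1}; this range is nonempty precisely because the hypothesis $\max\{0,2-2m\}<\delta$ forces $2m>2-\delta$. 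For $0<T<1$, I would then study the map
\[
\Phi(v)(t):=\eta_{T}(t)S_{\mu}(t)v_{0}-\eta_{T}(t)\int_{0}^{t}S_{\mu}(t-s)\big(\partial_{x}(v^{2})+N_{1}v+Rv\big)(s)\,ds
\]
on a closed ball of $Z^{b}_{T}$, the time localization through $\eta_{T}$ being what produces the small powers of $T$.

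Second, I would estimate each contribution separately. Proposition \ref{locdamp6} controls the free evolution by $\|S_{\mu}(t)v_{0}\|_{Z^{b}}\lesssim\|v_{0}\|_{L^{2}_{0}(\mathbb{T})}$; the smoothing estimate of Theorem \ref{locdamp8} reduces the Duhamel term to the $Z^{b-1}$ norm of $D^{-\delta(b-\frac{1}{2})}$ applied to the forcing; and then Theorem \ref{bilinear}, Theorem \ref{stiN1}, and Proposition \ref{stiR1} bound the pieces $\partial_{x}(v^{2})$, $N_{1}v$, and $Rv$, each by a positive power of $T$ times $Z^{b}_{T}$ norms. Combining these yields a bound of the schematic form
\[
\|\Phi(v)\|_{Z^{b}_{T}}\le C\|v_{0}\|_{L^{2}_{0}(\mathbb{T})}+CT^{\epsilon}\big(\|v\|_{Z^{b}_{T}}^{2}+\|v\|_{Z^{b}_{T}}\big),
\]
so that for $T$ small, depending only on $\|v_{0}\|_{L^{2}_{0}(\mathbb{T})}$, $\Phi$ maps a suitable ball into itself. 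Applying the same trilinear structure to $\partial_{x}(v^{2})-\partial_{x}(w^{2})=\partial_{x}((v+w)(v-w))$ gives the difference estimate showing $\Phi$ is a contraction; hence there is a unique fixed point, and Proposition \ref{prop2}(iii) places it in $C([0,T];L^{2}_{0}(\mathbb{T}))$. This establishes local well-posedness on an interval whose length depends only on $\|v_{0}\|_{L^{2}_{0}(\mathbb{T})}$, together with Lipschitz dependence on the data over that interval, which in particular furnishes the uniform continuity of the solution map on bounded sets.

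Third, to pass from local to global I would exploit the dissipation. Testing the equation against $v$ and using that the dispersive operators $\beta D^{2m}\partial_{x}$, $\alpha\mathcal{H}^{2r}\partial_{x}$, $2\mu\partial_{x}$ are skew-adjoint (see \eqref{skew}) and that $\int_{\mathbb{T}}v\,\partial_{x}(v^{2})\,dx=0$, the only surviving term is the damping, which satisfies $\langle GD^{\delta}Gv,v\rangle=\|D^{\delta/2}Gv\|_{L^{2}_{p}(\mathbb{T})}^{2}\ge0$. Thus
\[
\tfrac{1}{2}\tfrac{d}{dt}\|v(t)\|_{L^{2}_{p}(\mathbb{T})}^{2}=-\|D^{\delta/2}Gv(t)\|_{L^{2}_{p}(\mathbb{T})}^{2}\le0,
\]
so $\|v(t)\|_{L^{2}_{p}(\mathbb{T})}\le\|v_{0}\|_{L^{2}_{p}(\mathbb{T})}$ for all $t$, which I would justify rigorously on the smooth approximations used to construct the solution. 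Since the local existence time depends only on the (non-increasing) $L^{2}_{0}$ norm, the solution extends by reapplying the local result on consecutive intervals of equal length, covering all of $[0,T]$. I expect the genuine difficulty of the theorem to be concentrated in the bilinear estimate Theorem \ref{bilinear}, which is already in hand; the assembly above is then essentially bookkeeping, the only point demanding care being the passage to global solutions, where one must ensure the a priori bound is inherited by the low-regularity solutions and that the local lifespan truly depends on nothing but the conserved-in-magnitude $L^{2}$ norm.
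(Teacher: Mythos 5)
Your proposal is correct and follows essentially the same route as the paper: a contraction argument for the Duhamel map in $Z^{b^{\ast}}_{T_{0}}$ built from Proposition \ref{locdamp6}, Theorem \ref{locdamp8}, and the three nonlinear estimates (Theorems \ref{bilinear}, \ref{stiN1} and Proposition \ref{stiR1}), followed by globalization via the energy identity \eqref{energystimate}, which makes $\|v(\cdot,t)\|_{L^{2}_{0}(\mathbb{T})}$ non-increasing and hence allows iteration on intervals of uniform length. The only cosmetic differences are that the paper centers the ball at the free evolution $S_{\mu}(\cdot)v_{0}$ rather than at the origin and keeps the cutoff $\eta_{T}$ inside the proofs of the nonlinear estimates rather than in the definition of the map.
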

\begin{proof}
	First we will show the local well posedness of IVP \eqref{GB6} in $L_{0}^{2}(\mathbb{T})$, which in turn is equivalent to showing the local well-posedness of \eqref{GB7}. The strategy is  to prove that the operator 
	\begin{equation*}
		\Gamma(v):=S_{\mu}(t)v_{0}-\int_{0}^{t}S_{\mu}(t-s) \left( \partial_{x}(v^{2})+N_{1}v+Rv\right)(s)ds
	\end{equation*}
is a contraction in some ball
	$$
	B_{M}(S_{\mu}(\cdot)v_{0}):=\left\{ v\in Z_{T_{0}}^{b^{\ast}}: \;\|v-S_{\mu}(t)v_{0}\|_{Z_{T_{0}}^{b^{\ast}}}\leq M\right\},
	$$
where $0<T_0<1$ and $M>0$ are suitable constants to be chosen later and
$$
\frac{1}{2}<b^*<\min\left\{b,\frac{2m}{2m+\delta}\right\},
$$
with $b$ satisfying \eqref{minb}.
 By starting with $\displaystyle{M<\frac{1}{2}\|S_{\mu}(t)v_{0}\|_{Z_{T_{0}}^{b^{\ast}}} }$ it is easily seen that  ${\|v\|_{Z_{T_{0}}^{b^{\ast}}}\sim \|S_{\mu}(t)v_{0}\|_{Z_{T_{0}}^{b^{\ast}}}}$ for any $v\in B_{M}(S_{\mu}(\cdot)v_{0})$. In particular, Proposition \ref{locdamp6} implies that for any $v\in B_{M}(S_{\mu}(\cdot)v_{0})$ there exists a constant $C_{1}\equiv C_{1}(\delta,b,g)>0$ satisfying
	\begin{equation}\label{Lwposed1}
		\|v\|_{Z_{T_{0}}^{b^{\ast}}}\leq 2 \|S_{\mu}(t)v_{0}\|_{Z_{T_{0}}^{b^{\ast}}}\leq 2C_{1}\|v_{0}\|_{L^{2}_{0}(\mathbb{T})}.
	\end{equation}
From Theorems \ref{locdamp8},  \ref{bilinear}, and \ref{stiN1}, and Proposition \ref{stiR1}, we may found constants $\epsilon>0$ small and $C>0$ such that
	\begin{equation}\label{Lwposed2}
		{\begin{split}
				\left\|\Gamma(v)-S_{\mu}(t)v_{0} \right\|_{Z_{T_{0}}^{b^{\ast}}}
				&\lesssim
			\left\|D^{-\delta\left(b^{\ast} -\frac{1}{2}\right)}\partial_{x}(v^{2}) \right\|_{Z_{T_{0}}^{b^{\ast}-1}}
				+ \left\|D^{-\delta\left(b^{\ast} -\frac{1}{2}\right)}N_{1} \right\|_{Z_{T_{0}}^{b^{\ast}-1}}
				+
				\left\|D^{-\delta\left(b^{\ast} -\frac{1}{2}\right)}R(v) \right\|_{Z_{T_{0}}^{b^{\ast}-1}}\\
				&\leq
			CT_{0}^{\epsilon}\left\|v \right\|^{2}_{Z_{T_{0}}^{b^{\ast}}}
				+
			CT_{0}^{\epsilon}\left\|v \right\|_{Z_{T_{0}}^{b^{\ast}}}
				+
			CT_{0}^{\epsilon}\left\|v \right\|_{Z_{T_{0}}^{b^{\ast}}}\\
			&\leq 2CT_{0}^{\epsilon} \left\|v \right\|_{Z_{T_{0}}^{b^{\ast}}} 
			\left(\left\|v \right\|_{Z_{T_{0}}^{b^{\ast}}}
			+ 1\right).
		\end{split}}
	\end{equation}
	Hence, if $T_0$ is small enough such that
	\begin{equation}\label{Lwposed4}
		{
			\begin{split}
				2CT_{0}^{\epsilon}C_{1} \left\|v_{0} \right\|_{L_{0}^{2}(\mathbb{T})} 
				\left(4C_{1}\left\|v_{0} \right\|_{L_{0}^{2}(\mathbb{T})}
				+ 2\right)\leq M,
		\end{split}}
	\end{equation}
we deduce, from \eqref{Lwposed1}, 
$$
	\left\|\Gamma(v)-S_{\mu}(t)v_{0} \right\|_{Z_{T_{0}}^{b^{\ast}}}\leq 	2CT_{0}^{\epsilon}C_{1} \left\|v_{0} \right\|_{L_{0}^{2}(\mathbb{T})} 
	\left(4C_{1}\left\|v_{0} \right\|_{L_{0}^{2}(\mathbb{T})}
	+ 2\right)\leq M.
$$

	On the other hand,  for any $v_{1},v_{2}\in B_{M}(S_{\mu}(\cdot)v_{0})$, from computations similar to \eqref{Lwposed2}, we get
	\begin{equation*}
		{
			\begin{split}
				\left\|\Gamma(v_{1})-\Gamma(v_{2}) \right\|_{Z_{T_{0}}^{b^{\ast}}}
				&\leq
				CT_{0}^{\epsilon}\left\|v_{1}-v_{2} \right\|_{Z_{T_{0}}^{b^{\ast}}}
				\left\|v_{1}+v_{2} \right\|_{Z_{T_{0}}^{b^{\ast}}}
				+
				2CT_{0}^{\epsilon}\left\|v_{1}-v_{2} \right\|_{Z_{T_{0}}^{b^{\ast}}}\\
				&\leq
				2CT_{0}^{\epsilon}
				\left(\left\|v_{1} \right\|_{Z_{T_{0}}^{b^{\ast}}}
				+	\left\|v_{2} \right\|_{Z_{T_{0}}^{b^{\ast}}}
				+ 1\right)
				\left\|v_{1}-v_{2} \right\|_{Z_{T_{0}}^{b^{\ast}}} 
				.
		\end{split}}
	\end{equation*}
	From \eqref{Lwposed1}, \eqref{Lwposed4}  and the fact that $\displaystyle{M<\frac{1}{2}\|S_{\mu}(t)v_{0}\|_{Z_{T_{0}}^{b^{\ast}}} }$ we obtain $	2CT_{0}^{\epsilon}
	\left(4C_{1}\left\|v_{0} \right\|_{L_{0}^{2}(\mathbb{T})}
	+ 1\right)\leq \frac{1}{2}$. Thus,
	\begin{equation*}
		{
			\begin{split}
				\left\|\Gamma(v_{1})-\Gamma(v_{2}) \right\|_{Z_{T_{0}}^{b^{\ast}}}
				&\leq
				2CT_{0}^{\epsilon}
				\left(4C_{1}\left\|v_{0} \right\|_{L_{0}^{2}(\mathbb{T})}
				+ 1\right)
				\left\|v_{1}-v_{2} \right\|_{Z_{T_{0}}^{b^{\ast}}}\\
				&\leq
				\frac{1}{2}
				\left\|v_{1}-v_{2} \right\|_{Z_{T_{0}}^{b^{\ast}}}.	
		\end{split}}
	\end{equation*}
	This shows that $\Gamma: B_{M}(S_{\mu}(\cdot)v_{0}) \to B_{M}(S_{\mu}(\cdot)v_{0})$ is a contraction map for $T_{0}\equiv T_{0}(\|v_{0}\|_{L_{0}^{2}(\mathbb{T})})$ satisfying \eqref{Lwposed4}. From Proposition \ref{prop2}, we infer that its unique fixed point $v$ belongs to $C([0,T];L_{0}^{2}(\mathbb{T})).$ This proves the local-well posedness of IVP \eqref{GB6}. 
	
	To show that this local solution can be extended to any time interval of the form $[0,T]$ we note that solutions of \eqref{GB6} satisfies 
	\begin{equation}\label{energystimate}
		\frac{1}{2}\frac{d}{dt}\left(\|v(\cdot,t)\|^2_{L_{0}^{2}(\mathbb{T})}\right)=-\|D^{\frac{\delta}{2}}Gv(\cdot,t)\|^2_{L_{0}^{2}(\mathbb{T})}
	\end{equation}
implying that $\displaystyle{\|v(\cdot,t)\|_{L_{0}^{2}(\mathbb{T})}}$ is decreasing in the temporal variable $t\geq0.$ Therefore,
	$\|v(\cdot,t)\|_{L_{0}^{2}(\mathbb{T})}\leq \|v_{0}\|_{L_{0}^{2}(\mathbb{T})}$
	and we can repeat the above argument with a uniform-size local time interval.

The uniform continuity of the map data-to-solution follows in a standard way. The proof of the theorem is thus complete.

\end{proof}

\section{exponential stabilization}\label{LES}

This section is devoted to prove our exponential stabilization result. As before, throughout the section we assume $\mu\in \mathbb{R},$ $\alpha>0,$ $\beta>0,$ $0<\delta\leq1,$ $m>\frac{1}{2},$ and $0<r<m.$

\subsection{Linear Stabilization}\label{linstab}
In this section, we state a linear stabilization result which is fundamental to prove the local exponential stabilization result presented in Theorem \ref{Sta1}. We consider the equation
\begin{equation}\label{Linestab1}
	\begin{cases}
		\partial_{t}v+\beta D^{2m}\partial_{x}v+\alpha \mathcal{H}^{2r}\partial_{x}v+2\mu  \partial_{x}v=-GD^{\delta}Gv, \quad  t>0,\\
		v(x,0)=v_{0}(x), 
	\end{cases}
\end{equation}
where $v_{0}\in L_{0}^{2}(\mathbb{T}).$

As in Section \ref{Linear Systems}, let $\mathcal{A}$ denote the operator  $-\beta D^{2m}- \alpha H^{2r}-2\mu$. Since $G$ defined in \eqref{EQ1} is self-adjoint on $L_{0}^{2}(\mathbb{T}),$ from \eqref{skew} it is easy to see that, for any $v\in H_{0}^{2m+1}(\mathbb{T})$,
$$
\left((\partial_{x}\mathcal{A} -GD^{\delta}G)v,v\right)_{L_{0}^{2}(\mathbb{T})}= 
\left((\partial_{x}\mathcal{A} -GD^{\delta}G)^{\ast}v,v\right)_{L_{0}^{2}(\mathbb{T})}
=-\|D^{\frac{\delta}{2}}Gv \|^{2}_{L_{0}^{2}(\mathbb{T})}.
$$
This means that both $\partial_{x}\mathcal{A}-GD^{\delta}G$ and $(\partial_{x}\mathcal{A} -GD^{\delta}G)^*$ are dissipative in $L_{0}^{2}(\mathbb{T})$ (see \cite[Definition 4.1, page 13]{Pazy}). Hence, we conclude that $\partial_{x}\mathcal{A} -GD^{\delta}G$ is the infinitesimal generator of a $C_{0}$-semigroup of contractions on $L_{0}^{2}(\mathbb{T})$ (see \cite[Corollary 4.4, page 14]{Pazy}). We denote this semigroup by $\{	W(t)\}_{t\geq0}$. Actually, we will show that $W(t)$ has an exponential decay.
The following estimate will be needed.
\begin{lem}\label{staop2}
	Let $b^{\ast}\in \mathbb{R}$ be given with $\displaystyle{\frac{1}{2}<b^{\ast}<\min\left\{\frac{2m}{2m+\delta},b\right\}},$ where $b$ satisfies \eqref{minb}. Then, for any $v_{0}\in L_{0}^{2}(\mathbb{T})$ and any $T>0$
	$$\|W(t)v_{0}\|_{Z_{T}^{b^{\ast}}}\lesssim_{b^{\ast},T}\|v_{0}\|_{L_{0}^{2}(\mathbb{T})}.$$
\end{lem}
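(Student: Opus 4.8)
The plan is to treat \eqref{Linestab1} as a perturbation of the dissipative linear flow $S_{\mu}(t)$ from \eqref{GB9}. Using the decomposition \eqref{locdamp}, namely $GD^{\delta}Gv=\widetilde{D}^{\delta}v+N_{1}v+Rv$, the operator $\widetilde{D}^{\delta}$ is already built into $S_{\mu}(t)$, so that $w(t):=W(t)v_{0}$ solves the integral equation
\begin{equation*}
	w(t)=S_{\mu}(t)v_{0}-\int_{0}^{t}S_{\mu}(t-s)\bigl(N_{1}+R\bigr)w(s)\,ds.
\end{equation*}
I would first establish the estimate on a short interval $[0,T_{0}]$ with $T_{0}<1$ to be fixed, and then propagate it to arbitrary $T$ by iteration. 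Throughout, the chosen range $\frac{1}{2}<b^{\ast}<\min\{b,\frac{2m}{2m+\delta}\}$ guarantees that all the estimates of Section \ref{Bourgspace} apply with exponent $b^{\ast}$: indeed $b^{\ast}\in(\frac{1}{2},\frac{3}{2})$ as needed for Proposition \ref{locdamp6} and Theorem \ref{locdamp8}, $b^{\ast}\in(\frac{1}{2},1)$ as needed for Proposition \ref{stiR1}, and $b^{\ast}<\frac{2m}{2m+\delta}$ as needed for Theorem \ref{stiN1}.

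For the short-time bound I apply Proposition \ref{locdamp6} to the free term, obtaining $\|S_{\mu}(t)v_{0}\|_{Z^{b^{\ast}}}\lesssim\|v_{0}\|_{L^{2}_{0}(\mathbb{T})}$, and then Theorem \ref{locdamp8} together with Theorem \ref{stiN1} and Proposition \ref{stiR1} to the Duhamel term, exactly as in the contraction argument of Theorem \ref{Gwp} but with the bilinear contribution $\partial_{x}(v^{2})$ absent. This yields, for some small $\epsilon>0$,
\begin{equation*}
	\|w\|_{Z_{T_{0}}^{b^{\ast}}}\leq C\|v_{0}\|_{L^{2}_{0}(\mathbb{T})}+C\bigl(T_{0}^{\epsilon}+T_{0}^{\frac{3}{2}-b^{\ast}-\epsilon}\bigr)\|w\|_{Z_{T_{0}}^{b^{\ast}}}.
\end{equation*}
Since the equation is linear, the constant $C$ does not depend on $v_{0}$, so choosing $T_{0}=T_{0}(b^{\ast},\delta,g)$ small enough that $C\bigl(T_{0}^{\epsilon}+T_{0}^{\frac{3}{2}-b^{\ast}-\epsilon}\bigr)\leq\frac{1}{2}$ lets me absorb the last term and conclude $\|W(\cdot)v_{0}\|_{Z_{T_{0}}^{b^{\ast}}}\lesssim\|v_{0}\|_{L^{2}_{0}(\mathbb{T})}$. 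To legitimize the absorption (which presupposes $\|w\|_{Z_{T_{0}}^{b^{\ast}}}<\infty$) I would run the argument first for smooth data $v_{0}\in\mathcal{D}(\widetilde{\mathcal{A}})$, where $w$ is a classical and hence finite-norm solution, and then extend to general $v_{0}\in L^{2}_{0}(\mathbb{T})$ by density using the uniform bound.

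To reach an arbitrary $T>0$, I cover $[0,T]$ by $N=\lceil T/T_{0}\rceil$ consecutive intervals $I_{j}=[jT_{0},(j+1)T_{0}]$. On each one the semigroup identity $W(t)v_{0}=W(t-jT_{0})\bigl(W(jT_{0})v_{0}\bigr)$, combined with the short-time estimate and the contraction property of $\{W(t)\}_{t\geq0}$ on $L^{2}_{0}(\mathbb{T})$ established just before the statement, gives
\begin{equation*}
	\|W(\cdot)v_{0}\|_{Z^{b^{\ast}}(I_{j})}\lesssim\|W(jT_{0})v_{0}\|_{L^{2}_{0}(\mathbb{T})}\leq\|v_{0}\|_{L^{2}_{0}(\mathbb{T})},
\end{equation*}
uniformly in $j$. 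Summing the finitely many contributions yields the asserted bound with a constant depending on $N$, hence on $T$ (and on $b^{\ast}$). The main technical care is precisely this final step: because $Z^{b^{\ast}}$ is a Fourier-restriction space, the norm $\|\cdot\|_{Z_{T}^{b^{\ast}}}$ is not simply the sum of the norms over the $I_{j}$, so one must glue the restriction norms by means of the time-localization estimates of Proposition \ref{locdamp7} together with the finiteness of $N$; this gluing is where I expect the bulk of the work to lie, the preceding perturbative estimates being essentially those already assembled for Theorem \ref{Gwp}.
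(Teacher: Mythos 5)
Your proposal is correct and follows essentially the same route as the paper: writing $W(t)v_{0}$ via Duhamel's formula against $S_{\mu}(t)$ using the decomposition \eqref{locdamp}, estimating the free and Duhamel terms with Proposition \ref{locdamp6}, Theorem \ref{locdamp8}, Theorem \ref{stiN1} and Proposition \ref{stiR1}, absorbing for $T_{0}$ small, and iterating over uniform-size intervals. The extra care you take about the a priori finiteness of the norm before absorption and about gluing the restriction norms is a welcome refinement of details the paper passes over, not a different method.
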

\begin{proof}
We already know that
	\begin{equation}\label{staop4}
		v(t)=W(t)v_{0}
	\end{equation}
	is the unique solution of \eqref{Linestab1}. On the other hand, using the decomposition \eqref{locdamp} and Duhamel's formula we see that it can also be written as
	\begin{equation}\label{staop5}
		v(t)=S_{\mu}(t)v_{0}-\int_{0}^{t}S_{\mu}(t-s)(N_{1}v+Rv)(s)ds
	\end{equation}
	Substituting \eqref{staop4} in \eqref{staop5}, we obtain
	$$W(t)v_{0}=S_{\mu}v_{0}-\int_{0}^{t}S_{\mu}(t-s)(N_{1}W(\cdot)v_{0}+RW(\cdot)v_{0})(s)ds.$$
	Let $0<T_0<1$ be a small real number to be chosen later. Using Proposition \ref{locdamp6} and similar computations as in \eqref{Lwposed2}, we deduce
	$$
	{\begin{split}
			\left\|W(t)v_{0}\right\|_{Z_{T_{0}}^{b^{\ast}}}
			&\leq
			\left\|S_{\mu}(t)v_{0}\right\|_{Z_{T_{0}}^{b^{\ast}}}
			+
			\left\|\int_{0}^{t}S_{\mu}(t-s)(N_{1}W(\cdot)v_{0}+RW(\cdot)v_{0})(s)ds\right\|_{Z_{T_{0}}^{b^{\ast}}}\\
			&\leq
			C\|v_{0}\|_{L_{0}^{2}(\mathbb{T})}
			+
			CT_0^{\epsilon} \left\|W(t)v_{0}\right\|_{Z_{T_{0}}^{b^{\ast}}}
			+
			CT_{0}^{\epsilon} \left\|W(t)v_{0}\right\|_{Z_{T_{0}}^{b^{\ast}}}\\
			&\leq 	C\|v_{0}\|_{L_{0}^{2}(\mathbb{T})}+2CT_0^{\epsilon} \left\|W(t)v_{0}\right\|_{Z_{T_{0}}^{b^{\ast}}}.
	\end{split}}
$$
By	choosing $T_0$ satisfying $\displaystyle{2CT_{0}^{\epsilon}<1},$ we obtain
	$$
	{
		\begin{split}
			\left\|W(t)v_{0}\right\|_{Z_{T_{0}}^{b^{\ast}}}
			&\lesssim\|v_{0}\|_{L_{0}^{2}(\mathbb{T})}.
	\end{split}}
	$$
Since $T_{0}$ is an absolute constant, we can iterate the above argument on uniform-size intervals to obtain the desired.
\end{proof}

Next, we establish an exponential stabilization result for system \eqref{Linestab1}.
\begin{prop}\label{staop7}
	There exist $\gamma>0$ and $M>0$ such that for any $v_{0}\in L_{0}^{2}(\mathbb{T})$ the unique solution $v(t)=W(t)v_{0}$ of \eqref{Linestab1} satisfies
	\begin{equation}\label{staop12}	
		\left\|W(t)v_{0}\right\|_{L_{0}^{2}(\mathbb{T})}
		\leq M e^{-\gamma t} \left\|v_{0}\right\|_{L_{0}^{2}(\mathbb{T})},\;\;\;\text{for all}\;t\geq 0.	
	\end{equation}
\end{prop}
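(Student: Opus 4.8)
The plan is to deduce the exponential decay \eqref{staop12} from an observability inequality for the damping term, combined with the contraction property of $W(t)$ and the semigroup law. Since $\{W(t)\}_{t\ge 0}$ is a semigroup of contractions whose solutions obey the energy identity \eqref{energystimate}, namely $\frac{1}{2}\frac{d}{dt}\|W(t)v_0\|^2_{L^2_0(\mathbb{T})} = -\|D^{\frac{\delta}{2}}G W(t)v_0\|^2_{L^2_0(\mathbb{T})}$, the heart of the matter is to prove that there exist $T>0$ and $C>0$ such that
$$\|v_0\|^2_{L^2_0(\mathbb{T})}\le C\int_0^T \|D^{\frac{\delta}{2}}G W(t)v_0\|^2_{L^2_0(\mathbb{T})}\,dt \qquad \text{for all } v_0\in L^2_0(\mathbb{T}).$$
Granting this, integrating \eqref{energystimate} on $[0,T]$ gives $\|W(T)v_0\|^2 \le (1-2/C)\|v_0\|^2$ with $\kappa^2:=1-2/C\in[0,1)$; iterating via $W(nT)=W(T)^n$ and interpolating the intermediate times by contractivity yields \eqref{staop12} with $\gamma=-T^{-1}\ln\kappa$ and $M=\kappa^{-1}$.

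I would establish the observability inequality by a compactness--uniqueness argument. Assume it fails: then there is a sequence $(v_0^n)\subset L^2_0(\mathbb{T})$ with $\|v_0^n\|_{L^2_0(\mathbb{T})}=1$ and $\int_0^T\|D^{\frac{\delta}{2}}G v^n\|^2_{L^2_0(\mathbb{T})}\,dt\to 0$, where $v^n:=W(\cdot)v_0^n$. By Lemma \ref{staop2} the sequence is bounded in $Z^{b^*}_T$, and since $b^*>\frac12$ Proposition \ref{prop2}(iii) places it in $L^2([0,T];H^{\frac{\delta}{2}}_0(\mathbb{T}))\cap C([0,T];L^2_0(\mathbb{T}))$ with uniform bounds; an Aubin--Lions/Rellich type argument then gives a subsequence converging strongly in $L^2([0,T];L^2_0(\mathbb{T}))$ to some $v$. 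The energy identity forces $\|v^n(t)\|_{L^2_0(\mathbb{T})}^2\to 1$ uniformly on $[0,T]$ (it decreases from $1$ to $1-2\int_0^T\|D^{\frac{\delta}{2}}Gv^n\|^2\,dt\to 1$), whence $\int_0^T\|v(t)\|^2_{L^2_0(\mathbb{T})}\,dt=T>0$ and in particular $v\not\equiv 0$. Because $G$ is bounded on $L^2_0(\mathbb{T})$ while $D^{\frac{\delta}{2}}$ is a closed operator with $Gv^n\to Gv$ and $D^{\frac{\delta}{2}}Gv^n\to 0$ in $L^2([0,T];L^2_0(\mathbb{T}))$, closedness gives $D^{\frac{\delta}{2}}Gv=0$, and since $|k|^{\delta/2}\neq0$ for $k\neq0$ this yields $Gv=0$ a.e. Finally, writing $\langle GD^{\delta} G v^n,\phi\rangle = \langle D^{\frac{\delta}{2}}Gv^n, D^{\frac{\delta}{2}}G\phi\rangle$ and using Cauchy--Schwarz shows $GD^{\delta} Gv^n\to 0$ in $\mathscr{P}'$, so passing to the limit in $\partial_t v^n+\beta D^{2m}\partial_x v^n+\alpha\mathcal{H}^{2r}\partial_x v^n+2\mu\partial_x v^n=-GD^{\delta}Gv^n$ shows that $v$ solves the undamped linear equation in \eqref{UCPLgBe1}.

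It then remains to reach a contradiction, and this is where the unique continuation Proposition \ref{UCPLgBe} enters. From $Gv=0$ and the definition $Gv=g\,(v-\langle v,g\rangle)$ we get $v(x,t)=\langle v(\cdot,t),g\rangle=:c(t)$, constant in $x$, for a.e.\ $x$ in the interval $\omega=\{g>0\}$ and a.e.\ $t\in(0,T)$; consequently $\partial_x v=0$ a.e.\ on $\omega\times(0,T)$. Since $v$ solves the undamped equation, so does $w:=\partial_x v\in C([0,T];H^{-1}_0(\mathbb{T}))$, and $w$ vanishes on $\omega\times(0,T)$; Proposition \ref{UCPLgBe} therefore forces $w\equiv 0$, i.e.\ $v(x,t)=c(t)$ for all $x$. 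As $v(\cdot,t)\in L^2_0(\mathbb{T})$ has zero mean, $c(t)\equiv 0$ and hence $v\equiv 0$, contradicting $v\not\equiv 0$. This establishes the observability inequality and completes the proof. The main obstacle I anticipate is the compactness step together with the careful passage to the limit: one must extract a limit in a topology strong enough to identify $Gv=0$ and to recover that $v$ solves the undamped equation, and the decisive structural trick is to differentiate in $x$, so that the constant $c(t)$ produced by $Gv=0$ is removed and Proposition \ref{UCPLgBe} becomes applicable.
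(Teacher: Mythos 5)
Your proposal is correct and follows essentially the same compactness--uniqueness strategy as the paper: reduce to the observability inequality via the energy identity, argue by contradiction using Lemma \ref{staop2} and Aubin--Lions, deduce $Gv=0$, differentiate in $x$, and invoke Proposition \ref{UCPLgBe}. The only (harmless) variation is at the final contradiction: the paper shows $\{u_n(0)\}$ is Cauchy in $L^2_0(\mathbb{T})$ to conclude $\|u(0)\|=1$, whereas you use the uniform lower bound $\|v^n(t)\|^2>1-2/n$ together with strong $L^2_tL^2_x$ convergence to get $v\not\equiv 0$ directly.
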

\begin{proof}
In view of \eqref{energystimate}, we obtain
	\begin{equation}\label{staop9}
		\frac{1}{2}\left\|v(\cdot,t) \right\|^{2}_{L_{0}^{2}(\mathbb{T})}
		=
			\frac{1}{2}\left\|v_{0} \right\|^{2}_{L_{0}^{2}(\mathbb{T})}
		-
		\int_{0}^{t} \left\|D^{\frac{\delta}{2}}Gv(\cdot,t') \right\|^{2}_{L_{0}^{2}(\mathbb{T})} dt', \quad t\geq0.
	\end{equation}
	Hence, to prove the proposition it is sufficient to establish the following \textit{linear observability inequality}: there exist $T>0$ and  $C>2$ such that, for any $v_{0}\in L_{0}^{2}(\mathbb{T}),$ 
	\begin{equation}\label{staop10}
		\left\|v_{0} \right\|^{2}_{L_{0}^{2}(\mathbb{T})}
		\leq C
		\int_{0}^{T} \left\|D^{\frac{\delta}{2}}Gv(\cdot,t') \right\|^{2}_{L_{0}^{2}(\mathbb{T})} dt'.
	\end{equation}
	Indeed, if \eqref{staop10} holds then from \eqref{staop9}, we have
	$$\left\|v(\cdot,T) \right\|^{2}_{L_{0}^{2}(\mathbb{T})}
	\leq
	\rho \left\|v_{0} \right\|^{2}_{L_{0}^{2}(\mathbb{T})},
	$$
	for some $0<\rho<1$. We can repeat this estimate on successive intervals $[(l-1)T,lT]$ to get
	$$\left\|v(\cdot,lT) \right\|^{2}_{L_{0}^{2}(\mathbb{T})}
	\leq
	\rho^{l} \left\|v_{0} \right\|^{2}_{L_{0}^{2}(\mathbb{T})},\;\;\;l=2,3,\ldots,
	$$
which in turn, from the semigroup properties, implies \eqref{staop12}.
	
	Now we prove \eqref{staop10}. 	We argue by contradiction assuming that \eqref{staop10} does not hold. Then for any $n\in \mathbb{N}^{\ast},$ with $n>1$, we can find a sequence $u_{n}=W(t)(u_{n}(0))$ of solutions of \eqref{Linestab1} such that (after normalization)
	\begin{equation*}
		u_{n}\in Z_{T}^{b^{\ast}}\cap C([0,T];L_{0}^{2}(\mathbb{T})),
	\end{equation*}
	\begin{equation*}
		\left\|u_{n}(0) \right\|_{L_{0}^{2}(\mathbb{T})}=1,
	\end{equation*}
	\begin{equation}\label{staop16}
		\int_{0}^{T} \left\|D^{\frac{\delta}{2}}Gu_{n}(\cdot,t') \right\|^{2}_{L_{0}^{2}(\mathbb{T})} dt'<\frac{1}{n}.
	\end{equation}
with $b^*$ as in Lemma \ref{staop2}. 

Next we introduce the negative number   $\displaystyle{\gamma^{\ast}=\frac{\delta}{2}-(2m+1)}$. Since the operator $\partial_{x}\mathcal{A}$ is bounded from $H_{0}^{\frac{\delta}{2}}(\mathbb{T})$ into $H^{\gamma^*}_0(\mathbb{T})$, from  Proposition \ref{prop2}-(iii) and Lemma \ref{staop2},
	\begin{equation}\label{staop18}
		\begin{split}
				\left\|\partial_{x}\mathcal{A} u_{n} 
			\right\|_{L^{2}(0,T;H_{0}^{\gamma^{\ast}}(\mathbb{T} ))}
			\lesssim
			\left\| u_{n}\right\|_{L^{2} (0,T;H_{0}^{\frac{\delta}{2}}(\mathbb{T} ))}
			\lesssim \left\|u_{n} \right\|_{Z_{T}^{b^{\ast}}}
			\lesssim 	\left\|u_{n}(0) \right\|_{L_{0}^{2}(\mathbb{T})}\lesssim 1.
		\end{split}
	\end{equation}
 Also, because $G$ is bounded on $H_{0}^{s}(\mathbb{T})$ for any $s\in \mathbb{R}$ (see \cite[Lemma 2.20]{micu}), we infer 
	\begin{equation*}
		{
			\begin{split}
				\left\|GD^{\delta}Gu_{n} \right\|_{H_{0}^{\gamma^*}(\mathbb{T})}
				 \lesssim
				\left\|D^{\delta}Gu_{n} \right\|_{H_{0}^{\gamma^*}(\mathbb{T})}
				 \lesssim
				\left\|Gu_{n} \right\|_{H_{0}^{\gamma^{\ast}+\delta}(\mathbb{T})}
				 \lesssim
				\left\|Gu_{n} \right\|_{H_{0}^{\frac{\delta}{2}}(\mathbb{T})}.  
		\end{split}}
	\end{equation*}
	From this and the energy estimate \eqref{staop9}, we have
	\begin{equation}\label{staop24}
		{
			\begin{split}
				\left\|GD^{\delta}Gu_{n} \right\|^{2}_{L^{2}(0,T;H_{0}^{\gamma^{\ast}}(\mathbb{T}))}
						& \lesssim 
				\int_{0}^{T}\left\|Gu_{n} (t')\right\|^{2}_{H_{0}^{\frac{\delta}{2}}(\mathbb{T})}dt'\\
				&=\frac{1}{2}
			 \left( \left\|u_{n}(0) \right\|^{2}_{L_{0}^{2}(\mathbb{T})} - \left\|u_{n}(T) \right\|^{2}_{L_{0}^{2}(\mathbb{T})} \right)\\
				&\lesssim
			 \left\|u_{n}(0) \right\|^{2}_{L_{0}^{2}(\mathbb{T})} \\
				&\lesssim 1.
		\end{split}}
	\end{equation}
 Using the equation in \eqref{Linestab1}, \eqref{staop18} and \eqref{staop24}, we infer that
	\begin{equation}\label{staop25}
		{
			\begin{split}
				\left\|\partial_{t}u_{n} \right\|_{L^{2}(0,T;H_{0}^{\gamma^{\ast}}(\mathbb{T}))}
				&\leq
				\left\|\partial_{x}\mathcal{A}u_{n} \right\|_{L^{2}(0,T;H_{0}^{\gamma^{\ast}}(\mathbb{T}))}
				+
				\left\|GD^{\delta}Gu_{n} \right\|_{L^{2}(0,T;H_{0}^{\gamma^{\ast}}(\mathbb{T}))}
				\lesssim 1.
		\end{split}}
	\end{equation}

	Uniform bounds \eqref{staop18} and \eqref{staop25} allow us to apply the Aubin-Lions lemma (see, for instance, \cite[Section 7.3]{Roubicek}). Therefore, one can extract a subsequence (still denoted by $u_{n}$) with the following properties:
	\begin{equation}\label{staop26}
		u_{n}\to u \;\;\text{in}\;L^{2}(0,T;H_{0}^{\gamma}(\mathbb{T})),\;\text{as}\;n\to \infty, \;\text{for all}\; \gamma\;\text{with}\;\frac{\delta}{2}>\gamma\geq\gamma^{\ast},
	\end{equation}
	\begin{equation}\label{staop27}
		u_{n}\rightharpoonup u \;\;\text{in}\;L^{2}(0,T;H_{0}^{\frac{\delta}{2}}(\mathbb{T})),\;\text{as}\;n\to \infty,\qquad\qquad\qquad \qquad\qquad \quad
	\end{equation}
	for some $u\in L^{2}(0,T;H_{0}^{\frac{\delta}{2}}(\mathbb{T})).$
	On the other hand, the facts that $u_n\in C([0,T];L_{0}^{2}(\mathbb{T}))$ is the unique solution of \eqref{Linestab1} and that $W(t)$ is a $C_{0}$-semigroup of contractions on $L_{0}^{2}(\mathbb{T})$ imply
	\begin{equation}\label{staop28}
		\left\| u_{n}\right\|_{L^{\infty} ([0,T];L_{0}^{2}(\mathbb{T} ))}
		= 
		\left\|W(t)(u_{n}(0)) \right\|_{L^{\infty} ([0,T];L_{0}^{2}(\mathbb{T} ))}
		\lesssim
		\left\|u_{n}(0) \right\|_{L_{0}^{2}(\mathbb{T})}
		\lesssim1.
	\end{equation}
	 Hence, applying the Banach-Alaoglu-Bourbaki theorem (see, for instance, \cite[Theorem 3.16]{Brezis}) we can extract a subsequence (still denoted by $u_{n}$) satisfying in addition to \eqref{staop26} and \eqref{staop27} the following property:
	\begin{equation}\label{staop29}
		u_{n} \stackrel{\ast}{ \rightharpoonup} u \;\;\text{in}\;L^{\infty}(0,T;L_{0}^{2}(\mathbb{T})),\;\text{as}\;n\to \infty.
	\end{equation}
	
	Next, we shall prove that $\left\{u_{n}(0)\right\}_{n>1}$ is a Cauchy sequence in $L_{0}^{2}(\mathbb{T}).$ In fact, first note if $w$ is a solution of \eqref{Linestab1} then
multiplying the energy estimate \eqref{energystimate} by $(T-t)$ and integrating on the interval $[0,T]$ results
	$$
	\left\|w(0) \right\|^{2}_{L_{0}^{2}(\mathbb{T})}
	=\frac{1}{T} \int_{0}^{T}  \left\|w(\cdot,t) \right\|^{2}_{L_{0}^{2}(\mathbb{T})} dt
	+
	\frac{2}{T} \int_{0}^{T}(T-t) \left\|D^{\frac{\delta}{2}}Gw(\cdot,t) \right\|^{2}_{L_{0}^{2}(\mathbb{T})}dt.
	$$
	Using this last identity to the difference of two solutions $u_n-u_l$, in view of \eqref{staop26} and \eqref{staop16}, we get
	$$
	{
		\begin{split}
			\left\|u_{n}(0)-u_{l}(0) \right\|^{2}_{L_{0}^{2}(\mathbb{T})}
			&=\frac{1}{T} \int_{0}^{T}  \left\|(u_{n}-u_{l})(t) \right\|^{2}_{L_{0}^{2}(\mathbb{T})} dt
			+
			\frac{2}{T} \int_{0}^{T}(T-t) \left\|D^{\frac{\delta}{2}}G(u_{n}-u_{l})(t) \right\|^{2}_{L_{0}^{2}(\mathbb{T})}dt\\
			&\leq
			\frac{1}{T}  \left\|u_{n}-u_{l} \right\|^{2}_{L^{2}(0,T;L_{0}^{2}(\mathbb{T}))} 
			+
			2\int_{0}^{T} \left\|D^{\frac{\delta}{2}}G(u_{n}-u_{l})(t) \right\|^{2}_{L_{0}^{2}(\mathbb{T})}dt\\
			&<
			\frac{1}{T}  \left\|u_{n}-u_{l} \right\|^{2}_{L^{2}(0,T;L_{0}^{2}(\mathbb{T}))} 
			+
			4\left(\frac{1}{n}+\frac{1}{l}\right)
			\longrightarrow 0,\;\;\text{as}\;n,l \to \infty.
	\end{split}}
	$$
	Thus, $u_{n}(0)$ converges strongly to some $u_{0}$ in $L_{0}^{2}(\mathbb{T})$. From the continuous dependence it follows that the solution of equation \eqref{Linestab1} with initial data $u_{0}$, say,  $\tilde{u}$,  agrees with the limit of $u_{n}$ in the space $C([0,T];L_{0}^{2}(\mathbb{T})).$ But, from   \eqref{staop29} we have that $\tilde{u}\equiv u$ in $L^{\infty}(0,T;L_{0}^{2}(\mathbb{T})).$ Since $\tilde{u}\in C([0,T];L_{0}^{2}(\mathbb{T}))$ we conclude that $u\in C([0,T];L_{0}^{2}(\mathbb{T}))$ and $u(0)=u_{0}.$ In addition, in view of the weak convergence \eqref{staop27} and \eqref{staop16} we deduce
	$$
	\int_{0}^{T}\left\|D^{\frac{\delta}{2}}Gu(t) \right\|^{2}_{L_{0}^{2}(\mathbb{T})}dt\leq \liminf_{n\to\infty}\int_{0}^{T}\left\|D^{\frac{\delta}{2}}Gu_{n}(t) \right\|^{2}_{L_{0}^{2}(\mathbb{T})}dt=0,
	$$
	which gives that for almost every $t\in (0,T),$ $Gu(t)\in L^{1}(\mathbb{T})$ and 
	$\widehat{Gu}(k,t)=0,$ for all $k\in \mathbb{Z}^{\ast}.$
	Hence, $Gu(x,t)=0,$ a.e. $(x,t)\in \mathbb{T}\times (0,T).$ Using the definition of $G$ in $L_{0}^{2}(\mathbb{T}),$ we have that
	$$u(x,t)=\int_{\mathbb{T}}u(y,t)g(y)dy=:c(t), \;\;\text{a.e.}\;(x,t)\in \omega\times (0,T).$$
	Applying Cauchy-Schwarz's inequality, we note that \eqref{staop28} yields
	$$
	{
		\begin{split}
			\left\|c(t)\right\|_{L^{\infty}(0,T)}
			&\leq
			\sup\limits_{t\in (0,T)} \int_{\mathbb{T}} |u||g|dy
			& \leq
			\|g\|_{L_{0}^{2}(\mathbb{T})} \|u\|_{L^{\infty}(0,T; L_{0}^{2}(\mathbb{T}))}< \infty.
	\end{split}}
	$$
	Thus, the limit $u\in C([0,T];L_{0}^{2}(\mathbb{T}))$ satisfies
	\begin{equation}\label{Linestab2}
		\left \{
		\begin{array}{l l}
			\partial_{t}u+\beta D^{2m}\partial_{x}u+\alpha \mathcal{H}^{2r}\partial_{x}u+2\mu  \partial_{x}u=0,& x\in \mathbb{T},\; t>0,\\
			u(x,t)=c(t), &\text{for a.e.} \;(x,t)\in \omega \times (0,T).
		\end{array}
		\right.
	\end{equation}
	Derivating \eqref{Linestab2} with respect to  the spacial variable and setting $w:=\partial_{x}u \in C([0,T];H_{0}^{-1}(\mathbb{T})),$ we have that
	\begin{equation*}
		\left \{
		\begin{array}{l l}
			\partial_{t}w+\beta D^{2m}\partial_{x}w+\alpha \mathcal{H}^{2r}\partial_{x}w+2\mu  \partial_{x}w=0,& x\in \mathbb{T},\; t>0,\\
			w(x,t)=0, &\text{for a.e.} \;(x,t)\in \omega \times (0,T).
		\end{array}
		\right.
	\end{equation*}
	Finally, Proposition \ref{UCPLgBe} imply that $\partial_{x}u=w\equiv 0$ a.e. $\mathbb{T}\times (0,T).$
	Hence, for a.e. $t\in (0,T),$ 
	$$u(\cdot,t)=c_{1}(t),\;\;\text{for a.e.}\;x\in \mathbb{T}.
	$$
	Since $[u]=0,$ we conclude that $c_{1}(t)=0,\;\text{a.e.}\; t\in(0,T). $ Therefore, $u\equiv 0$ for a.e. $(x,t)\in \mathbb{T} \times (0,T).$
	This contradicts the fact that 
	$$
	\left\|u(0) \right\|_{L_{0}^{2}(\mathbb{T})}=
	\lim_{n\to \infty}	\left\|u_{n}(0) \right\|_{L_{0}^{2}(\mathbb{T})}
	=1.
	$$
	This proves the proposition.
\end{proof}

\subsection{Local exponential stabilization}\label{LES1}
In this section we show the local exponential stabilization result for  \eqref{GB3} in $L_{p}^{2}(\mathbb{T})$ stated in Theorem  \ref{Sta1}.
The following extension of the bilinear estimate is needed.
\begin{lem}\label{LER}
	Let $b^{\ast}\in \mathbb{R}$ be given with $\displaystyle{\frac{1}{2}<b^{\ast}<\min\left\{\frac{2m}{2m+\delta},b\right\}},$ where $b$ satisfies \eqref{minb}.
 Assume that $2m>2-\delta.$  Then, for any $T>0$,
	$$	\left\| \int_{0}^{t}W(t-s) \partial_{x}(v^{2})(s) ds\right\|_{Z_{T}^{b^{\ast}}}
	\lesssim_{T}
	\left\|v\right\|^{2}_{Z_{T}^{b^{\ast}}}
	$$
\end{lem}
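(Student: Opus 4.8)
The plan is to reduce the estimate for the full closed-loop propagator $W(t)$ to the one already established for the dissipation-normalized propagator $S_{\mu}(t)$ in Theorem \ref{bilinear}, treating the operators $N_{1}$ and $R$ from the decomposition \eqref{locdamp} as perturbations. Set $z(t):=\int_{0}^{t}W(t-s)\partial_{x}(v^{2})(s)\,ds$, so that $z$ solves $\partial_{t}z=(\partial_{x}\mathcal{A}-GD^{\delta}G)z+\partial_{x}(v^{2})$ with $z(0)=0$. Since $GD^{\delta}G=\widetilde{D}^{\delta}+N_{1}+R$ and $S_{\mu}$ is generated by $\partial_{x}\mathcal{A}-\widetilde{D}^{\delta}$, Duhamel's formula against $S_{\mu}$ yields the fixed-point-type identity
$$
z(t)=\int_{0}^{t}S_{\mu}(t-s)\big[\partial_{x}(v^{2})(s)-(N_{1}+R)z(s)\big]\,ds.
$$

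First I would work on a short interval $[0,T_{0}]$ with $0<T_{0}<1$. Applying Theorem \ref{locdamp8} followed by the triangle inequality, then Theorem \ref{bilinear} on the bilinear term, Theorem \ref{stiN1} on the $N_{1}$ term, and Proposition \ref{stiR1} on the $R$ term, I obtain
$$
\|z\|_{Z_{T_{0}}^{b^{*}}}\lesssim T_{0}^{\epsilon}\|v\|_{Z_{T_{0}}^{b^{*}}}^{2}+CT_{0}^{\epsilon}\|z\|_{Z_{T_{0}}^{b^{*}}},
$$
for some $\epsilon>0$ and a constant $C$ independent of $v$ and $T_{0}$. Choosing $T_{0}$ so small that $CT_{0}^{\epsilon}<\tfrac{1}{2}$, the last term is absorbed into the left-hand side, giving $\|z\|_{Z_{T_{0}}^{b^{*}}}\lesssim\|v\|_{Z_{T_{0}}^{b^{*}}}^{2}$ on the fixed interval $[0,T_{0}]$; here $T_{0}$ is an absolute constant determined only by $C$ and $\epsilon$.

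To reach an arbitrary $T>0$ I would iterate over the uniform partition $\{[jT_{0},(j+1)T_{0}]\}$, exactly as in the proof of Lemma \ref{staop2}. On the shifted interval $[T_{0},2T_{0}]$ the semigroup property splits $z$ as $W(t-T_{0})z(T_{0})$ plus a Duhamel integral over $[T_{0},2T_{0}]$; the first piece is controlled by Lemma \ref{staop2} together with the embedding $Z^{b^{*}}\hookrightarrow C([0,T];L_{0}^{2}(\mathbb{T}))$ of Proposition \ref{prop2}-(iii) (valid since $b^{*}>\tfrac{1}{2}$), which gives $\|W(\cdot-T_{0})z(T_{0})\|_{Z^{b^{*}}([T_{0},2T_{0}])}\lesssim\|z(T_{0})\|_{L_{0}^{2}(\mathbb{T})}\lesssim\|v\|_{Z_{T_{0}}^{b^{*}}}^{2}$, while the second piece is handled exactly as on $[0,T_{0}]$. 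Since reaching $T$ requires only $\lceil T/T_{0}\rceil$ steps and each contributes a factor bounded by $\|v\|_{Z_{T}^{b^{*}}}^{2}$, summing the finitely many pieces yields the claimed bound with an implicit constant depending on $T$.

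The main obstacle I anticipate is the self-coupling of $z$ through $(N_{1}+R)z$ in the Duhamel representation: unlike the purely forced estimate, this term forces the small-time absorption argument, so the clean bound holds only on a fixed short interval and the passage to general $T$ must be carried out by the iteration above rather than in one step. The lower-order, derivative-gaining nature of $N_{1}$ and $R$ in the dissipation-normalized spaces, quantified in Theorem \ref{stiN1} and Proposition \ref{stiR1}, is precisely what makes this absorption possible.
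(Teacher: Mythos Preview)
Your approach is correct and is essentially the one the paper has in mind: the paper omits the proof, pointing to Lemma~5 in \cite{Flores OH and Smith} and \cite[Lemma~4.4]{14}, and your argument mirrors exactly the structure of the proof of Lemma~\ref{staop2} in the present paper---rewrite the $W$-Duhamel term via $S_{\mu}$-Duhamel, apply Theorem~\ref{locdamp8} together with Theorem~\ref{bilinear}, Theorem~\ref{stiN1}, and Proposition~\ref{stiR1} on a short interval, absorb the $(N_{1}+R)z$ contribution using the factor $T_{0}^{\epsilon}$, and then iterate over uniform subintervals using the semigroup property and the already-established Lemma~\ref{staop2} for the homogeneous piece. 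This is precisely the intended route.
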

\begin{proof}
The proof is similar to that of Lemma 5 in \cite{Flores OH and Smith}	(see also \cite[Lemma 4.4]{14}), so we omit the details.
\end{proof}

Theorem  \ref{Sta1} is a direct consequence of the following result.

\begin{thm}\label{LER2}
Assume $\max\{0, 2-2m\}<\delta\leq1$ (and therefore $\delta<2m$).  Then there exist $\rho>0$ and $\lambda'>0$ such that for any $v_{0}\in L^{2}_{0}(\mathbb{T})$ with $\|v_{0}\|_{L_{0}^{2}(\mathbb{T})}<\rho$ the unique solution $v\in C([0,+\infty);L_{0}^{2}(\mathbb{T}))$ of system \eqref{GB6} satisfies
	$$\|v(\cdot, t)\|_{L^{2}_{0}(\mathbb{T})}\leq Me^{-\lambda' t}\|v_{0}\|_{L^{2}_{0}(\mathbb{T})},$$	
for all $t\geq0$ and some positive constant $M$.
\end{thm}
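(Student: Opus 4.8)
The plan is to run a now-standard decay-by-iteration argument, combining the exponential decay of the perturbed linear semigroup $W(t)$ from Proposition \ref{staop7} with the bilinear control of the Duhamel term from Lemma \ref{LER}. The starting point is to write the solution of \eqref{GB6} through the variation-of-parameters formula associated with $\{W(t)\}_{t\geq0}$, namely
$$v(t)=W(t)v_{0}-\int_{0}^{t}W(t-s)\partial_{x}(v^{2})(s)\,ds,$$
which absorbs the localized damping $GD^{\delta}G$ into the free evolution, leaving only the genuinely nonlinear term $\partial_{x}(v^{2})$ to be estimated. All estimates will be carried out in the space $Z_{T}^{b^{\ast}}$ with $b^{\ast}$ as in Lemma \ref{staop2}.

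First I would fix the time horizon. Proposition \ref{staop7} gives $\|W(t)v_{0}\|_{L^{2}_{0}(\mathbb{T})}\leq Me^{-\gamma t}\|v_{0}\|_{L^{2}_{0}(\mathbb{T})}$, so I choose $T>0$ large enough that $\theta_{0}:=Me^{-\gamma T}<1$. With $T$ frozen, the constants appearing in Lemma \ref{staop2} and Lemma \ref{LER} become fixed absolute constants. The next step is to derive, for data with $\|v_{0}\|_{L^{2}_{0}(\mathbb{T})}<\rho$ and $\rho$ small, the a priori bound $\|v\|_{Z^{b^{\ast}}_{T}}\lesssim_{T}\|v_{0}\|_{L^{2}_{0}(\mathbb{T})}$ on the whole interval $[0,T]$. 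Taking the $Z^{b^{\ast}}_{T}$ norm of the integral equation, Lemma \ref{staop2} controls $\|W(t)v_{0}\|_{Z^{b^{\ast}}_{T}}$ by $\|v_{0}\|_{L^{2}_{0}(\mathbb{T})}$ and Lemma \ref{LER} bounds the Duhamel term by $\|v\|^{2}_{Z^{b^{\ast}}_{T}}$, producing a quadratic inequality $\|v\|_{Z^{b^{\ast}}_{T}}\leq C_{T}\|v_{0}\|_{L^{2}_{0}(\mathbb{T})}+C_{T}\|v\|^{2}_{Z^{b^{\ast}}_{T}}$. A continuity/bootstrap argument, using the continuity in time of $v$ and the local theory of Theorem \ref{Gwp} (gluing the local $Z^{b^{\ast}}$ pieces over uniform-size steps to cover $[0,T]$), then yields $\|v\|_{Z^{b^{\ast}}_{T}}\leq 2C_{T}\|v_{0}\|_{L^{2}_{0}(\mathbb{T})}$ once $\rho$ is small.

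With this bound available, I return to the integral formula at time $T$ and estimate in $L^{2}_{0}(\mathbb{T})$. The embedding $Z^{b^{\ast}}_{T}\hookrightarrow C([0,T];L^{2}_{0}(\mathbb{T}))$ of Proposition \ref{prop2}(iii) (valid since $b^{\ast}>\tfrac12$) upgrades Lemma \ref{LER} to a bound of the Duhamel term in $C([0,T];L^{2}_{0}(\mathbb{T}))$, so that
$$\|v(T)\|_{L^{2}_{0}(\mathbb{T})}\leq\theta_{0}\|v_{0}\|_{L^{2}_{0}(\mathbb{T})}+C''_{T}\|v_{0}\|^{2}_{L^{2}_{0}(\mathbb{T})}.$$
Shrinking $\rho$ so that $C''_{T}\rho<\tfrac12(1-\theta_{0})$ gives $\|v(T)\|_{L^{2}_{0}(\mathbb{T})}\leq\theta\|v_{0}\|_{L^{2}_{0}(\mathbb{T})}$ with $\theta:=\tfrac12(1+\theta_{0})<1$. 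Finally I would iterate: the energy identity \eqref{energystimate} shows that $t\mapsto\|v(t)\|_{L^{2}_{0}(\mathbb{T})}$ is non-increasing, hence $\|v(T)\|_{L^{2}_{0}(\mathbb{T})}<\rho$ and the same estimate applies on $[T,2T],[2T,3T],\dots$, giving $\|v(nT)\|_{L^{2}_{0}(\mathbb{T})}\leq\theta^{n}\|v_{0}\|_{L^{2}_{0}(\mathbb{T})}$; monotonicity covers the intermediate times and yields $\|v(t)\|_{L^{2}_{0}(\mathbb{T})}\leq Me^{-\lambda' t}\|v_{0}\|_{L^{2}_{0}(\mathbb{T})}$ with $\lambda':=-T^{-1}\ln\theta>0$.

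The main obstacle I anticipate is the a priori $Z^{b^{\ast}}_{T}$ bound on the \emph{fixed} (and necessarily large) interval $[0,T]$, as opposed to the short, data-dependent interval coming directly from the contraction in Theorem \ref{Gwp}: because $T$ must be taken large to overcome the linear decay, the bilinear constant $C_{T}$ is no longer small, and the quadratic estimate closes only through the smallness of the data. The entire scheme therefore rests on the order of quantifiers — first fixing $T$ from Proposition \ref{staop7}, and only afterwards selecting $\rho$ — so that the quadratic nonlinear contribution is dominated by the linear exponential gain.
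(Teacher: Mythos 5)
Your proposal is correct and follows essentially the same route as the paper: Duhamel's formula relative to the damped semigroup $W(t)$, the $Z^{b^\ast}_T$ bounds of Lemma \ref{staop2} and Lemma \ref{LER}, fixing $T$ from Proposition \ref{staop7} before shrinking $\rho$, and iterating over intervals of length $T$. The only cosmetic difference is that the paper closes the a priori bound $\|v\|_{Z^{b^\ast}_T}\lesssim\|v_0\|_{L^2_0(\mathbb{T})}$ by running a contraction for $\Gamma$ in the ball $B_M(W(\cdot)v_0)$ rather than by your bootstrap on the quadratic inequality; both devices yield the same estimate under the same smallness of the data.
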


\begin{proof}
From Proposition \ref{staop7}	we can fix some positive $T$ large enough  and some $\lambda'$ small enough with $0<\lambda'<\gamma$ such that
	\begin{equation}\label{LER4}
		\left\|W(T)v_{0}\right\|_{L_{0}^{2}(\mathbb{T})}
		\leq
		\frac{1}{2}e^{-\lambda' T}	\left\|v_{0}\right\|_{L_{0}^{2}(\mathbb{T})}.
	\end{equation}

As in the proof of Theorem \ref{Gwp} the idea is to show if $\|v_{0}\|_{L_{0}^{2}(\mathbb{T})}<\rho$ then the map 
	$$\Gamma(v)=W(t)v_{0}-\int_{0}^{t}W(t-s)(\partial_{x}(v^{2}))(s)ds
	$$ 
is a contraction in the ball 
	$$B_{M}(W(\cdot)v_{0}):=\left\{ v\in Z_{T}^{b^{\ast}}: \;\|v-W(t)v_{0}\|_{Z_{T}^{b^{\ast}}}\leq M\right\},$$
	for some suitable $M$ and $b^*$ as in Lemma \ref{staop2}.
	This will be achieved provided that $ \rho$ and $M$ are small enough. Furthermore, to ensure the exponential stability with the claimed decay rate, the numbers $M$ and $\rho$ will be chosen in such a way that
	\begin{equation}\label{LER12}
		{
			\begin{split}
				\left\|v(T) \right\|_{L_{0}^{2}(\mathbb{T})}
				&\leq
				e^{-\lambda' T}	\left\|v_{0}\right\|_{L_{0}^{2}(\mathbb{T})}.
		\end{split}}
	\end{equation}
	
So let us start by fixing $M>0$ such that $\displaystyle{M<\frac{1}{2}\|W(t)v_{0}\|_{Z_{T}^{b^{\ast}}} }$. It is easy to check that for any $v\in B_{M}(S_{\mu}(\cdot)v_{0})$ we have $\displaystyle{\|v\|_{Z_{T}^{b^{\ast}}}\sim \|W(t)v_{0}\|_{Z_{T}^{b^{\ast}}}}$ and (from Lemma \eqref{staop2}) there exists $C_{1}= C_{1}(b^{\ast},T)>0$ satisfying
	\begin{equation}\label{LER5}
		\|v\|_{Z_{T}^{b^{\ast}}}\leq 2 \|W(t)v_{0}\|_{Z_{T}^{b^{\ast}}}\leq 2C_{1}\|v_{0}\|_{L^{2}_{0}(\mathbb{T})}.
	\end{equation}
	Applying Lemma \ref{LER} and \eqref{LER5}, we get a positive constants $C_{2}= C_{2}(T)$ such that
	\begin{equation*}
		{
			\begin{split}
				\left\|\Gamma(v)-W(t)v_{0} \right\|_{Z_{T}^{b^{\ast}}}
				\leq
				C_{2}\left\|v \right\|^{2}_{Z_{T}^{b^{\ast}}}
				\leq
				4C_{2}C^{2}_{1}\left\|v_{0} \right\|^{2}_{L_{0}^{2}(\mathbb{T})}
				<
				4C_{2}C^{2}_{1}\rho^{2}  			
		\end{split}}
	\end{equation*}
and,  for any $v_{1},v_{2}\in B_{M}(W(\cdot)v_{0})$,
	\begin{equation*}
	{
		\begin{split}
			\left\|\Gamma(v_{1})-\Gamma(v_{2}) \right\|_{Z_{T}^{b^{\ast}}}
			&\leq
			C_{2}\left\|v_{1}-v_{2} \right\|_{Z_{T}^{b^{\ast}}}
			\left\|v_{1}+v_{2} \right\|_{Z_{T}^{b^{\ast}}}\\
			&\leq
			C_{2}\left(\left\|v_{1} \right\|_{Z_{T}^{b^{\ast}}}
			+	\left\|v_{2} \right\|_{Z_{T}^{b^{\ast}}}\right)
			\left\|v_{1}-v_{2} \right\|_{Z_{T}^{b^{\ast}}} \\
			&\leq 
			4C_{2}C_{1}\left\|v_{0} \right\|_{L_{0}^{2}(\mathbb{T})}
			\left\|v_{1}-v_{2} \right\|_{Z_{T}^{b^{\ast}}}\\ 	
			&<
			4C_{2}C_{1}\rho
			\left\|v_{1}-v_{2} \right\|_{Z_{T}^{b^{\ast}}}.
	\end{split}}
\end{equation*}
By choosing $\rho>0$ small enough such that $4C_2C_1^2\rho^2\leq M$, which also gives $4C_2C_1\rho<1/2$, we deduce that $\Gamma$ is a contraction in $B_{M}(W(\cdot)v_{0})\subset Z_{T}^{b^{\ast}}.$ Proposition \ref{prop2} implies that its unique fixed point, say,
	$v$, belongs to $C([0,T];L_{0}^{2}(\mathbb{T}))$. Finally, from \eqref{LER4} and Lemma \ref{LER} we infer that $v$
	fulfills
	\begin{equation*}
		{
			\begin{split}
				\left\|v(T) \right\|_{L_{0}^{2}(\mathbb{T})}
				&\leq
				\frac{1}{2}e^{-\lambda' T}	\left\|v_{0}\right\|_{L_{0}^{2}(\mathbb{T})}
				+
				\left\|\int_{0}^{t}W(t-s)(\partial_{x}(v^{2}))(s)ds \right\|_{C(0,T;L_{0}^{2}(\mathbb{T}))}
				\\
				&\leq
				\frac{1}{2}e^{-\lambda' T}	\left\|v_{0}\right\|_{L_{0}^{2}(\mathbb{T})}
				+
				C_{3}\left\|\int_{0}^{t}W(t-s)(\partial_{x}(v^{2}))(s)ds \right\|_{Z_{T}^{b^{\ast}}}
				\\
				&\leq
				\frac{1}{2}e^{-\lambda' T}	\left\|v_{0}\right\|_{L_{0}^{2}(\mathbb{T})}
				+
				C_{3}C_{2}4C^{2}_{1}\left\|v_{0} \right\|^{2}_{L_{0}^{2}(\mathbb{T})}\\
				&<
				\frac{1}{2}e^{-\lambda' T}	\left\|v_{0}\right\|_{L_{0}^{2}(\mathbb{T})}
				+
				C_{3}C_{2}4C^{2}_{1} \rho\left\|v_{0} \right\|_{L_{0}^{2}(\mathbb{T})},
		\end{split}}
	\end{equation*}
	for some positive constant $C_{3}$ depending on $b^{\ast}$ and $T$ given by Proposition \ref{prop2}. Thus, if additionally we choose $\rho$ satisfying 
	$2 C_{3}C_{2}4C_{1}^{2}\rho<e^{-\lambda' T},$
	we obtain that \eqref{LER12} holds. Using induction we can show that 
	\begin{equation*}
		{
			\begin{split}
				\left\|v(nT) \right\|_{L_{0}^{2}(\mathbb{T})}
				&\leq
				e^{-\lambda' nT}	\left\|v_{0}\right\|_{L_{0}^{2}(\mathbb{T})},\;\;\text{for any}\;n\geq 0\;\text{and some fixed}\;T>0,
		\end{split}}
	\end{equation*}
which completes the proof of the theorem.
\end{proof}


\subsection*{Acknowledgment}
F.J.V.L is supported by FAPESP/Brazil  grant 2020/14226-4.
A.P. is partially supported by CNPq/Brazil grant 303762/2019-5 and FAPESP/Brazil grant
2019/02512-5.
The first author would like to thank Prof. Seungly Oh for many helpful discussions concerning \cite{Flores OH and Smith}.


\end{document}